\newtheorem{lemma}{Lemma}[section]
\newtheorem{theorem}[lemma]{Theorem}
\newtheorem{corollary}[lemma]{Corollary}
\newtheorem{proposition}[lemma]{Proposition}
\newtheorem{definition}[lemma]{Definition}
\newtheorem{example}[lemma]{Example}
\newtheorem{remark}[lemma]{Remark}
\begin{document}

\title{On negative eigenvalues of two-dimensional Schr\"odinger operators with singular potentials}

\author{Martin Karuhanga\footnote{Department of Mathematics, Mbarara University of Science and Technology, Uganda. E-mail: \ mkaruhanga@must.ac.ug}\; and Eugene Shargorodsky\footnote{Department of Mathematics, King's College London, WC2R 2LS, Strand, London, UK.  E-mail: \ eugene.shargorodsky@kcl.ac.uk}}

\date{}

\maketitle

\begin{abstract}
We present upper estimates for the number of negative eigenvalues of  two-dimensional
Schr\"odinger operators with potentials generated by Ahlfors regular measures of arbitrary fractional dimension
$\alpha \in (0, 2]$. The estimates are given in terms of integrals of the potential with a logarithmic weight and of its
$L\log L$ type Orlicz norms. In the case $\alpha =1$,
our results are stronger than the known ones about Schr\"odinger operators with potentials supported by Lipschitz curves.
\end{abstract}
\noindent
{\bf Keywords}: Negative eigenvalues; Schr\"odinger operators; Singular potentials.
\section{Introduction}
Given a non-negative function $V \in L^1_{\textrm{loc}}(\mathbb{R}^d)$, consider the Schr\"odinger operator on $L^2(\mathbb{R}^d)$
\begin{equation}\label{1}
H_V := -\Delta - V, \;\;\;\;\;\;\;\; V \geq 0,
\end{equation}where $\Delta := \sum^d_{k = 1}\frac{\partial^2}{\partial x^2_k}$. This operator is defined by its quadratic form
\begin{eqnarray*}
&& \mathcal{E}_{V, \mathbb{R}^d}[u] = \int_{\mathbb{R}^d}|\nabla u(x)|^2\,dx - \int_{\mathbb{R}^d}V(x)|u(x)|^2\,dx ,\\
&& \mathrm{Dom}(\mathcal{E}_{V, \mathbb{R}^d}) = \left\{u\in W^1_2(\mathbb{R}^d)\cap L^2(\mathbb{R}^d, V(x)dx)\right\}.
\end{eqnarray*}
 Denote by $N_-(\mathcal{E}_{V, \mathbb{R}^d})$ the number of negative eigenvalues of $H_V$ counted according to their multiplicity.
 An estimate for $N_-(\mathcal{E}_{V, \mathbb{R}^d})$ in the case $d \ge 3$ is given by the celebrated Cwikel-Lieb-Rozenblum inequality:
 \begin{equation}\label{CLR}
N_-(\mathcal{E}_{V, \mathbb{R}^d})\le C_d\int_{\mathbb{R}^d}V(x)^{d/2}\,dx
\end{equation}
(see, e.g., \cite{BE, BEL, Roz} and the references therein). If $V \in L^{d/2}(\mathbb{R}^d)$, then this estimate implies that
\begin{equation}\label{O}
N_-(\mathcal{E}_{\lambda V, \mathbb{R}^d}) = O\left(\lambda^{d/2}\right) \ \mbox{ as } \ \lambda \to +\infty .
\end{equation}
The estimate is optimal in the sense that \eqref{O} implies that $V \in L^{d/2}(\mathbb{R}^d)$ (see, e.g., \cite[(127)]{RSIV}).

It is well known that \eqref{CLR} does not hold for $d = 2$. In this case, the Schr\"odinger operator
has at least one negative eigenvalue for any nonzero $V \ge 0$, and no estimate of the type
$$
N_-(\mathcal{E}_{V, \mathbb{R}^2}) \le \mbox{const} + \int_{\mathbb{R}^2} V(x)  W(x)\, dx
$$
can hold, provided the weight function $W$ is bounded in a neighborhood of at least
one point (see \cite{Grig}).
Most known upper estimates for $N_-(\mathcal{E}_{V, \mathbb{R}^2})$ involve terms of two types: integrals of $V$ with a logarithmic weight and
$L\log L$ type (or $L_p$, $p > 1$) Orlicz norms of $V$ (see \cite{Grig, LapSolo, MV, MV1, Eugene, Sol} and the references therein).
The following inequality is an example of such estimates
$$
N_-(\mathcal{E}_{V, \mathbb{R}^2}) \le 1 +  \mbox{const} \left(\int_{\mathbb{R}^2} V(x) \ln(1 + |x|)\, dx +
\|V\|_{\mathcal{B}, \mathbb{R}^2}\right)  , \ \ \
\forall V\ge 0 ,
$$
where $\|\cdot\|_{\mathcal{B}, \mathbb{R}^2}$ denotes the Orlicz norm \eqref{calB}, \eqref{Orlicz}. It was proved in \cite{Eugene}, where it
was also shown to be equivalent to the estimate conjectured in \cite{KMW} and weaker than the one obtained in \cite{Sol} (see \cite{Eugene}
for stronger estimates).
Ideally, one would like to have an optimal estimate of the type
\begin{equation}\label{ideal}
N_-(\mathcal{E}_{V, \mathbb{R}^2}) \le 1 + \Xi(V) ,
\end{equation}
where $\Xi$ is a combination of certain norms, $\Xi(\lambda V) = O(\lambda)$ as $\lambda \to +\infty$, and, most importantly,
\begin{equation}\label{O2}
N_-(\mathcal{E}_{\lambda V, \mathbb{R}^2}) = O\left(\lambda\right) \ \mbox{ as } \ \lambda \to +\infty
\end{equation}
implies that $\Xi(V) < \infty$. Unfortunately, even the strongest known estimates for $d = 2$ are not optimal in this sense
(see \cite{Eugene}). Finding an optimal estimate of type \eqref{ideal} seems to be a difficult problem.
The estimates for $N_-(\mathcal{E}_{V, \mathbb{R}^2})$ with $V$  supported by  Lipschitz curves  obtained in \cite{Kar, Eugene1}
show that \eqref{O2} may hold for singular potentials supported by lower-dimensional sets. We believe that a better
understanding of Schr\"odinger operators with such singular potentials (supported by fractal sets) might shed some additional light on the above problem.
This was the main motivation for the present work, although the results obtained here might be of some relevance to
the study of fractal antennae, apertures, screens, and transducers (see, e.g, \cite{CWH,CWHM,CWHMB, GSK,MW,WG}
and the references therein), especially in the case of impedance (Robin) boundary conditions (see \cite{HB,Ne1,Ne2,Ne3}).

In this paper, we  deal with the operator
\begin{equation}\label{2}
H_{V\mu} := -\Delta - V\mu\,,\,\,\,V \ge 0,
\end{equation}
on $L^2(\mathbb{R}^2)$, where $V\in L^1_{\textrm{loc}}(\mathbb{R}^2, \mu)$ and $\mu$ is a $\sigma$-finite positive Radon measure
on $\mathbb{R}^2$ that is Ahlfors regular of dimension $\alpha \in (0, 2]$ (see \eqref{Ahlfors}). We provide a unified treatment
of potentials locally integrable with respect to the Lebesgue measure on $\mathbb{R}^2$ ($\alpha = 2$),
potentials supported by curves ($\alpha = 1$), and potentials supported by sets of fractional dimension $\alpha \in (0, 1)\cup(1,2)$.
In the case $\alpha = 2$, we get the same estimate as in \cite[Theorem 6.1]{Eugene}, which is stronger than most other known
estimates that use isotropic norms. (Anisotropic norms like the ones used in \cite[Section 7]{Eugene} and \cite{LN}
are not available in the case $\alpha < 2$ and hence are not treated here.)
In the case  $\alpha = 1$, our Theorem \ref{mainthm} and Corollary \ref{maincor} are stronger than the results obtained
in \cite{Kar} and \cite{Eugene1}  as we are now able to cover Ahlfors regular curves rather than just Lipschitz ones.
In the case  $\alpha \in (0, 1)\cup(1,2)$, our results seem to be completely new.

The proof of our main result, Theorem \ref{mainthm},
follows the same blueprint as in \cite{Sol} and \cite{Eugene}, but dealing with measures supported by sets of fractional dimension
causes quite a few difficulties. Some of them are listed below.\\
1) One of the key technical ingredients in \cite{Sol} (and in \cite{Eugene}) was a result saying that the
Orlicz norm of the potential over a square of the side length $t > 0$ with a fixed centre is a continuous function of $t$.
This is no longer true for potentials of the form $V\mu$ (see \eqref{2}) if the measure $\mu$ is supported by an $\alpha$-dimensional
set with $\alpha \in (0, 1]$ and hence can charge the sides of the square. Lemma \ref{direction} allows one to choose the directions of the sides
of the square in such a way that this difficulty is avoided (see Lemma \ref{measlemma2}).\\
2) The Birman-Laptev-Solomyak method (see Section \ref{variational}) used in this paper (and in \cite{Sol}, \cite{Eugene}) splits the problem
into the radial and non-radial parts. The former is essentially a one-dimensional problem and is usually easier to handle than the latter.
If the measure $\mu$ is supported by an $\alpha$-dimensional set with $\alpha \in (0, 2)$, then the radial operator corresponding to \eqref{2}
is a one-dimensional Schr\"odinger operator whose potential is a measure that may be supported by a set of a fractional dimension
and may even have atoms if $\alpha \in (0, 1]$. Hence one needs to extend to such operators appropriate estimates known for
Schr\"odinger operators with potentials locally integrable with respect to the one-dimensional Lebesgue measure (\cite{Sol2}). This
has been carried out in \cite{KS}. \\
3) The Birman-Laptev-Solomyak method allows one to obtain spectral estimates for the non-radial part of the problem mentioned above by
splitting $\mathbb{R}^2\setminus\{0\}$ into homothetic annuli centred at $0$, getting an estimate for one of those annuli, and then extending it
by scaling to all other ones. Getting an estimate for an annulus usually involves covering it by carefully chosen squares, and
an additional difficulty in the case of operator \eqref{2} is that one has to distinguish between squares that are centred in the support
of the mesure $\mu$ and those that are not. Obviously, this complication does not arise in the standard case where $\mu$ is the two-dimensional
Lebesgue measure. Extending an estimate to all annuli by scaling is also not entirely trouble free for operator \eqref{2} as the measure
$\mu$ does not have to be homogeneous. Scaling leads to a change of measure, and one needs explicit information on how the constants
in the estimates depend on the underlying measure. More precisely, one needs to show that those constants depend only on $c_1/c_0$
and $\alpha$ from \eqref{Ahlfors}. Again, it is clear that this complication does not arise in the case where $\mu$ is the two-dimensional
Lebesgue measure.

The paper is organised as follows. Auxiliary results on Orlicz spaces and measures are collected in Section \ref{App}. The main results
are stated in Section \ref{mainresult}.  In Section \ref{variational}, we describe the Birman-Laptev-Solomyak method
and then apply it in Section \ref{proof} to the proof of Theorem \ref{mainthm}. Corollary \ref{maincor} is proved in Section \ref{corproof}. The
(non)optimality of our main estimate \eqref{maineqn} is discussed in Section \ref{remark}. We show that
$$
N_-(\mathcal{E}_{\lambda\, V\!\mu, \mathbb{R}^2}) = O\left(\lambda\right) \ \mbox{ as } \ \lambda \to +\infty
$$
implies that the first  sum in the right-hand side of \eqref{maineqn} is finite. Unfortunately, this is not the case for the second sum. However, we show
that the Orlicz $L\log L$ norm, the $\mathcal{B}$ norm (see \eqref{calB}) to be more precise, cannot be substituted with a weaker
Orlicz norm. Finally, we prove in Appendix some simple asymptotic results that are needed to justify the applicability of  a suitable
endpoint trace theorem (\cite[Theorem 11.8]{Maz};
see Theorem \ref{measthm2} below) in our setting (see the proof of Lemma \ref{meascor}).

\section{Auxiliary material}\label{App}
We start by recalling some notions and results from the theory of Orlicz spaces (see, e.g., \cite[Ch. 8]{Ad}, \cite{KR}, \cite{RR}).
Let $(\Omega, \Sigma, \mu)$ be a measure space and let $\Psi : [0, +\infty) \rightarrow [0, +\infty)$ be a non-decreasing function. The Orlicz class $K_{\Psi}(\Omega, \mu)$ is the set of all of measurable functions $f : \Omega \rightarrow \mathbb{C}\;( \textrm{or}\;\mathbb{R})$ such that
\begin{equation}\label{orliczeqn}
\int_{\Omega}\Psi(|f(x)|)d\mu(x) < \infty\,.
\end{equation} If $\Psi(t) = t^p,\; 1\le p < \infty$, this is just the $L^p(\Omega, \mu)$ space.
\begin{definition}
A continuous non-decreasing convex function $\Psi : [0, +\infty) \rightarrow [0, +\infty)$ is called an $N$-function if
$$
\underset{t \rightarrow 0+}\lim\frac{\Psi (t)}{t} = 0 \;\;\; \textrm{and }\;\;\;\underset{t \rightarrow \infty}\lim\frac{\Psi (t)}{t} = \infty.
$$ The function $\Phi : [0, +\infty) \rightarrow [0, +\infty)$ defined by
$$
\Phi(t) := \underset{s\geq 0}\sup\left(st - \Psi(s)\right)
$$ is called complementary to $\Psi$.
\end{definition}Examples of complementary functions include:
\begin{eqnarray}\label{calB}
&&\Psi(t) = \frac{t^p}{p},\;\;1 < p < \infty,\;\;\;\;\Phi(t) = \frac{t^q}{q}, \;\;\frac{1}{p} + \frac{1}{q} = 1, \nonumber \\
&&\mathcal{A}(s) = e^{|s|} - 1 - |s| , \ \ \ \mathcal{B}(s) = (1 + |s|) \ln(1 + |s|) - |s| , \ \ \ s \in \mathbb{R} .
\end{eqnarray}

We will use the following notation $a_+ := \max\{0, a\}$, $a \in \mathbb{R}$.
\begin{lemma}{\rm (\cite[Lemma 2.2]{Eugene})}\label{elem}
$\frac12\, s\ln_+ s \le \mathcal{B}(s) \le s + 2s\ln_+ s$, \ $\forall s \ge 0$.
\end{lemma}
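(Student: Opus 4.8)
The plan is to prove the two inequalities separately and, in each case, to split the range of $s$ at $s = 1$, since $\ln_+ s = \max\{0,\ln s\}$ vanishes on $[0,1]$ while on $[1,\infty)$ the function $\mathcal{B}(s) = (1+s)\ln(1+s) - s$ grows like $s\ln s$.

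On the range $0 \le s \le 1$ both inequalities are elementary. The left-hand one reads $\mathcal{B}(s) \ge 0$, which holds because $\mathcal{B}(0) = 0$ and $\mathcal{B}'(s) = \ln(1+s) \ge 0$ for $s \ge 0$. The right-hand one reads $\mathcal{B}(s) \le s$: from $\ln(1+s) \le s$ we get $(1+s)\ln(1+s) \le s(1+s) \le 2s$ on $[0,1]$, hence $\mathcal{B}(s) = (1+s)\ln(1+s) - s \le s$. For the upper bound on $s \ge 1$ I would use $1+s \le 2s$ together with monotonicity of the logarithm to obtain $(1+s)\ln(1+s) \le 2s\ln(2s) = 2s\ln s + 2s\ln 2$, so that $\mathcal{B}(s) \le 2s\ln_+ s + (2\ln 2 - 1)s \le s + 2s\ln_+ s$, the final step using $2\ln 2 < 2$.

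The remaining piece — the lower bound $\tfrac12 s\ln s \le \mathcal{B}(s)$ for $s \ge 1$ — is the only one needing slight care, because the crude estimate $\ln(1+s) \ge \ln s$ is too wasteful near $s = 1$. Here I would set $f(s) := \mathcal{B}(s) - \tfrac12 s\ln s$ and check that $f(1) = 2\ln 2 - 1 > 0$ and that $f'(s) = \ln(1+s) - \tfrac12\ln s - \tfrac12 = \ln\bigl(\sqrt{s} + 1/\sqrt{s}\,\bigr) - \tfrac12 \ge \ln 2 - \tfrac12 > 0$, where $\sqrt{s} + 1/\sqrt{s} \ge 2$ is AM--GM. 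Thus $f$ is nondecreasing on $[1,\infty)$ and $f(s) \ge f(1) > 0$, which finishes the proof. No step is a genuine obstacle; the one subtlety is precisely that near $s = 1$ one must argue through the derivative of $f$ (equivalently, via $\tfrac{1+s}{\sqrt{s}} = \sqrt{s} + \tfrac{1}{\sqrt{s}}$ and AM--GM) rather than compare $\mathcal{B}(s)$ with $\tfrac12 s\ln s$ termwise.
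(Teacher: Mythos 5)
Your proof is correct and complete. Note, however, that the paper itself does not prove this lemma: it is imported as a citation to \cite[Lemma~2.2]{Eugene} (Shargorodsky, \emph{Proc.\ LMS} 2014), so there is no internal proof here to compare against. Your argument is a clean, self-contained elementary verification: the split at $s=1$, the crude bound $1+s\le 2s$ for the upper estimate, and the monotonicity argument via $f(s)=\mathcal{B}(s)-\tfrac12 s\ln s$ with $f'(s)=\ln\bigl(\sqrt{s}+1/\sqrt{s}\bigr)-\tfrac12\ge\ln 2-\tfrac12>0$ for the lower estimate are all sound, and you correctly identified that the only delicate point is the lower bound near $s=1$, where a termwise comparison of $\tfrac12 s\ln s$ with $(1+s)\ln(1+s)-s$ would be wasteful.
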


\begin{definition}
An $N$-function $\Psi$ is said to satisfy the global $\Delta_2$-condition if there exists a positive constant $k$ such that for every $t \geq 0$,
\begin{equation}\label{global}
\Psi(2t)\le k\Psi(t).
\end{equation}
Similarly $\Psi$ is said to satisfy the $\Delta_2$-condition near infinity if there exists $t_0 > 0$ such that \eqref{global} holds for all $t \geq t_0$.
\end{definition}
\begin{definition}
A pair $(\Psi, \Omega)$ is called $\Delta$-regular if either $\Psi$ satisfies a global $\Delta_2$-condition, or $\Psi$ satisfies the $\Delta_2$-condition near infinity and $\mu(\Omega) < \infty$.
\end{definition}
\begin{lemma}{\rm (\cite[Lemma 8.8]{Ad})}
$K_{\Psi}(\Omega, \mu)$ is a vector space if and only if $(\Psi, \Omega)$ is $\Delta$-regular.
\end{lemma}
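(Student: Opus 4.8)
The plan is to prove the two implications after a preliminary reduction. Since $\Psi$ is convex, $K_\Psi(\Omega,\mu)$ is a convex set, and since $\Psi$ is non-decreasing it is stable under multiplication by scalars of modulus at most $1$; in particular it is balanced and contains $0$. Hence $K_\Psi(\Omega,\mu)$ is a vector space if and only if $2K_\Psi(\Omega,\mu)\subseteq K_\Psi(\Omega,\mu)$: necessity is clear, and for sufficiency note that if $f,g\in K_\Psi(\Omega,\mu)$ then $\frac12(f+g)\in K_\Psi(\Omega,\mu)$ by convexity, whence $f+g\in 2K_\Psi(\Omega,\mu)\subseteq K_\Psi(\Omega,\mu)$, while for $|\lambda|\le 2^m$ one has $\lambda f\in 2^mK_\Psi(\Omega,\mu)\subseteq K_\Psi(\Omega,\mu)$. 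So it suffices to characterise when $\int_\Omega\Psi(|f|)\,d\mu<\infty$ implies $\int_\Omega\Psi(2|f|)\,d\mu<\infty$ for every measurable $f$.

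Assume first that $(\Psi,\Omega)$ is $\Delta$-regular and that $\int_\Omega\Psi(|f|)\,d\mu<\infty$. If $\Psi$ satisfies the global $\Delta_2$-condition \eqref{global} with constant $k$, then $\Psi(2|f(x)|)\le k\,\Psi(|f(x)|)$ pointwise and integration finishes the argument. If $\Psi$ satisfies \eqref{global} only for $t\ge t_0$ and $\mu(\Omega)<\infty$, I would split $\Omega$ into $\{x:|f(x)|\le t_0\}$, where $\Psi(2|f|)\le\Psi(2t_0)$ and the contribution is at most $\Psi(2t_0)\mu(\Omega)<\infty$, and $\{x:|f(x)|>t_0\}$, where $\Psi(2|f|)\le k\,\Psi(|f|)$ and the contribution is finite. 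In both cases $\int_\Omega\Psi(2|f|)\,d\mu<\infty$.

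For the converse I would argue by contraposition: if $(\Psi,\Omega)$ is not $\Delta$-regular, I construct $f$ with $\int_\Omega\Psi(|f|)\,d\mu<\infty$ and $\int_\Omega\Psi(2|f|)\,d\mu=\infty$. Non-$\Delta$-regularity means \eqref{global} fails globally and, in addition, either it fails near infinity or $\mu(\Omega)=\infty$. Using continuity of $\Psi$ and the $N$-function asymptotics $\Psi(t)/t\to 0$ as $t\to0$, $\Psi(t)/t\to\infty$ as $t\to\infty$, one finds in the main case a sequence $\tau_j$, with $\tau_j\to 0$ or $\tau_j\to\infty$, along which $\Psi(2\tau_j)>2^j\Psi(\tau_j)>0$; choosing pairwise disjoint measurable $\Omega_j\subseteq\Omega$ with $\mu(\Omega_j)=(2^j\Psi(\tau_j))^{-1}$ (after passing to a subsequence, so that $\sum_j\mu(\Omega_j)\le\mu(\Omega)$ when $\mu(\Omega)<\infty$) and setting $f:=\sum_j\tau_j\mathbf{1}_{\Omega_j}$ gives $\int_\Omega\Psi(|f|)\,d\mu=\sum_j2^{-j}=1$ and $\int_\Omega\Psi(2|f|)\,d\mu\ge\sum_j1=\infty$. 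The only remaining possibility is that $\Psi$ vanishes on a maximal interval $[0,a]$ with $a>0$ (so that $\Psi(2a)>0$) and $\mu(\Omega)=\infty$; then $f:=a\,\mathbf{1}_\Omega$ works, since $\int_\Omega\Psi(|f|)\,d\mu=0$ while $\int_\Omega\Psi(2|f|)\,d\mu=\Psi(2a)\,\mu(\Omega)=\infty$.

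The ``if'' part is routine; the main obstacle is the ``only if'' part, where one must carefully enumerate how $\Delta$-regularity can fail and, above all, locate \emph{where} the $\Delta_2$-condition breaks down — near $0$, at a finite transition point of $\Psi$, or near $\infty$ — since the test function $f$ must be built differently in each regime, and this is exactly the step that uses the defining properties of an $N$-function. A secondary technical point is the construction of the disjoint sets $\Omega_j$ of prescribed measures, which implicitly requires $\mu$ to be non-atomic (and $\sigma$-finite when $\mu(\Omega)=\infty$); this is harmless here, since Ahlfors regular measures are non-atomic, but the equivalence genuinely fails without it (on a purely atomic space $K_\Psi(\Omega,\mu)$ consists of all measurable functions, irrespective of $\Delta_2$).
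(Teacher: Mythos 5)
The paper itself does not prove this lemma; it is quoted verbatim from \cite[Lemma 8.8]{Ad}, so there is no in-paper argument to compare against. Your reconstruction is correct and follows the standard textbook route: first the reduction of ``vector space'' to ``closed under doubling'' via convexity and balancedness of $K_\Psi$; then the ``if'' direction by the pointwise estimate under global $\Delta_2$, or by splitting $\Omega$ at the level set $\{|f|\le t_0\}$ when only the near-infinity $\Delta_2$ and $\mu(\Omega)<\infty$ are available; then the ``only if'' direction by contraposition, locating where $\Delta_2$ breaks (a sequence $\tau_j\to\infty$ if the near-infinity condition fails, a sequence $\tau_j\to 0^+$ if $\Psi>0$ on $(0,\infty)$ and $\mu(\Omega)=\infty$, or the endpoint $a$ of the zero set of $\Psi$ if $\Psi\equiv 0$ on $[0,a]$ and $\mu(\Omega)=\infty$), and building a test function supported on disjoint sets of prescribed measure. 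That case split is complete, and the computation $\int\Psi(|f|)\,d\mu=\sum 2^{-j}<\infty$ versus $\int\Psi(2|f|)\,d\mu\ge\sum 1=\infty$ is correct.

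Your closing caveat is the one genuinely substantive point and is worth keeping in mind: the ``only if'' half is a statement about the measure space, not only about $\Psi$, and it does require $\mu$ to be non-atomic (and $\sigma$-finite when $\mu(\Omega)=\infty$) so that disjoint sets $\Omega_j$ of the prescribed measures exist. In Adams' text $\mu$ is Lebesgue measure on a domain, and in this paper $\mu$ is a $\sigma$-finite Ahlfors regular Radon measure, which is non-atomic by \eqref{Ahlfors}; so the hypothesis is met in both contexts, but as your atomic counterexample shows, the equivalence is false for a general $(\Omega,\Sigma,\mu)$. One tiny polish: when you ``pass to a subsequence'' to force $\sum_j\mu(\Omega_j)\le\mu(\Omega)$, note explicitly that discarding indices and relabelling only strengthens the inequality $\Psi(2\tau_j)>2^{j}\Psi(\tau_j)$, so the construction is unaffected.
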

\begin{definition}
The \textsf{Orlicz space} $L_{\Psi}(\Omega, \mu)$ is the linear span of the Orlicz class $K_{\Psi}(\Omega, \mu)$, that is, the smallest vector space containing $K_{\Psi}(\Omega, \mu)$.
\end{definition}
Consequently, $K_{\Psi}(\Omega, \mu) = L_{\Psi}(\Omega, \mu)$ if and only if $(\Psi, \Omega)$ is $\Delta$-regular.\\\\
Let $\Phi$ and $\Psi$ be mutually complementary $N$-functions, and let $L_\Phi(\Omega,\mu)$,
$L_\Psi(\Omega, \mu)$ be the corresponding Orlicz spaces.  We will use the following
norms on $L_\Psi(\Omega, \mu)$
\begin{equation}\label{Orlicz}
\|f\|_{\Psi, \mu} = \|f\|_{\Psi, \Omega, \mu} = \sup\left\{\left|\int_\Omega f g d\mu\right| : \
\int_\Omega \Phi(|g|) d\mu \le 1\right\}
\end{equation}
and
\begin{equation}\label{Luxemburg}
\|f\|_{(\Psi, \mu)} = \|f\|_{(\Psi, \Omega, \mu)} = \inf\left\{\kappa > 0 : \
\int_\Omega \Psi\left(\frac{|f|}{\kappa}\right) d\mu \le 1\right\} .
\end{equation}
These two norms are equivalent
\begin{equation}\label{Luxemburgequiv}
\|f\|_{(\Psi, \mu)} \le \|f\|_{\Psi, \mu} \le 2 \|f\|_{(\Psi, \mu)}\, , \ \ \ \forall f \in L_\Psi(\Omega),
\end{equation}(see, e.g., \cite[(9.24)]{KR}).\\
Note that
\begin{equation}\label{LuxNormImpl}
\int_\Omega \Psi\left(\frac{|f|}{\kappa_0}\right) d\mu \le C_0, \ \ C_0 \ge 1  \ \ \Longrightarrow \ \
\|f\|_{(\Psi, \mu)} \le C_0 \kappa_0
\end{equation}
(see \cite{Eugene}). Indeed, since $\Psi$ is  convex and increasing on
$[0, +\infty)$, and $\Psi(0) = 0$, we get for any $\kappa \ge C_0 \kappa_0$,
\begin{equation}\label{LuxProof}
\int_{\Omega} \Psi\left(\frac{|f|}{\kappa}\right) d\mu \le
\int_{\Omega} \Psi\left(\frac{|f|}{C_0 \kappa_0}\right) d\mu \le
\frac{1}{C_0} \int_{\Omega} \Psi\left(\frac{|f|}{\kappa_0}\right) d\mu \le 1 .
\end{equation}
It follows from \eqref{LuxNormImpl} with $\kappa_0 = 1$ that
\begin{equation}\label{LuxNormPre}
\|f\|_{(\Psi, \mu)} \le \max\left\{1, \int_{\Omega} \Psi(|f|) d\mu\right\} .
\end{equation}
We will need the following
equivalent norm on $L_\Psi(\Omega, \mu)$ with $\mu(\Omega) < \infty$, which was introduced in
\cite{Sol}:
\begin{equation}\label{OrlAverage}
\|f\|^{\rm (av)}_{\Psi, \mu} = \|f\|^{\rm (av)}_{\Psi, \Omega, \mu} = \sup\left\{\left|\int_\Omega f g d\mu\right| : \
\int_\Omega \Phi(|g|) d\mu \le \mu(\Omega)\right\} .
\end{equation}

\begin{proposition}{\rm  (\cite[Theorem 9.3]{KR})}
For any $f\in L_{\Psi}(\Omega, \mu)$ and $g\in L_{\Phi}(\Omega, \mu)$
\begin{equation}\label{Holder}
\left|\int_{\Omega}fg\,d\mu\right| \le \|f\|_{\Psi,\Omega, \mu}\|g\|_{\Phi,\Omega, \mu}.
\end{equation}
In particular, $fg\in L^1(\Omega, \mu)$.
\end{proposition}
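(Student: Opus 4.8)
This is a standard Orlicz-space H\"older inequality, and the plan is to read it off directly from the duality definition \eqref{Orlicz} of $\|\cdot\|_{\Psi,\Omega,\mu}$ after normalising $g$ by $\|g\|_{\Phi,\Omega,\mu}$. First I would dispose of the trivial cases. Since $f\in L_\Psi(\Omega,\mu)$ and $g\in L_\Phi(\Omega,\mu)$, both right-hand norms in \eqref{Holder} are finite (they are finite on the linear span of the respective Orlicz classes, and by \eqref{Luxemburgequiv} this is equivalent to finiteness of the Luxemburg norms). If $\|f\|_{\Psi,\Omega,\mu}=0$, then $\|f\|_{(\Psi,\Omega,\mu)}=0$ by \eqref{Luxemburgequiv}, so $\int_\Omega\Psi(|f|/\kappa)\,d\mu\le1$ for every $\kappa>0$; letting $\kappa\to0+$ and using that $\Psi(t)/t\to\infty$ (hence $\Psi(t)\to\infty$) together with Fatou's lemma forces $f=0$ $\mu$-a.e., and then both sides of \eqref{Holder} vanish; the case $\|g\|_{\Phi,\Omega,\mu}=0$ is identical. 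So from now on one may assume both norms are strictly positive.

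The key step I would isolate is the normalisation estimate
\[
\int_\Omega\Phi(|g|/\tau)\,d\mu\le1\qquad\text{for every }\tau>\|g\|_{\Phi,\Omega,\mu}.
\]
To prove it, note that $\tau>\|g\|_{\Phi,\Omega,\mu}\ge\|g\|_{(\Phi,\Omega,\mu)}$ by \eqref{Luxemburgequiv}, so the definition \eqref{Luxemburg} of the Luxemburg norm yields some $\kappa$ with $\|g\|_{(\Phi,\Omega,\mu)}\le\kappa<\tau$ and $\int_\Omega\Phi(|g|/\kappa)\,d\mu\le1$; since $\Phi$ is non-decreasing, $\int_\Omega\Phi(|g|/\tau)\,d\mu\le\int_\Omega\Phi(|g|/\kappa)\,d\mu\le1$. (The complementarity of $\Psi$ and $\Phi$, i.e. Young's inequality $st\le\Psi(s)+\Phi(t)$, enters only here, through the already-cited equivalence \eqref{Luxemburgequiv}; one could alternatively invoke it directly and argue as in \eqref{LuxProof}.)

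Finally I would conclude as follows. Fix $\tau>\|g\|_{\Phi,\Omega,\mu}$ and put $h:=\tau^{-1}|g|\,\overline{f}/|f|$ on $\{f\neq0\}$ and $h:=0$ on $\{f=0\}$; this is a measurable function with $|h|\le\tau^{-1}|g|$, so $\int_\Omega\Phi(|h|)\,d\mu\le\int_\Omega\Phi(\tau^{-1}|g|)\,d\mu\le1$ by the key step, and hence $h$ is admissible in \eqref{Orlicz}. Since $fh=\tau^{-1}|f|\,|g|$ $\mu$-a.e. (both sides vanish on $\{f=0\}$), \eqref{Orlicz} gives $\tau^{-1}\int_\Omega|f|\,|g|\,d\mu=\int_\Omega fh\,d\mu\le\|f\|_{\Psi,\Omega,\mu}$, that is, $\int_\Omega|fg|\,d\mu\le\tau\,\|f\|_{\Psi,\Omega,\mu}$; letting $\tau\downarrow\|g\|_{\Phi,\Omega,\mu}$ gives $\int_\Omega|fg|\,d\mu\le\|f\|_{\Psi,\Omega,\mu}\|g\|_{\Phi,\Omega,\mu}<\infty$, so in particular $fg\in L^1(\Omega,\mu)$, and then $|\int_\Omega fg\,d\mu|\le\int_\Omega|fg|\,d\mu$ yields \eqref{Holder}. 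I do not expect a genuine obstacle here: the only points requiring a little care are the measurability of the unimodular factor $\overline{f}/|f|$ and the treatment of the degenerate cases, while the analytic content is entirely contained in the cited norm equivalence \eqref{Luxemburgequiv}.
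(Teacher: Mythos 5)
The paper does not supply a proof of this proposition: it is quoted verbatim as Theorem 9.3 of Krasnosel'skii--Rutickii, so there is no ``paper's own proof'' to compare against. Your argument is a correct, self-contained reconstruction of the standard proof of that theorem, and it is the right argument to give. It normalises $g$ via the Luxemburg norm, turns it into an admissible competitor in the duality definition \eqref{Orlicz}, and lets the normalising constant shrink to $\|g\|_{\Phi,\Omega,\mu}$; the degenerate cases and the unimodular factor $\overline{f}/|f|$ are handled cleanly.

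One small point deserves a sentence you currently elide. When you write $\tau^{-1}\int_\Omega |f|\,|g|\,d\mu=\int_\Omega fh\,d\mu\le\|f\|_{\Psi,\Omega,\mu}$, you are invoking \eqref{Orlicz}, but \eqref{Orlicz} only gives a bound on $\left|\int_\Omega fh\,d\mu\right|$ once that integral is known to be a well-defined finite number --- and the finiteness of $\int|fg|\,d\mu$ is precisely the ``in particular'' assertion you are trying to prove, so there is a small circularity to cut. Two standard ways to cut it: either observe first (Young's inequality $st\le\Psi(s)+\Phi(t)$ applied to $c|f|$ and $|g|$, where $c>0$ is chosen so that $cf\in K_\Psi$) that $\int|fg|\,d\mu<\infty$ for every $g$ with $\int_\Omega\Phi(|g|)\,d\mu\le1$, which legitimises \eqref{Orlicz} as a supremum of genuine finite quantities; or replace $h$ by the truncations $h_n:=h\,\mathbf{1}_{\{|h|\le n\}}\,\mathbf{1}_{B_n}$ with $B_n\uparrow\Omega$, $\mu(B_n)<\infty$, note $|h_n|\le|h|$ so $h_n$ is admissible, deduce $\int_\Omega fh_n\,d\mu\le\|f\|_{\Psi,\Omega,\mu}$ for each $n$, and pass to the limit by monotone convergence (using that $fh\ge0$). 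Either route is two lines; with that insertion your proof is complete and matches the argument one finds in Krasnosel'skii--Rutickii.
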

The above is called the H\"older inequality for Orlicz spaces.
The following is referred to as the strengthened H\"older inequality:
\begin{equation}\label{h2}
\left|\int_{\Omega}fg\,d\mu\right| \le \|f\|_{(\Psi,\Omega, \mu)}\|g\|_{\Phi,\Omega, \mu}\,,
\end{equation}
for all $f\in L_{\Psi}(\Omega, \mu)$ and $g\in L_{\Phi}(\Omega, \mu)$ (see \cite[(9.27)]{KR}).

\begin{lemma}\label{lemma7}{\rm  (\cite[Lemma 3]{Sol})}
 For any finite collection of pairwise disjoint subsets $\Omega_k$ of $\Omega$
\begin{equation}\label{bsr1}
\sum_k\|f\|^{(av)}_{\Psi,\Omega_k, \mu} \le \|f\|^{(av)}_{\Psi,\Omega, \mu}.
\end{equation}
\end{lemma}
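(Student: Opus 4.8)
The plan is to prove \eqref{bsr1} by gluing together near-optimal test functions on the individual pieces $\Omega_k$ into a single admissible test function on $\Omega$. Recall that, by \eqref{OrlAverage}, $\|f\|^{\rm(av)}_{\Psi,\Omega_k,\mu}$ is the supremum of $\left|\int_{\Omega_k} f g\, d\mu\right|$ over all measurable $g$ satisfying $\int_{\Omega_k}\Phi(|g|)\,d\mu \le \mu(\Omega_k)$; since this norm is only considered when $\mu(\Omega) < \infty$, each $\mu(\Omega_k)$ is finite, so all the quantities involved are well defined.

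Fix $\varepsilon > 0$ and write the given finite collection as $\{\Omega_k\}_{k=1}^K$. For each $k$, choose a measurable $g_k$ on $\Omega_k$ with $\int_{\Omega_k}\Phi(|g_k|)\,d\mu \le \mu(\Omega_k)$ and
$$
\left|\int_{\Omega_k} f g_k\, d\mu\right| \ge \|f\|^{\rm(av)}_{\Psi,\Omega_k,\mu} - \frac{\varepsilon}{K} .
$$
Replacing $g_k$ by $c_k g_k$ with a suitable unimodular constant $c_k$ (which does not change $\int_{\Omega_k}\Phi(|g_k|)\,d\mu$, since $\Phi(|c_k g_k|) = \Phi(|g_k|)$), we may assume that $\int_{\Omega_k} f g_k\, d\mu = \left|\int_{\Omega_k} f g_k\, d\mu\right| \ge 0$. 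Now define $g$ on $\Omega$ by $g := g_k$ on $\Omega_k$ for each $k$ and $g := 0$ on $\Omega \setminus \bigcup_k \Omega_k$; this is well defined and measurable because the $\Omega_k$ are pairwise disjoint. Then, by disjointness and additivity of $\mu$,
$$
\int_\Omega \Phi(|g|)\,d\mu = \sum_{k=1}^K \int_{\Omega_k}\Phi(|g_k|)\,d\mu \le \sum_{k=1}^K \mu(\Omega_k) = \mu\!\left(\bigcup_{k=1}^K\Omega_k\right) \le \mu(\Omega) ,
$$
so $g$ is admissible in the supremum defining $\|f\|^{\rm(av)}_{\Psi,\Omega,\mu}$. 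Hence
$$
\|f\|^{\rm(av)}_{\Psi,\Omega,\mu} \ge \left|\int_\Omega f g\, d\mu\right| = \sum_{k=1}^K \int_{\Omega_k} f g_k\, d\mu = \sum_{k=1}^K \left|\int_{\Omega_k} f g_k\, d\mu\right| \ge \sum_{k=1}^K \|f\|^{\rm(av)}_{\Psi,\Omega_k,\mu} - \varepsilon ,
$$
and letting $\varepsilon \to 0+$ gives \eqref{bsr1}.

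The argument is essentially bookkeeping, so I do not anticipate a substantive obstacle; the only point that needs a little care is that the defining constraint $\int\Phi(|g|)\,d\mu \le \mu(\cdot)$ is additive over disjoint sets, so that the nonnegative slack $\mu(\Omega) - \mu(\bigcup_k\Omega_k) \ge 0$ guarantees admissibility of the glued function. This is precisely why the averaged norm \eqref{OrlAverage}, rather than the ordinary Orlicz norm \eqref{Orlicz}, is the natural object for a super-additivity statement of this kind.
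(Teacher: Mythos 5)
Your proof is correct. Note that the paper does not actually supply a proof of this lemma --- it is quoted from \cite[Lemma 3]{Sol} without argument --- so there is nothing in the paper itself to compare against. Your gluing argument is the standard one for this type of super\-additivity: the key point, which you identify correctly, is that the admissibility constraint $\int_{\Omega}\Phi(|g|)\,d\mu \le \mu(\Omega)$ in \eqref{OrlAverage} is additive over the disjoint pieces (using $\Phi(0)=0$, so that extending $g$ by $0$ off $\bigcup_k \Omega_k$ contributes nothing to the left-hand side), and the ordinary non-averaged Orlicz norm \eqref{Orlicz}, whose constraint $\int_\Omega\Phi(|g|)\,d\mu\le 1$ is not additive, would not admit such a bound. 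The phase rotation by unimodular $c_k$ to make each $\int_{\Omega_k}fg_k\,d\mu$ nonnegative is exactly what is needed to convert $\bigl|\sum_k\cdots\bigr|$ into $\sum_k|\cdots|$. All the degenerate cases ($\mu(\Omega_k)=0$, or a vanishing summand) are handled automatically, so the argument is complete.
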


Let
\begin{equation}\label{OrlAverage*}
\|f\|^{\rm (av),\tau}_{\Psi,\Omega, \mu}  = \sup\left\{\left|\int_\Omega f \varphi d\mu\right| : \
\int_\Omega \Phi(|\varphi|) d\mu \le \tau\mu(\Omega)\right\},\;\;\;\tau > 0 .
\end{equation}
\begin{lemma}\label{lemma8}For any $\tau_1,\;\tau_2 > 0$
\begin{equation}\label{bsr2}
\min\left\{1, \frac{\tau_2}{\tau_1}\right\} \|f\|^{\rm (av),\tau_1}_{\Psi,\Omega, \mu}
\le \|f\|^{\rm (av),\tau_2}_{\Psi,\Omega, \mu} \le \max\left\{1, \frac{\tau_2}{\tau_1}\right\} \|f\|^{\rm (av),\tau_1}_{\Psi,\Omega, \mu}.
\end{equation}
\end{lemma}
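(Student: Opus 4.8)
The two inequalities are symmetric: interchanging the roles of $\tau_1$ and $\tau_2$ turns the right-hand inequality
$$
\|f\|^{\rm (av),\tau_2}_{\Psi,\Omega, \mu} \le \max\left\{1, \tfrac{\tau_2}{\tau_1}\right\} \|f\|^{\rm (av),\tau_1}_{\Psi,\Omega, \mu}
$$
into the left-hand one with $\tau_1$ and $\tau_2$ swapped, since $\big(\min\{1,\tau_1/\tau_2\}\big)^{-1} = \max\{1,\tau_2/\tau_1\}$. So the plan is to prove only the lower bound
$$
\min\left\{1, \tfrac{\tau_2}{\tau_1}\right\}\|f\|^{\rm (av),\tau_1}_{\Psi,\Omega, \mu} \le \|f\|^{\rm (av),\tau_2}_{\Psi,\Omega, \mu}
$$
for all $\tau_1,\tau_2>0$, and then obtain the upper bound by exchanging $\tau_1\leftrightarrow\tau_2$. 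Throughout, recall that $\mu(\Omega)<\infty$ and that $\Phi$ is an $N$-function, hence convex with $\Phi(0)=0$.

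First I would dispose of the easy case $\tau_2\ge\tau_1$. Here every $\varphi$ admissible in the definition \eqref{OrlAverage*} of $\|f\|^{\rm (av),\tau_1}_{\Psi,\Omega, \mu}$, i.e. with $\int_\Omega \Phi(|\varphi|)\,d\mu\le\tau_1\mu(\Omega)$, also satisfies $\int_\Omega \Phi(|\varphi|)\,d\mu\le\tau_2\mu(\Omega)$, so it is admissible for $\|f\|^{\rm (av),\tau_2}_{\Psi,\Omega, \mu}$ as well; taking the supremum over such $\varphi$ gives $\|f\|^{\rm (av),\tau_1}_{\Psi,\Omega, \mu}\le\|f\|^{\rm (av),\tau_2}_{\Psi,\Omega, \mu}$, which is the claim since $\min\{1,\tau_2/\tau_1\}=1$.

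For the remaining case $0<\tau_2<\tau_1$, set $\lambda:=\tau_2/\tau_1\in(0,1)$ and let $\varphi$ be admissible for $\|f\|^{\rm (av),\tau_1}_{\Psi,\Omega, \mu}$. By convexity of $\Phi$ and $\Phi(0)=0$ we have the scaling inequality $\Phi(\lambda t)\le\lambda\Phi(t)$ for all $t\ge0$, hence
$$
\int_\Omega \Phi(\lambda|\varphi|)\,d\mu \le \lambda\int_\Omega \Phi(|\varphi|)\,d\mu \le \lambda\,\tau_1\mu(\Omega) = \tau_2\mu(\Omega),
$$
so $\lambda\varphi$ is admissible for $\|f\|^{\rm (av),\tau_2}_{\Psi,\Omega, \mu}$. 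Therefore $\lambda\left|\int_\Omega f\varphi\,d\mu\right| = \left|\int_\Omega f(\lambda\varphi)\,d\mu\right| \le \|f\|^{\rm (av),\tau_2}_{\Psi,\Omega, \mu}$, and taking the supremum over admissible $\varphi$ yields $\frac{\tau_2}{\tau_1}\|f\|^{\rm (av),\tau_1}_{\Psi,\Omega, \mu}\le\|f\|^{\rm (av),\tau_2}_{\Psi,\Omega, \mu}$, as required. This completes the lower bound, and the upper bound follows by the symmetry noted above. There is no real obstacle here; the only point worth isolating is the elementary convexity estimate $\Phi(\lambda t)\le\lambda\Phi(t)$ for $\lambda\in[0,1]$, which is what makes the rescaling $\varphi\mapsto\lambda\varphi$ respect the constraint.
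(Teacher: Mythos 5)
Your proof is correct and uses essentially the same two ingredients as the paper's: containment of the constraint sets when $\tau_2\ge\tau_1$, and the convexity estimate $\Phi(\lambda t)\le\lambda\Phi(t)$, $\lambda\in[0,1]$, to rescale admissible test functions when $\tau_2<\tau_1$. The only organizational difference is that you observe the $\tau_1\leftrightarrow\tau_2$ symmetry explicitly to deduce the upper bound from the lower one, whereas the paper runs through both inequalities directly in each case; this is a tidier presentation of the same argument, not a different method.
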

\begin{proof}
Let
$$
X_1:=\left\{\varphi\; :\;\int_{\Omega}\Phi(|\varphi|)d\mu \le \tau_1\mu(\Omega)\right\},\;\;\;X_2
:= \left\{\varphi\; :\;\int_{\Omega}\Phi(|\varphi|)d\mu \le \tau_2\mu(\Omega)\right\}.
$$
Suppose that $\tau_1\le \tau_2$. Then, it is clear that $\|f\|^{\rm (av),\tau_1}_{\Psi,\Omega, \mu}\le \|f\|^{\rm (av),\tau_2}_{\Psi,\Omega, \mu}$. Now, since $\Phi$ is convex and $\Phi(0) = 0$, then
$$
\varphi\in X_2 \Rightarrow\;\;\frac{\tau_1}{\tau_2}\varphi\in X_1\,, \;\;\;(\textrm{cf}.\,\eqref{LuxProof}).
$$ Hence,
$$
\|f\|^{\rm (av),\tau_2}_{\Psi,\Omega, \mu} = \underset{\varphi\in X_2}\sup\left|\int_{\Omega}f\varphi d\mu\right| \le \underset{\phi\in X_1}\sup\left|\int_{\Omega}f.\left(\frac{\tau_2}{\tau_1}\phi\right)d\mu\right| = \frac{\tau_2}{\tau_1}\|f\|^{\rm (av),\tau_1}_{\Psi,\Omega, \mu}.
$$
On the other hand, suppose that $\tau_1 \geq \tau_2$. Then
$$
\|f\|^{\rm (av),\tau_2}_{\Psi,\Omega, \mu}\le \|f\|^{\rm (av),\tau_1}_{\Psi,\Omega, \mu} \le \frac{\tau_1}{\tau_2}\|f\|^{\rm (av),\tau_2}_{\Psi,\Omega, \mu}.
$$Hence,
$$
\min\left\{1, \frac{\tau_2}{\tau_1}\right\} \|f\|^{\rm (av),\tau_1}_{\Psi,\Omega, \mu} \le \|f\|^{\rm (av),\tau_2}_{\Psi,\Omega, \mu}
$$ and
$$
\|f\|^{\rm (av),\tau_2}_{\Psi,\Omega, \mu} \le \max\left\{1, \frac{\tau_2}{\tau_1}\right\} \|f\|^{\rm (av),\tau_1}_{\Psi,\Omega, \mu}.
$$
\end{proof}
As a result of the above Lemma, we have the following:
\begin{corollary}\label{avequiv}{\rm (\cite[Lemma 2.1]{Eugene})}
$$
\min\{1, \mu(\Omega)\}\, \|f\|_{\Psi, \Omega, \mu} \le \|f\|^{\rm (av)}_{\Psi, \Omega, \mu}
\le \max\{1, \mu(\Omega)\}\, \|f\|_{\Psi, \Omega, \mu}.
$$
\end{corollary}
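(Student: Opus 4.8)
The plan is to recognise both norms appearing in the statement as particular members of the one‑parameter family $\|\cdot\|^{\rm (av),\tau}_{\Psi,\Omega, \mu}$ introduced in \eqref{OrlAverage*}, and then to read off the result from Lemma \ref{lemma8}. Throughout one assumes, as in the definition of the averaged norm, that $0 < \mu(\Omega) < \infty$; the degenerate case $\mu(\Omega) = 0$ is trivial, since then $\int_\Omega f\varphi\,d\mu = 0$ for every admissible $\varphi$, so both $\|f\|_{\Psi,\Omega,\mu}$ and $\|f\|^{\rm (av)}_{\Psi,\Omega,\mu}$ vanish and the asserted inequalities hold with all terms equal to $0$.

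First I would compare the constraint sets. By \eqref{Orlicz} and \eqref{OrlAverage*}, the norm $\|f\|_{\Psi, \Omega, \mu}$ is defined by the constraint $\int_\Omega \Phi(|\varphi|)\,d\mu \le 1$, which is exactly the constraint $\int_\Omega \Phi(|\varphi|)\,d\mu \le \tau\mu(\Omega)$ with $\tau = 1/\mu(\Omega)$; hence $\|f\|_{\Psi, \Omega, \mu} = \|f\|^{\rm (av),\,1/\mu(\Omega)}_{\Psi,\Omega, \mu}$. Likewise, comparing \eqref{OrlAverage} with \eqref{OrlAverage*}, the averaged norm corresponds to the constraint $\int_\Omega \Phi(|\varphi|)\,d\mu \le \mu(\Omega)$, i.e. to the value $\tau = 1$, so $\|f\|^{\rm (av)}_{\Psi, \Omega, \mu} = \|f\|^{\rm (av),\,1}_{\Psi,\Omega, \mu}$.

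Then I would simply apply Lemma \ref{lemma8} with $\tau_1 = 1/\mu(\Omega)$ and $\tau_2 = 1$, so that $\tau_2/\tau_1 = \mu(\Omega)$. This gives
$$
\min\{1, \mu(\Omega)\}\, \|f\|^{\rm (av),\,1/\mu(\Omega)}_{\Psi,\Omega, \mu} \le \|f\|^{\rm (av),\,1}_{\Psi,\Omega, \mu} \le \max\{1, \mu(\Omega)\}\, \|f\|^{\rm (av),\,1/\mu(\Omega)}_{\Psi,\Omega, \mu} ,
$$
and substituting back $\|f\|^{\rm (av),\,1/\mu(\Omega)}_{\Psi,\Omega, \mu} = \|f\|_{\Psi, \Omega, \mu}$ and $\|f\|^{\rm (av),\,1}_{\Psi,\Omega, \mu} = \|f\|^{\rm (av)}_{\Psi, \Omega, \mu}$ yields precisely the claimed double inequality.

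There is essentially no obstacle here: the corollary is an immediate specialisation of Lemma \ref{lemma8}. The only point needing a moment's attention is the bookkeeping of which constraint corresponds to which value of $\tau$, and the observation that it is the finiteness and positivity of $\mu(\Omega)$ that makes the identification $\tau_1 = 1/\mu(\Omega)$ legitimate, with the case $\mu(\Omega) = 0$ dealt with separately as noted above.
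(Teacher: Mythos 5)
Your proof is correct and follows exactly the route the paper intends: the paper states the corollary immediately after Lemma \ref{lemma8} with the remark ``As a result of the above Lemma'' and gives no further argument, and your specialisation $\tau_1 = 1/\mu(\Omega)$, $\tau_2 = 1$ (so $\tau_2/\tau_1 = \mu(\Omega)$) together with the identifications $\|f\|_{\Psi,\Omega,\mu} = \|f\|^{\rm (av),\,1/\mu(\Omega)}_{\Psi,\Omega,\mu}$ and $\|f\|^{\rm (av)}_{\Psi,\Omega,\mu} = \|f\|^{\rm (av),\,1}_{\Psi,\Omega,\mu}$ is the only natural way to read it. Your remark on the degenerate case $\mu(\Omega)=0$ and on needing $\mu(\Omega)<\infty$ for the definition of the averaged norm is a sensible, if minor, addition.
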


Let $(\Omega_1, \Sigma_1)$ and $(\Omega_2, \Sigma_2)$ be a pair of measurable spaces and $\xi :(\Omega_1, \Sigma_1) \to (\Omega_2, \Sigma_2)$ be an isomorphism, i.e. let $\xi$ be a bijection such that both $\xi$ and $\xi^{-1}$ are measurable. Let $\mu$ be a
 finite measure on
 $(\Omega_2, \Sigma_2)$ and $V: (\Omega_2, \Sigma_2) \to \mathbb{C}$ be a measurable function. Then $\tilde{V} := V\circ \xi$ is a measurable
 function on $(\Omega_1, \Sigma_1)$ and $\tilde{\mu} := \mu\circ \xi$,
 $$
 \tilde{\mu}(E) = \mu(\xi(E)) , \ \ \ E \in \Sigma_1
 $$
 is a mesure on $(\Omega_1, \Sigma_1)$.
For any $c > 0$ and any mutually complementary $N$-functions $\Phi$ and $\Psi$,
one gets  using \eqref{OrlAverage} and the change of variable formula (see, e.g., \cite[Lemma 5.0.1]{strok})
\begin{eqnarray}\label{scale}
\|V\|^{(\textrm{av})}_{\Psi, \Omega_2, \mu} &=&\textrm{sup}\left\{\left|\int_{\Omega_2}V f\,d\mu\right|
\;:\;\int_{\Omega_2} \Phi(|f|)\,d\mu \le \mu(\Omega_2)\right\}\nonumber\\
&=& \textrm{sup}\left\{\frac{1}{c}\left|\int_{\Omega_1}\tilde{V} g\,d(c\tilde{\mu})\right| \;:\;\int_{\Omega_1}\Phi(|g|)\,d(c\tilde{\mu}) \le
c\tilde{\mu}(\Omega_1)\right\}\nonumber\\
&=& \frac{1}{c}\left\|\tilde{V}\right\|^{(\textrm{av})}_{\Psi, \Omega_1, c\tilde{\mu}}\;.
\end{eqnarray}
Hence, by Corollary \ref{avequiv}
\begin{equation}\label{maz7}
\left\|\tilde{V}\right\|_{\Psi, \Omega_1, c\tilde{\mu}} \le
\frac{1}{\min\{1, c\tilde{\mu}(\Omega_1)\}} \left\|\tilde{V}\right\|^{(\textrm{av})}_{\Psi, \Omega_1, c\tilde{\mu}}
= \frac{c}{\min\{1, c\tilde{\mu}(\Omega_1)\}} \|V\|^{(\textrm{av})}_{\Psi, \Omega_2, \mu}\;.
\end{equation}

\begin{lemma}\label{L1Orl}
$$
\|f\|_{L_1(\Omega, \mu)} \le \Psi^{-1}(1) \|f\|^{\rm (av)}_{\Psi, \Omega, \mu} .
$$
\end{lemma}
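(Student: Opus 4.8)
The plan is to test the supremum defining $\|f\|^{\rm (av)}_{\Psi, \Omega, \mu}$ against a single cleverly chosen competitor $g$ — a suitable constant multiple of the ``sign'' of $f$ — and then to trade the constant this produces for $\Psi^{-1}(1)$ via the elementary inequality $\Psi^{-1}(1)\,\Phi^{-1}(1) \ge 1$ for mutually complementary $N$-functions. We may assume $f \in L_1(\Omega, \mu)$, since otherwise the competitor below already forces the right-hand side to be infinite; the degenerate cases $\mu(\Omega) = 0$ and $f = 0$ $\mu$-a.e. are trivial.

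First I would set $b := \Phi^{-1}(1)$, which is a well-defined positive number because $\Phi$ is continuous and non-decreasing with $\Phi(0) = 0$ and $\Phi(t) \to +\infty$, and which satisfies $\Phi(b) \le 1$. Define $g$ on $\Omega$ by $g := b\,\overline{f}/|f|$ on $\{f \neq 0\}$ and $g := 0$ elsewhere; then $g$ is measurable and $|g| \le b$ pointwise, so $\int_\Omega \Phi(|g|)\, d\mu \le \Phi(b)\,\mu(\Omega) \le \mu(\Omega)$, i.e. $g$ is admissible in \eqref{OrlAverage}. Since $fg = b|f| \ge 0$, this gives
$$
\Phi^{-1}(1)\, \|f\|_{L_1(\Omega, \mu)} = \left|\int_\Omega f g\, d\mu\right| \le \|f\|^{\rm (av)}_{\Psi, \Omega, \mu} .
$$

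It then remains to show $1/\Phi^{-1}(1) \le \Psi^{-1}(1)$, equivalently $\Psi^{-1}(1)\,\Phi^{-1}(1) \ge 1$. Write $a := \Psi^{-1}(1)$, so $\Psi(a) = 1$ and $a > 0$. Since $t \mapsto \Psi(t)/t$ is non-decreasing on $(0, +\infty)$ (convexity of $\Psi$ together with $\Psi(0) = 0$), for $s \ge a$ one has $\Psi(s) \ge (s/a)\Psi(a) = s/a$, hence $s/a - \Psi(s) \le 0$, while for $0 \le s \le a$ one has $s/a - \Psi(s) \le s/a \le 1$. Taking the supremum over $s \ge 0$ yields $\Phi(1/a) = \sup_{s\ge 0}\big(s/a - \Psi(s)\big) \le 1$, and since $\Phi$ is non-decreasing this forces $1/a \le \Phi^{-1}(1)$, i.e. $\Psi^{-1}(1)\,\Phi^{-1}(1) \ge 1$. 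Combining this with the displayed inequality gives $\|f\|_{L_1(\Omega, \mu)} \le \Psi^{-1}(1)\, \|f\|^{\rm (av)}_{\Psi, \Omega, \mu}$. I do not expect a genuine obstacle: the only conceptual point is to work directly with the averaged supremum rather than passing through $\|f\|_{\Psi,\Omega,\mu}$ (which would reintroduce $\mu(\Omega)$ via Corollary \ref{avequiv}); the remaining care needed is the usual bookkeeping for generalised inverses of $N$-functions (so that $\Phi(\Phi^{-1}(1)) \le 1$ and $\Phi(s) \le 1 \Rightarrow s \le \Phi^{-1}(1)$, both immediate from continuity and monotonicity of $\Phi$) and the evident measurability of $g$.
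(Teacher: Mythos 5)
Your proof is correct, but it takes a genuinely different route from the paper's. The paper normalises the measure ($\mu_1 := \mu/\mu(\Omega)$, so $\mu_1(\Omega) = 1$), applies the H\"older inequality \eqref{Holder} with $g \equiv 1$, invokes the explicit formula $\|1\|_{\Phi,\Omega,\mu_1} = \mu_1(\Omega)\Psi^{-1}(1/\mu_1(\Omega)) = \Psi^{-1}(1)$ from \cite[(9.11)]{KR}, uses $\|f\|_{\Psi,\Omega,\mu_1} = \|f\|^{\rm (av)}_{\Psi,\Omega,\mu_1}$ (because $\mu_1(\Omega)=1$), and then rescales back via \eqref{scale}. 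You instead work directly with the averaged supremum \eqref{OrlAverage} by exhibiting a single admissible competitor $g = \Phi^{-1}(1)\,\overline{f}/|f|$, which yields the sharper intermediate bound $\|f\|_{L_1(\Omega,\mu)} \le \bigl(\Phi^{-1}(1)\bigr)^{-1}\|f\|^{\rm (av)}_{\Psi,\Omega,\mu}$, and you then weaken it to the stated form using the elementary estimate $\Psi^{-1}(1)\Phi^{-1}(1) \ge 1$ (itself a special case of the standard inequality $t < \Psi^{-1}(t)\Phi^{-1}(t) \le 2t$, which you re-derive from scratch via $\Psi(s)/s$ being non-decreasing). Your approach is self-contained and avoids both the external citation and the scaling step, at the modest cost of the extra argument for $\Psi^{-1}(1)\Phi^{-1}(1)\ge1$; the paper's approach is shorter if one takes the Orlicz-norm formula for characteristic functions as known. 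One small remark: the norm $\|\cdot\|^{\rm (av)}_{\Psi,\Omega,\mu}$ is only defined when $\mu(\Omega) < \infty$, which you use implicitly in the estimate $\int_\Omega\Phi(|g|)\,d\mu \le \Phi(b)\mu(\Omega)$; it would be worth stating this assumption alongside the other degenerate-case remarks, as the paper does.
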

\begin{proof}
Clearly, one only needs to consider the case $0 < \mu(\Omega) < \infty$.
Let $\mu_1 := \frac{1}{\mu(\Omega)}\, \mu$. Then $\mu_1(\Omega) = 1$, and
using \eqref{Holder},  \cite[(9.11)]{KR}, and \eqref{scale} (with $c =  \frac{1}{\mu(\Omega)}$,
$(\Omega_1, \Sigma_1) = (\Omega_2, \Sigma_2) = (\Omega, \Sigma)$,
and $\xi(x) \equiv x$), one gets
\begin{eqnarray*}
&& \int_{\Omega} |f(x)|\,d\mu(x) =  \mu(\Omega) \int_{\Omega} |f(x)|\,d\mu_1(x) \le
\mu(\Omega) \|f\|_{\Psi, \Omega, \mu_1} \|1\|_{\Phi, \Omega, \mu_1} \\
&& = \mu(\Omega) \|f\|^{\rm (av)}_{\Psi, \Omega, \mu_1} \Psi^{-1}(1)
= \mu(\Omega) \|f\|^{\rm (av)}_{\Psi, \Omega, \frac{1}{\mu(\Omega)}\, \mu} \Psi^{-1}(1)
= \|f\|^{\rm (av)}_{\Psi, \Omega, \mu} \Psi^{-1}(1) .
\end{eqnarray*}
\end{proof}

\begin{lemma}\label{avequivB}{\rm (\cite[Lemma 2.5]{Eugene})}
Let $\mu(\Omega) > 1$. Then
$$
\|f\|^{\rm (av)}_{\mathcal{B}, \Omega, \mu}
\le \|f\|_{\mathcal{B}, \Omega, \mu} + \ln\left(\frac72\, \mu(\Omega)\right)\, \|f\|_{L_1(\Omega, \mu)} .
$$
\end{lemma}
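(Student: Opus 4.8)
The plan is to bound $\left|\int_\Omega f g\, d\mu\right|$ for an arbitrary $g$ admissible in the supremum \eqref{OrlAverage} defining $\|f\|^{\rm (av)}_{\mathcal{B}, \Omega, \mu}$, that is, for $g$ with $\int_\Omega \mathcal{A}(|g|)\, d\mu \le \mu(\Omega)$, where $\mathcal{A}$ is the $N$-function complementary to $\mathcal{B}$ (see \eqref{calB}). We may assume $f \in L_{\mathcal{B}}(\Omega, \mu)$, since otherwise the right-hand side is infinite; as $\mu(\Omega) < \infty$ this also forces $f \in L_1(\Omega, \mu)$, so every integral below converges. Set the truncation level $L := \ln\!\left(\frac{7}{2}\, \mu(\Omega)\right)$; since $\mu(\Omega) > 1$ we have $L > 0$. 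Split $g = g_1 + g_2$, where $g_1 := (\operatorname{sgn} g)\min\{|g|, L\}$ and $g_2 := (\operatorname{sgn} g)(|g| - L)_+$. Because $|g_1| \le L$ pointwise,
$$
\left|\int_\Omega f g_1\, d\mu\right| \le L \int_\Omega |f|\, d\mu = \ln\!\left(\tfrac{7}{2}\, \mu(\Omega)\right) \|f\|_{L_1(\Omega, \mu)} ,
$$
which is exactly the second term in the claimed inequality.

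The core of the argument is the pointwise estimate
$$
\mathcal{A}\big((s - L)_+\big) \le e^{-L}\, \mathcal{A}(s), \qquad s \ge 0 .
$$
For $s \le L$ the left-hand side is $\mathcal{A}(0) = 0$, and for $s > L$ the inequality is equivalent, after substituting $\mathcal{A}(t) = e^t - 1 - t$ and rearranging, to $(1 + s)(1 - e^{-L}) \ge L$, which follows from $1 + s > 1 + L$ together with the elementary bound $e^L \ge 1 + L$. Applying this with $s = |g(x)|$, integrating, and using the admissibility of $g$ gives
$$
\int_\Omega \mathcal{A}(|g_2|)\, d\mu = \int_\Omega \mathcal{A}\big((|g| - L)_+\big)\, d\mu \le e^{-L} \int_\Omega \mathcal{A}(|g|)\, d\mu \le e^{-L}\, \mu(\Omega) = \tfrac{2}{7} < 1 .
$$
By the definition \eqref{Luxemburg} of the Luxemburg norm this means $\|g_2\|_{(\mathcal{A}, \Omega, \mu)} \le 1$, and the strengthened H\"older inequality \eqref{h2} then yields
$$
\left|\int_\Omega f g_2\, d\mu\right| \le \|g_2\|_{(\mathcal{A}, \Omega, \mu)}\, \|f\|_{\mathcal{B}, \Omega, \mu} \le \|f\|_{\mathcal{B}, \Omega, \mu} .
$$

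Adding the two estimates gives $\left|\int_\Omega f g\, d\mu\right| \le \|f\|_{\mathcal{B}, \Omega, \mu} + \ln\!\left(\frac{7}{2}\, \mu(\Omega)\right) \|f\|_{L_1(\Omega, \mu)}$ for every admissible $g$, and taking the supremum over all such $g$ completes the proof.

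I expect the only real obstacle to be hitting upon this decomposition and proving the pointwise inequality $\mathcal{A}((s - L)_+) \le e^{-L}\mathcal{A}(s)$: it is precisely the super-exponential growth of $\mathcal{A}$ that converts the additive shift $s \mapsto (s - L)_+$ into a multiplicative gain $e^{-L}$, and this gain has to be large enough to absorb the factor $\mu(\Omega)$ built into the averaged constraint $\int_\Omega \mathcal{A}(|g|)\, d\mu \le \mu(\Omega)$ --- which is why the truncation level must be (at least) of order $\ln \mu(\Omega)$. The bounded part contributes the $L\|f\|_{L_1}$ term with no effort, and the remaining steps (triangle inequality, passage to the supremum, and the routine measurability/integrability checks) require nothing beyond what has already been recorded above.
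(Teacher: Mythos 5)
Your proof is correct. Note that the paper itself does not prove this lemma---it merely cites \cite[Lemma 2.5]{Eugene}---so there is no in-paper argument to compare against, but the structure you give (truncating the test function $g$ at the logarithmic level $L = \ln\bigl(\tfrac72\mu(\Omega)\bigr)$, bounding the truncated part by $L\,\|f\|_{L_1}$, and using the superexponential growth of $\mathcal{A}$ to show the tail $g_2$ satisfies $\int_\Omega\mathcal{A}(|g_2|)\,d\mu\le\tfrac27<1$, hence $\|g_2\|_{(\mathcal{A},\Omega,\mu)}\le 1$ and the strengthened H\"older \eqref{h2} applies) is the natural route to this estimate and the key pointwise bound $\mathcal{A}\bigl((s-L)_+\bigr)\le e^{-L}\mathcal{A}(s)$ is verified correctly via $(1+s)(1-e^{-L})>(1+L)\cdot\frac{L}{1+L}=L$ for $s>L$.
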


\begin{lemma}\label{direction}
Let $\mu$ be a $\sigma$-finite Borel measure on $\mathbb{R}^2$ such that $\mu(\{x\})= 0,\;\forall x\in\mathbb{R}^2$. Let
\begin{equation}\label{sim}
\Sigma := \left\{\theta \in [0, \pi)\;:\;\exists\;l_{\theta}\mbox{ such that }\mu(l_{\theta}) > 0\right\},
\end{equation}
where $l_{\theta}$ is a line in $\mathbb{R}^2$ in the direction of the vector $(\cos\theta, \sin\theta)$. Then $\Sigma$ is at most countable.
\end{lemma}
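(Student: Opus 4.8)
The plan is to reduce to the case of a finite measure and then to show that, for every fixed positive threshold, only finitely many directions can carry a line whose $\mu$-measure exceeds that threshold; a countable union over a sequence of thresholds tending to $0$ then finishes the argument. The crucial use of the hypothesis $\mu(\{x\}) = 0$ is that it forces any two \emph{non-parallel} lines to have $\mu$-measure-zero intersection, since they meet in at most one point.

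First I would reduce to the case $\mu(\mathbb{R}^2) < \infty$. As $\mu$ is $\sigma$-finite, pick Borel sets $E_n \uparrow \mathbb{R}^2$ with $\mu(E_n) < \infty$ and set $\nu := \sum_{n} 2^{-n}(1 + \mu(E_n))^{-1}\, \mu(\,\cdot\, \cap E_n)$. Then $\nu(\mathbb{R}^2) \le 1$, $\nu(\{x\}) = 0$ for all $x$, and $\nu$ has exactly the same null sets as $\mu$ (by continuity from below), so $\mu(l_\theta) > 0 \iff \nu(l_\theta) > 0$ and the set $\Sigma$ is unchanged if $\mu$ is replaced by $\nu$. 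Hence we may assume $M := \mu(\mathbb{R}^2) < \infty$.

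Next, for $k \in \mathbb{N}$ put $\Sigma_k := \{\theta \in [0,\pi) : \exists\, l_\theta \text{ with } \mu(l_\theta) > 1/k\}$, so that $\Sigma = \bigcup_{k \ge 1} \Sigma_k$. I claim $\Sigma_k$ has at most $kM$ elements. Indeed, let $\theta_1, \dots, \theta_m \in \Sigma_k$ be distinct and choose lines $l_{\theta_i}$ with $\mu(l_{\theta_i}) > 1/k$. Distinct elements of $[0,\pi)$ correspond to non-parallel directions, so for $i \ne j$ the set $l_{\theta_i} \cap l_{\theta_j}$ consists of at most one point and therefore $\mu(l_{\theta_i} \cap l_{\theta_j}) = 0$. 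By the Bonferroni (inclusion--exclusion) inequality,
$$
M \ge \mu\Big(\bigcup_{i=1}^m l_{\theta_i}\Big) \ge \sum_{i=1}^m \mu(l_{\theta_i}) - \sum_{1 \le i < j \le m} \mu(l_{\theta_i} \cap l_{\theta_j}) = \sum_{i=1}^m \mu(l_{\theta_i}) > \frac{m}{k} ,
$$
whence $m < kM$. Thus each $\Sigma_k$ is finite and $\Sigma$ is at most countable.

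There is no genuine obstacle here: the argument is elementary, and the only point needing (routine) care is the reduction to a finite measure and the observation that the atomless-at-points hypothesis makes every pairwise intersection term in the Bonferroni bound vanish. If one prefers to avoid invoking Bonferroni, the same estimate $\mu(\bigcup_{i=1}^m l_{\theta_i}) = \sum_{i=1}^m \mu(l_{\theta_i})$ follows by an immediate induction on $m$ from $\mu(A \cup B) = \mu(A) + \mu(B) - \mu(A \cap B)$ together with $\mu\big(l_{\theta_{m+1}} \cap \bigcup_{i \le m} l_{\theta_i}\big) = 0$.
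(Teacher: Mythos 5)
Your proof is correct and follows essentially the same approach as the paper: decompose $\Sigma$ by a threshold on $\mu(l_\theta)$, observe that non-parallel lines meet in at most one point (hence a $\mu$-null set, by the atomless hypothesis), and conclude that only finitely many lines with measure above the threshold can fit in a finite-measure space. The only cosmetic differences are that the paper reduces to finite mass by intersecting with the balls $B(0,N)$ and argues by contradiction, while you pass to an equivalent finite measure $\nu$ and give a direct cardinality bound $m < kM$.
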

\begin{proof}
Let $$
\Sigma_N := \left\{\theta \in [0, \pi)\;:\;\exists\; \l_{\theta} \mbox{ such that } \mu(l_{\theta}\cap B(0, N)) > 0\right\},
$$
where $B(0, N)$ is the ball of radius $N\in\mathbb{N}$ centred at $0$. Then
$$
\Sigma = \underset{N\in\mathbb{N}}\cup \Sigma_N.
$$It is now enough to show that $\Sigma_N$ is at most countable for $\forall N\in\mathbb{N}$. Suppose that $\Sigma_N$ is uncountable. Then there exists a $\delta > 0$ such that
$$
\Sigma_{N,\delta} := \left\{\theta \in [0, \pi)\;:\;\exists\; \l_{\theta} \mbox{ such that }\mu(l_{\theta}\cap B(0, N)) > \delta\right\}
$$ is infinite.
Otherwise, $\Sigma_N = \underset{n\in\mathbb{N}}\cup \Sigma_{N, \frac{1}{n}}$ would have been finite or countable. Now take
distinct $\theta_1,..., \theta_k,... \in\Sigma_{N, \delta}$. Then
$$
\mu\left(l_{\theta_k}\cap B(0, N)\right) > \delta,\;\;\;\forall k\in\mathbb{N}\,.
$$
Since $l_{\theta_j}\cap l_{\theta_k} ,\;j\neq k$ contains at most one point, then $$\mu\left(\underset{j \neq k}\cup (l_{\theta_j}\cap l_{\theta_k})\right) = 0.$$
Let
$$
\tilde{l}_{\theta_k}:= l_{\theta_k}\backslash\underset{j \neq k}\cup (l_{\theta_j}\cap l_{\theta_k})\,.
$$
Then $\tilde{l}_{\theta_j}\cap\tilde{l}_{\theta_k} = \emptyset,\;j \neq k$ and $\tilde{l}_{\theta_k}\cap B(0, N) \subset B(0, N)$.
So
$$
\sum_{k\in\mathbb{N}} \mu\left( \tilde{l}_{\theta_k}\cap B(0, N)\right) =
\mu\left(\underset{k\in\mathbb{N}}\cup (\tilde{l}_{\theta_k}\cap B(0, N))\right) \le \mu\left(B(0, N)\right) < \infty\,.
$$
But
$$
\mu\left(\tilde{l}_{\theta_k}\cap B(0, N)\right) = \mu\left(l_{\theta_k}\cap B(0, N)\right) \ge \delta ,
$$
which implies
$$
\sum_{k\in\mathbb{N}} \mu\left( \tilde{l}_{\theta_k}\cap B(0, N)\right) \geq \underset{k\in\mathbb{N}}\sum\delta = \infty\,.
$$
This contradiction means that $\Sigma_N$ is at most countable for each $N\in\mathbb{N}$. Hence $\Sigma$ is at most countable.
\end{proof}

\begin{corollary}\label{cor-direct}
There exists $\theta_0 \in [0, \pi/2)$ such that $\theta_0 \notin \Sigma$ and $\theta_0 + \frac{\pi}{2} \notin \Sigma$.
\end{corollary}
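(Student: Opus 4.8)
The plan is to deduce this at once from Lemma~\ref{direction}. By that lemma, $\Sigma$ is at most countable, hence so is $\Sigma \cap [0, \pi/2)$. The second requirement, $\theta_0 + \tfrac{\pi}{2} \notin \Sigma$, I would rewrite as $\theta_0 \notin \Sigma - \tfrac{\pi}{2} := \{\theta - \tfrac{\pi}{2} : \theta \in \Sigma\}$; since translation by $-\tfrac{\pi}{2}$ is a bijection of $\mathbb{R}$, the set $\Sigma - \tfrac{\pi}{2}$ is again at most countable, and therefore so is its intersection with $[0, \pi/2)$.

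First I would introduce the set of ``forbidden'' directions $B := \left(\Sigma \cup (\Sigma - \tfrac{\pi}{2})\right) \cap [0, \tfrac{\pi}{2})$, which by the previous paragraph is at most countable. Since the interval $[0, \pi/2)$ is uncountable, the complement $[0, \pi/2) \setminus B$ is nonempty, and any $\theta_0$ chosen from it satisfies $\theta_0 \notin \Sigma$ and $\theta_0 + \tfrac{\pi}{2} \notin \Sigma$, which is exactly the claim. I would also note, for bookkeeping, that for $\theta_0 \in [0, \pi/2)$ one has $\theta_0 + \tfrac{\pi}{2} \in [\pi/2, \pi) \subset [0, \pi)$, so no reduction modulo $\pi$ is needed and the condition $\theta_0 + \tfrac{\pi}{2} \in \Sigma$ is meant literally with respect to the parametrization of directions by $[0, \pi)$ used in \eqref{sim}.

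There is essentially no obstacle here; the proof is a one-line pigeonhole argument. The only point requiring a moment's care is the parametrization of (unoriented) lines by $[0, \pi)$, which is precisely why the shift by $\tfrac{\pi}{2}$ keeps us inside the parameter interval. Equivalently, one could phrase the conclusion measure-theoretically: a countable set has Lebesgue measure zero, so $\Sigma \cup (\Sigma - \tfrac{\pi}{2})$ is a null set while $[0, \pi/2)$ has positive measure, which again yields the existence of an admissible $\theta_0$.
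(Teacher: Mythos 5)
Your argument is correct and is essentially identical to the paper's proof: both observe that $\Sigma - \tfrac{\pi}{2}$ is at most countable because $\Sigma$ is, and then pick $\theta_0 \in [0,\pi/2)\setminus(\Sigma\cup(\Sigma-\tfrac{\pi}{2}))$, which is nonempty since a countable set cannot cover an uncountable interval. The extra remark about the parametrization of directions by $[0,\pi)$ is a harmless clarification that the paper leaves implicit.
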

\begin{proof}
The set
$$
\Sigma - \frac{\pi}{2} := \left\{ \theta - \frac{\pi}{2} \;: \theta\in \Sigma\right\}
$$
is at most countable. This implies that there exists
$$
\theta_0 \in [0, \pi/2)\setminus\left(\Sigma \cup (\Sigma - \frac{\pi}{2})\right) .
$$
Thus $\theta_0, \theta_0 + \frac{\pi}{2}\notin \Sigma$.
\end{proof}

Let $Q$ be an arbitrary unit square with its sides in the directions determined by $\theta_0$ and $\theta_0 + \frac{\pi}{2}$ in Corollary \ref{cor-direct}.  For a given $x\in\overline{Q}$ and $t > 0$, let $Q_x(t)$ be the closed square centred at $x$ with sides of length $t$ parallel to those of $Q$.

\begin{lemma}[Cf. Lemma 4 in \cite{Sol}]\label{measlemma2}
Suppose that $\Psi$ satisfies the $\Delta_2$-condition (see \eqref{global}). Then for every $f \in L_{\Psi}(Q, \mu)$, the function
$t \longmapsto \mathcal{J}(t) :=\|f\|^{\textrm{(\textrm{av})}}_{\Psi, Q_x(t), \mu}$ is continuous and $\mathcal{J}(0+) = 0$.
\end{lemma}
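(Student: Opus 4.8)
The plan rests on two structural facts that I would establish first. (1)~$\mathcal J$ is non-decreasing: for $0<s\le t$ and any $g$ with $\int_{Q_x(s)}\Phi(|g|)\,d\mu\le\mu(Q_x(s))$, extending $g$ by $0$ to $Q_x(t)$ keeps $\int_{Q_x(t)}\Phi(|g|)\,d\mu\le\mu(Q_x(s))\le\mu(Q_x(t))$ while $\int_{Q_x(t)}fg\,d\mu=\int_{Q_x(s)}fg\,d\mu$, so $\mathcal J(s)\le\mathcal J(t)$; moreover $\mathcal J(t)<\infty$ by Corollary~\ref{avequiv} since $f\in L_\Psi$ and $\mu(Q_x(t))<\infty$. (2)~This is the point of orienting the squares by the $\theta_0$ of Corollary~\ref{cor-direct}: every side of $Q_x(t)$ lies on a line in direction $\theta_0$ or $\theta_0+\tfrac{\pi}{2}$, hence (as $\theta_0,\theta_0+\tfrac{\pi}{2}\notin\Sigma$) on a $\mu$-null line, so $\mu(\partial Q_x(t))=0$ for every $t>0$, and likewise $\mu(\{x\})=0$. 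From (2) I would deduce: (i) $t\mapsto\mu(Q_x(t))$ is continuous on $(0,\infty)$ and tends to $\mu(\{x\})=0$ as $t\to0+$ (decreasing squares are $\mu$-finite and decrease to $Q_x(t)$; increasing squares increase to the open square $\operatorname{int}Q_x(t)$, which differs from $Q_x(t)$ by a $\mu$-null set); and (ii) for any $t_n\to t\ge0$ with $t_n>0$, setting $Q_x(0):=\{x\}$, one has $\mathbf 1_{Q_x(t_n)\triangle Q_x(t)}\to0$ $\mu$-a.e. (Throughout, $f$ is regarded as extended by $0$ off $Q$, so that $\mathcal J(t)$ makes sense for every $t>0$.)

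The next and principal ingredient is an absolute-continuity estimate: \emph{if $E_n$ are measurable and $\mathbf 1_{E_n}\to0$ $\mu$-a.e., then $\|f\|_{\Psi,E_n,\mu}\to0$.} Here is where the $\Delta_2$-condition~\eqref{global} is used: it guarantees $\Psi(\lambda|f|)\in L^1(Q,\mu)$ for every $\lambda>0$ (because $f\in L_\Psi$ already gives $\int_Q\Psi(|f|)\,d\mu<\infty$ and $\Delta_2$ lets one absorb the factor $\lambda$), whence dominated convergence yields $\int_{E_n}\Psi(\lambda|f|)\,d\mu\to0$; once this is $\le1$, the Luxemburg norm $\|f\|_{(\Psi,E_n,\mu)}\le1/\lambda$, so $\|f\|_{\Psi,E_n,\mu}\le2/\lambda$ by~\eqref{Luxemburgequiv}, and letting $\lambda\to\infty$ finishes it.

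I would then prove the quantitative comparison: for $0<t_1\le t_2$ with $\mu(Q_x(t_1))>0$, decompose any $g$ admissible for $\mathcal J(t_2)$ over $Q_x(t_1)$ and $R:=Q_x(t_2)\setminus Q_x(t_1)$. On $Q_x(t_1)$, $g$ is admissible for $\|\cdot\|^{(\mathrm{av}),\,\mu(Q_x(t_2))/\mu(Q_x(t_1))}_{\Psi,Q_x(t_1),\mu}$ (see~\eqref{OrlAverage*}), which Lemma~\ref{lemma8} bounds by $\frac{\mu(Q_x(t_2))}{\mu(Q_x(t_1))}\mathcal J(t_1)$; on $R$, $\int_R\Phi(|g|)\,d\mu\le\mu(Q_x(t_2))$, so~\eqref{LuxNormPre},~\eqref{Luxemburgequiv} and the H\"older inequality~\eqref{Holder} bound that part by $2\max\{1,\mu(Q_x(t_2))\}\,\|f\|_{\Psi,R,\mu}$. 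Hence
\[
0\le\mathcal J(t_2)-\mathcal J(t_1)\le\Big(\tfrac{\mu(Q_x(t_2))}{\mu(Q_x(t_1))}-1\Big)\mathcal J(t_1)+2\max\{1,\mu(Q_x(t_2))\}\,\|f\|_{\Psi,Q_x(t_2)\setminus Q_x(t_1),\mu}.
\]
To finish: fix $t>0$. If $\mu(Q_x(t))>0$, let $s\to t$ and apply the inequality with $\{t_1,t_2\}=\{\min(s,t),\max(s,t)\}$; the ratio $\to1$ by (i), $\mathcal J(t_1)\le\mathcal J(t)<\infty$ and $\max\{1,\mu(Q_x(t_2))\}$ stay bounded, and $\|f\|_{\Psi,Q_x(s)\triangle Q_x(t),\mu}\to0$ by the absolute-continuity estimate and (ii); so $|\mathcal J(s)-\mathcal J(t)|\le\mathcal J(t_2)-\mathcal J(t_1)\to0$. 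If $\mu(Q_x(t))=0$, then $Q_x(t)$ is $\mu$-null, so $\mathcal J(t)=0$, $\mathcal J(s)=0$ for $0<s\le t$, and for $s>t$ the same H\"older bound over $Q_x(s)\setminus Q_x(t)$ gives $\mathcal J(s)\le2\max\{1,\mu(Q_x(s))\}\,\|f\|_{\Psi,Q_x(s)\setminus Q_x(t),\mu}\to0$. Either way $\mathcal J$ is continuous at $t$. Lastly, $\mathcal J(t)\le\max\{1,\mu(Q_x(t))\}\,\|f\|_{\Psi,Q_x(t),\mu}$ by Corollary~\ref{avequiv}, and as $t\to0+$, $\mu(Q_x(t))\to0$ while $\mathbf 1_{Q_x(t)}\to0$ $\mu$-a.e. (since $\bigcap_{t>0}Q_x(t)=\{x\}$ and $\mu(\{x\})=0$), so $\mathcal J(t)\to0$, i.e.\ $\mathcal J(0+)=0$.

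The main obstacle is the absolute-continuity estimate on the shrinking frames $Q_x(t_2)\setminus Q_x(t_1)$: without $\Delta_2$ it simply fails, and it relies on the frames collapsing onto a $\mu$-null set — precisely why the squares must be oriented so their sides miss the at most countably many $\mu$-charged directions of Lemma~\ref{direction}. Were a side allowed to carry positive $\mu$-mass, $t\mapsto\mu(Q_x(t))$, and with it $\mathcal J$, would genuinely jump. The rest — tracking the normalising factor $\mu(Q_x(t))$ through Lemma~\ref{lemma8} and Corollary~\ref{avequiv} — is routine once continuity of $t\mapsto\mu(Q_x(t))$ is in hand.
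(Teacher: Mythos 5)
Your proposal is correct and follows essentially the same strategy as the paper: bound the difference $\mathcal{J}(t_2) - \mathcal{J}(t_1)$ by a ``core'' term controlled by the ratio $\mu(Q_x(t_2))/\mu(Q_x(t_1))$ plus a ``frame'' term over $Q_x(t_2)\setminus Q_x(t_1)$ controlled by $\Delta_2$-absolute continuity, both of which vanish because the chosen directions ensure $\mu(\partial Q_x(t)) = 0$ and hence continuity of $t \mapsto \mu(Q_x(t))$. The only cosmetic differences are that you invoke Lemma~\ref{lemma8} for the core term where the paper rescales the test function by $\rho = \mu(Q_x(t_0))/\mu(Q_x(t))$, and you reprove the absolute-continuity estimate via dominated convergence where the paper cites \cite[Theorems 9.4 and 10.3]{KR}.
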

\begin{proof}
Let $t > t_0 > 0$. Take any measurable function $g$ on $Q_x(t)$ such that
$$
\int_{Q_x(t)}\Phi(|g(x)|)\,d\mu \le \mu(Q_x(t))
$$
and consider $h_0 := \rho g$, where
$\rho = \frac{\mu(Q_x(t_0))}{\mu(Q_x(t))} \le 1$. Then
\begin{eqnarray*}
\int_{Q_x(t_0)}\Phi(|h_0|)\,d\mu  = \int_{Q_x(t_0)}\Phi(|\rho g|)\,d\mu \le \int_{Q_x(t)}\Phi(|\rho g|)\,d\mu\\ = \rho\int_{Q_x(t)}\Phi(| g|)\,d\mu \le \rho\mu(Q_x(t)) = \mu(Q_x(t_0)).
\end{eqnarray*}
Hence
\begin{eqnarray*}
0 &\le& \|f\|^{\textrm{(av)}}_{\Psi, Q_x(t), \mu} - \|f\|^{\textrm{(av)}}_{\Psi, Q_x(t_0), \mu}\\&=& \textrm{sup}\left\{ \left|\int_{Q_x(t)}fg\;d\mu\right| : \int_{Q_x(t)}\Phi(|g|)\,d\mu \le \mu(Q_x(t))\right\}\\ &-& \textrm{sup}\left\{ \left|\int_{Q_x(t_0)}fh\;d\mu\right| : \int_{Q_x(t_0)}\Phi(|h|)\,d\mu \le \mu(Q_x(t_0))\right\}\\&\le& \textrm{sup}\left\{ \left|\int_{Q_x(t)}fg\;d\mu\right| : \int_{Q_x(t)}\Phi(|g|)\,d\mu \le \mu(Q_x(t))\right\}\\
&-& \textrm{sup}\left\{\rho \left|\int_{Q_x(t_0)}fg\;d\mu\right| : \int_{Q_x(t)}\Phi(|g|)\,d\mu \le \mu(Q_x(t))\right\}\\&\le& \textrm{sup}\left\{\left|\int_{Q_x(t)}fg\;d\mu\right|-  \rho\left|\int_{Q_x(t_0)}fg\;d\mu\right| : \int_{Q_x(t)}\Phi(|g(x)|)\,d\mu \le \mu(Q_x(t))\right\}\\&\le&\textrm{sup}\left\{\left|\int_{Q_x(t)\setminus Q_x(t_0)}fg\;d\mu\right| : \int_{Q_x(t)}\Phi(|g(x)|)\,d\mu \le \mu(Q_x(t))\right\} \\&+& (1 - \rho)\textrm{sup}\left\{\left|\int_{Q_x(t_0)}fg\;d\mu\right| : \int_{Q_x(t)}\Phi(|g(x)|)\,d\mu \le \mu(Q_x(t))\right\}.
\end{eqnarray*}

For every interval $I\subseteq Q$ parallel to the sides of $Q$, $\mu(I) = 0$.
Then $\mu \left(Q_x(t)\setminus Q_x(t_0)\right) \longrightarrow \mu(\partial Q_x(t_0)) = 0$ as $t \longrightarrow t_0$.

Using  the H\"older inequality (see \eqref{h2}), we get
\begin{eqnarray*}
&&\sup\left\{\left|\int_{Q_x(t)\setminus Q_x(t_0)}fg\;d\mu\right| : \ \int_{Q_x(t)}\Phi(|g(x)|)\,d\mu \le \mu(Q_x(t))\right\} \\
&&\le \sup_{\int_{Q_x(t)}\Phi(|g(x)|)\,d\mu \le \mu(Q_x(t))} \|f\|_{\left(\Psi, Q_x(t)\setminus Q_x(t_0),\mu\right)}
 \|g\|_{\Phi,Q_x(t)\setminus Q_x(t_0), \mu} \\
&&\le \|f\|_{\left(\Psi,Q_x(t)\setminus Q_x(t_0),\mu\right)} 2 \max\{1, \mu(Q_x(t))\}
\end{eqnarray*}(see \eqref{Luxemburgequiv} and \eqref{LuxNormPre}).
Since $\Psi$ satisfies the $\Delta_2$ condition, it follows from \cite[Theorems 9.4 and 10.3]{KR} that
$$
\lim_{t \longrightarrow t_0} \|f\|_{\left(\Psi,Q_x(t)\setminus Q_x(t_0),\mu\right)} = 0.
$$
Further,
$$
\rho = \frac{\mu(Q_x(t_0))}{\mu(Q_x(t))} = 1 - \frac{\mu\left(Q_x(t)\setminus Q_x(t_0)\right)}{\mu(Q_x(t))}\longrightarrow 1 \;\;\textrm{as} \;\;
t \longrightarrow t_0.
$$
Hence
$$
(1 - \rho) \sup\left\{\left|\int_{Q_x(t_0)}fg\;d\mu\right| : \int_{Q_x(t)}\Phi(|g(x)|)\,d\mu \le \mu(Q_x(t))\right\} \longrightarrow 0
$$
as $t \longrightarrow t_0$. The case $t_0 > t > 0$ is proved similarly.

Finally, the equality $\mathcal{J}(0+) = 0$ follows from \cite[Theorems 9.4 and 10.3]{KR}.
\end{proof}

We will use the following pair of mutually complementary $N$-functions
\begin{equation}\label{thepair}
\mathcal{A}(s) = e^{|s|} - 1 - |s| , \ \ \ \mathcal{B}(s) = (1 + |s|) \ln(1 + |s|) - |s| , \ \ \ s \in \mathbb{R} .
\end{equation}

\begin{definition}
 Let $\mu$ be a positive Radon measure on $\mathbb{R}^2$. We say the measure $\mu$ is Ahlfors regular of dimension $\alpha \in (0, 2]$
 if there exist positive constants $c_0$ and $c_1$ such that
\begin{equation}\label{Ahlfors}
c_0r^{\alpha} \le \mu(B(x, r)) \le c_1r^{\alpha}\;
\end{equation}
for all $0< r \le \mathrm{diam(supp}\,\mu)$ and all $x\in$ $\mathrm{supp}\, \mu$, where $B(x, r)$ is a ball of radius $r$ centred at $x$ and the constants $c_0$ and $c_1$ are independent of the balls.
\end{definition}
If the measure $\mu$ is $\alpha$-dimensional Ahlfors regular, then it is equivalent to the $\alpha$-dimensional Hausdorff measure (see, e.g., \cite[Lemma 1.2]{Dav} ).
If $\mathrm{supp}\,\mu$ is unbounded, \eqref{Ahlfors} is satisfied for all $ r > 0$.  For more details and examples of unbounded Ahlfors regular sets, see for example  \cite{Dav, HUT, STR}.

Suppose that $\mu$ is the usual one-dimensional Lebesgue measure on a horizontal or a vertical line.
Then \eqref{Ahlfors} holds with $\alpha = 1$. This implies $\mu(I) \neq 0$ for every nonempty subinterval $I$ of that line.
Hence the need of Lemma \ref{direction} and Corollary \ref{cor-direct} for the validity of Lemma \ref{measlemma2} in this case.

Throughout the paper, we consider integrals and Orlicz norms with respect to $\mu$ over closed rather than open sets.
This is because the $\mu$ measure of the boundary of a set may well be positive.

\section{The main result}\label{mainresult}

Let $\mathcal{H}$ be a Hilbert space and let $\mathbf{q}$ be a Hermitian form with a domain
$\mbox{Dom}\, (\mathbf{q}) \subseteq \mathcal{H}$. Set
\begin{equation}\label{hermitian}
N_- (\mathbf{q}) := \sup\left\{\dim \mathcal{L}\, | \  \mathbf{q}[u] < 0, \,
\forall u \in \mathcal{L}\setminus\{0\}\right\} ,
\end{equation}
where $\mathcal{L}$ denotes a linear subspace of $\mbox{Dom}\, (\mathbf{q})$.  The number $N_- (\mathbf{q})$ is called the Morse index of $\mathbf{q}$. If $\mathbf{q}$
is the quadratic form of a self-adjoint operator $A$ with no essential spectrum in $(-\infty, 0)$, then
by the variational principle,
$N_- (\mathbf{q})$ is the number of negative eigenvalues of $A$ repeated according to their
multiplicity (see, e.g., \cite[S1.3]{BerShu} or \cite[Theorem 10.2.3]{BirSol}).

Assume without loss of generality that $0\in \mbox{supp}\,\mu$ and $\mathrm{diam(supp}\,\mu) > 1$. Let
$$
J_n = [e^{2^{n - 1}}, e^{2^n}],\;\;n > 0\;\;\;J_0 := [e^{-1}, e],\;\;\;J_n = [e^{-2^{|n|}}, e^{-2^{|n|-1}}],\;\;n < 0,
$$ and
\begin{equation}\label{meaeqn4}
G_n := \int_{|x| \in J_n}|\ln|x||V(x)\,d\mu(x), \;\;\; n\neq 0,\;\;\;\; G_0 := \int_{|x| \in J_0}V(x)d\mu(x).
\end{equation}

If $\mbox{supp}\, \mu$ is bounded, there exists $m \in \mathbb{N}$ such that
$$
\left(2\frac{c_1}{c_0}\right)^{\frac{m}{\alpha}}  < \mathrm{diam(supp}\,\mu) \le \left(2\frac{c_1}{c_0}\right)^{\frac{m + 1}{\alpha}} .
$$
Then there exists  $\eta$ such that
\begin{equation}\label{etam}
1 < \eta \le \left(2\frac{c_1}{c_0}\right)^{\frac{1}{\alpha}}  \ \ \mbox{ and } \ \
 \mathrm{diam(supp}\,\mu) = \eta \left(2\frac{c_1}{c_0}\right)^{\frac{m}{\alpha}} .
\end{equation}
If $\mbox{supp}\, \mu$ is unbounded, we just take $\eta =1$. Then we set
\begin{equation}
\label{anc0c1}
Q_n := \left\{x\in\mathbb{R}^2 \;:\; \eta\left(2\frac{c_1}{c_0}\right)^{\frac{n -1}{\alpha}} \le |x| \le
\eta\left(2\frac{c_1}{c_0}\right)^{\frac{n }{\alpha}}\right\}, \; n\in\mathbb{Z}
\end{equation}
and
 \begin{equation}\label{Dn}
 \mathcal{D}_n := \|V\|^{(\textrm{av})}_{\mathcal{B}, Q_n, \mu}
\end{equation}
(see \eqref{thepair}).

 Define the operator \eqref{2} by its quadratic form
\begin{eqnarray*}
&& \mathcal{E}_{V\mu,\mathbb{R}^2}[w] := \int_{\mathbb{R}^2}|\nabla w (x)|^2\,dx - \int_{\mathbb{R}^2}V(x)|w(x)|^2\,d\mu(x)\,,\\
&& \mathrm{Dom}(\mathcal{E}_{V\mu, \mathbb{R}^2}) =  W^1_2(\mathbb{R}^2)\cap L^2(\mathbb{R}^2, Vd\mu).
\end{eqnarray*}
Let $N_-(\mathcal{E}_{V\mu, \mathbb{R}^2})$ denote the number of negative eigenvalues of \eqref{2} counted according to their multiplicities,
i.e. the Morse index of $\mathcal{E}_{V\mu, \mathbb{R}^2}$ defined by \eqref{hermitian}.
Then we have the following result.
\begin{theorem}\label{mainthm}
Let $\mu$ be a positive Radon  measure on $\mathbb{R}^2$ that is Ahlfors regular and $V\ge 0$. Then
there exist constants $A > 0$ and $c > 0$ such that
\begin{equation}\label{maineqn}
N_-(\mathcal{E}_{V\mu, \mathbb{R}^2}) \le 1 + 4 \sum_{G_n > 1/4}  \sqrt{G_n} + A\sum_{\mathcal{D}_n > c} \mathcal{D}_n\,.
\end{equation}
\end{theorem}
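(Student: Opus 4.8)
The plan is to follow the Birman–Laptev–Solomyak (BLS) variational scheme described in Section \ref{variational}, reducing the count $N_-(\mathcal{E}_{V\mu,\mathbb{R}^2})$ to a sum of a one-dimensional (radial) contribution and a non-radial contribution, and then estimating each of these separately. First I would pass to polar coordinates and split the quadratic form by means of the orthogonal decomposition $L^2 = L^2_{\rm rad}\oplus L^2_{\rm non-rad}$, using the fact that on the subspace of functions depending only on $|x|$ the Dirichlet form together with the potential gives a one-dimensional Schrödinger operator on $\mathbb{R}_+$ (equivalently on $\mathbb{R}$ after the substitution $r = e^s$), whose potential is the push-forward measure $|\ln|x||\,V\,d\mu$. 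For this radial part I would invoke the one-dimensional estimates of Schrödinger operators with measure-valued potentials (the results of \cite{KS}, which extend \cite{Sol2} to measures with atoms and fractional-dimensional support), yielding a bound of the form $1 + 4\sum_{G_n > 1/4}\sqrt{G_n}$, where the $G_n$ are exactly the weighted integrals over the dyadic-exponential shells $J_n$ defined in \eqref{meaeqn4}. The value $1/4$ and the constant $4$ come from the sharp Bargmann-type/Solomyak constant in the one-dimensional estimate applied shell by shell.

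For the non-radial part I would use the standard device of decomposing $\mathbb{R}^2\setminus\{0\}$ into the homothetic annuli $Q_n$ from \eqref{anc0c1} and estimating the number of negative eigenvalues contributed by each $Q_n$ after projecting out the radial component. On a single annulus one uses a Poincaré/Sobolev-type inequality: modulo constants, $\int_{Q_n}|\nabla w|^2\,dx$ controls $\|w\|^2$ on $Q_n$ against the potential via an appropriate endpoint trace inequality (Maz'ya's Theorem \ref{measthm2}), and here the $L\log L$ Orlicz norm $\mathcal{D}_n = \|V\|^{(\rm av)}_{\mathcal{B},Q_n,\mu}$ enters precisely because the borderline Sobolev embedding in two dimensions is into an exponential Orlicz class $L_{\mathcal{A}}$, dual to $L_{\mathcal{B}}$; the Hölder inequality for Orlicz spaces \eqref{Holder} then produces $\mathcal{D}_n$ as the natural quantity. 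To make this work on annuli centred in, versus away from, $\mathrm{supp}\,\mu$, and to keep track of the dependence of constants only on $c_1/c_0$ and $\alpha$ (so that the scaling $x\mapsto \lambda x$, which changes $\mu$ to a rescaled Ahlfors regular measure with the same ratio, is harmless), I would cover each annulus by finitely many congruent squares $Q_x(t)$ with sides in the good directions $\theta_0,\theta_0+\pi/2$ from Corollary \ref{cor-direct}; Lemma \ref{direction} guarantees such directions exist, Lemma \ref{measlemma2} gives continuity of $t\mapsto\|V\|^{(\rm av)}_{\mathcal{B},Q_x(t),\mu}$ which is needed to choose these squares with controlled Orlicz norm, and the subadditivity Lemma \ref{lemma7} lets one sum the per-square contributions back up to $\mathcal{D}_n$. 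Summing over $n$ and using $\sum_n$ over only those indices with $\mathcal{D}_n > c$ (the remaining annuli each contributing no negative eigenvalue, since there the form is nonnegative by the smallness of the Orlicz norm) gives the term $A\sum_{\mathcal{D}_n > c}\mathcal{D}_n$.

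Finally I would add the radial and non-radial counts, absorbing the $+1$ (the unavoidable ground state) into the radial estimate, to obtain \eqref{maineqn}. The main obstacle, and where most of the work lies, is the non-radial estimate on a single annulus with the correct, measure-independent constant: one must verify that Maz'ya's trace theorem \ref{measthm2} applies to the Ahlfors regular measure $\mu$ (this is the role of the asymptotic lemmas relegated to the Appendix and of Lemma \ref{meascor}), handle the dichotomy between squares meeting $\mathrm{supp}\,\mu$ and those that do not, and control how the trace constant transforms under the non-homogeneous scaling that relates different annuli — showing it depends only on $c_1/c_0$ and $\alpha$. The continuity statement in Lemma \ref{measlemma2}, which replaces the corresponding ingredient of \cite{Sol} that fails for fractional-dimensional $\mu$, is the technical linchpin that makes the square-selection argument go through.
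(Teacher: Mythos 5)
Your proposal follows the paper's proof essentially step for step: the Birman--Laptev--Solomyak splitting \eqref{meaeqn1}, the exponential change of variables and the one-dimensional measure-potential estimate from \cite{KS} for the radial part giving $1 + 4\sum_{G_n>1/4}\sqrt{G_n}$, and for the non-radial part the annular decomposition, Maz'ya's endpoint trace theorem (Theorem~\ref{measthm2} via Lemma~\ref{meascor}), the covering-by-squares argument in the good directions of Corollary~\ref{cor-direct} with Lemma~\ref{measlemma2} supplying the continuity needed to select squares of equal Orlicz norm, Lemma~\ref{lemma7} to sum them back up, and the scale-invariance of the constants in $c_1/c_0$ and $\alpha$ (Lemma~\ref{measlemma6}). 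Two small imprecisions in your description, neither of which affects the soundness of the plan: the covering squares are not congruent but have variable side lengths chosen so that each carries the same averaged Orlicz norm (which is precisely why Lemma~\ref{measlemma2} is needed), and the one-dimensional operator's potential is the pushforward $d\nu(e^t)$ of $V\,d\mu$ under $x\mapsto|x|$, while the weight $|\ln|x||$ enters only in the definition of the shell integrals $G_n$, not in the potential itself.
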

\begin{corollary}\label{maincor}
Under the conditions of the above theorem, there exists a constant $B > 0$ such that
\begin{equation}\label{maineqncor}
N_-(\mathcal{E}_{V\mu, \mathbb{R}^2}) \le 1 + B\left(\int_{\mathbb{R}^2} V(x) \ln(1 + |x|)\, d\mu(x) + \|V\|_{\mathcal{B}, \mathbb{R}^2,\, \mu}\right) .
\end{equation}
\end{corollary}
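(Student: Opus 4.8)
The plan is to derive \eqref{maineqncor} from Theorem \ref{mainthm} by bounding each of the two sums in \eqref{maineqn} by a constant (depending only on $c_0,c_1,\alpha$) times $\int_{\mathbb{R}^2}V(x)\ln(1+|x|)\,d\mu(x)+\|V\|_{\mathcal{B},\mathbb{R}^2,\mu}$. Write $r_n:=\eta(2c_1/c_0)^{n/\alpha}$, so that $Q_n=\{r_{n-1}\le|x|\le r_n\}$ by \eqref{anc0c1}, and $\beta:=(2c_1/c_0)^{1/\alpha}>1$; recall $\eta\in\big((2c_1/c_0)^{-1/\alpha},1\big]$ by \eqref{etam}. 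For the first sum, $\sqrt{G_n}\le2G_n$ whenever $G_n>1/4$, so it is enough to estimate $\sum_nG_n$; since the intervals $J_n$ have pairwise disjoint interiors with union $(0,+\infty)$, \eqref{meaeqn4} gives
$$\sum_nG_n\le2\int_{|x|\ge e}V\ln|x|\,d\mu+\int_{e^{-1}\le|x|\le e}V\,d\mu+2\int_{0<|x|<e^{-1}}V\ln\tfrac1{|x|}\,d\mu.$$
The first integral is $\le\int_{\mathbb{R}^2}V\ln(1+|x|)\,d\mu$. The second is over a set of finite $\mu$-measure $\le c_1e^\alpha$ (by \eqref{Ahlfors}), hence $\le\mathcal{B}^{-1}(1)\max\{1,c_1e^\alpha\}\,\|V\|_{\mathcal{B},\mathbb{R}^2,\mu}$ by Lemma \ref{L1Orl} and Corollary \ref{avequiv}. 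For the third, \eqref{Ahlfors} yields $\int_{|x|<e^{-1}}|x|^{-\lambda}\,d\mu<\infty$ for every $\lambda\in(0,\alpha)$, so $x\mapsto\ln\tfrac1{|x|}$ lies in $L_{\mathcal{A}}\big(\{|x|<e^{-1}\},\mu\big)$ with a norm bounded in terms of $c_1,\alpha$ only, and \eqref{Holder} bounds the third integral by that norm times $\|V\|_{\mathcal{B},\mathbb{R}^2,\mu}$.

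For the second sum I split the index set into $\{n\le0\}$ and $\{n\ge1\}$. The annuli $Q_n$ with $n\le0$ fill $\overline{B(0,\eta)}\subseteq\overline{B(0,1)}$, of $\mu$-measure $\le c_1$; the even-indexed ones are pairwise disjoint, and so are the odd-indexed ones, so Lemma \ref{lemma7} (applied to the finite subfamilies, then passing to the limit) together with the monotonicity of $\Omega\mapsto\|V\|^{\rm (av)}_{\mathcal{B},\Omega,\mu}$ gives $\sum_{n\le0}\mathcal{D}_n\le2\|V\|^{\rm (av)}_{\mathcal{B},\overline{B(0,\eta)},\mu}\le2\max\{1,c_1\}\,\|V\|_{\mathcal{B},\mathbb{R}^2,\mu}$ by Corollary \ref{avequiv}. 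For $n\ge1$, combining Lemma \ref{avequivB} (when $\mu(Q_n)>1$) with Corollary \ref{avequiv} (when $\mu(Q_n)\le1$) gives
$$\mathcal{D}_n\le\|V\|_{\mathcal{B},Q_n,\mu}+\ln\big(\tfrac72\max\{1,\mu(Q_n)\}\big)\,\|V\|_{L_1(Q_n,\mu)}.$$
On $Q_n$ one has $\ln(1+|x|)\ge\ln(1+r_{n-1})$, which is $\ge\tfrac12 n\ln\beta$ for all $n$ larger than some $n_1=n_1(c_0,c_1,\alpha)$, whereas $\ln\big(\tfrac72\max\{1,\mu(Q_n)\}\big)\le C_1+\alpha n\ln\beta$ since $\mu(Q_n)\le c_1r_n^\alpha$; hence for $n\ge n_1$ the coefficient of $\|V\|_{L_1(Q_n,\mu)}$ is $\le C_2\ln(1+|x|)$ throughout $Q_n$, and summing (each point lying in at most two $Q_n$) yields $\sum_{n\ge1}\ln\big(\tfrac72\max\{1,\mu(Q_n)\}\big)\|V\|_{L_1(Q_n,\mu)}\le C_3\big(\int_{\mathbb{R}^2}V\ln(1+|x|)\,d\mu+\|V\|_{\mathcal{B},\mathbb{R}^2,\mu}\big)$, the finitely many terms with $1\le n<n_1$ being handled via Lemma \ref{L1Orl} and Corollary \ref{avequiv} since they live in the fixed ball $\overline{B(0,r_{n_1})}$.

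What remains, and what I expect to be the main obstacle, is $\sum_{n\ge1}\|V\|_{\mathcal{B},Q_n,\mu}$. This I would treat as in the case $\alpha=2$ of \cite{Eugene}: group the indices $n\ge1$ into dyadic blocks on which $\ln(1+|x|)$ is comparable to a single value and whose cardinalities grow geometrically; use the elementary bound $\sum_j\lambda_j\|V\|_{\mathcal{B},\Omega_j,\mu}\le4\|V\|_{\mathcal{B},\bigcup_j\Omega_j,\mu}$, valid for pairwise disjoint $\Omega_j$ and $\lambda_j>0$ with $\sum_j\lambda_j\le1$ (it follows by testing $\|V\|_{\mathcal{B},\bigcup_j\Omega_j,\mu}$ against $\sum_j\lambda_jg_j$ with each $g_j$ near-extremal for $\|V\|_{\mathcal{B},\Omega_j,\mu}$, using the convexity of $\mathcal{A}$ and \eqref{LuxNormPre}, \eqref{Luxemburgequiv}); and play the geometric growth of the block sizes against the linear growth (in the dyadic exponent) of the weight, so that $\int V\ln(1+|x|)\,d\mu$ absorbs the loss produced by splitting. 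The only feature peculiar to the present setting is that every constant here must depend on $\mu$ only through $c_0,c_1,\alpha$; this is automatic because rescaling $Q_n$ to the fixed annulus $\{1\le|y|\le\beta\}$ transports $\mu$ into a measure again satisfying \eqref{Ahlfors} with the \emph{same} $c_0,c_1$ — a point already built into the choice \eqref{anc0c1}, \eqref{etam} of the $Q_n$. Assembling the four estimates and inserting them into \eqref{maineqn} gives \eqref{maineqncor}.
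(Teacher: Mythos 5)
Your overall decomposition mirrors the paper's: write $\sqrt{G_n}\le 2G_n$, split $\sum_n G_n$ into the three regimes $|x|<e^{-1}$, $e^{-1}\le|x|\le e$, $|x|>e$, split $\sum_n\mathcal D_n$ into $n\le0$ and $n\ge1$, use Lemma~\ref{lemma7} with Corollary~\ref{avequiv} for $n\le 0$, and reduce the $n\ge1$ part via Lemma~\ref{avequivB} to two sums, one with the weight $\ln\bigl(\frac72\mu(Q_n)\bigr)\lesssim\ln(1+|x|)$ and the other being $\sum_{n\ge1}\|V\|_{\mathcal B,Q_n,\mu}$. All of those pieces are carried out correctly, and the $\sigma$-finiteness, the $O(\eta)$ bookkeeping, and the remark that rescaling to $Q_1$ preserves $(c_0,c_1,\alpha)$ are fine.

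The gap is exactly the step you flag as ``the main obstacle'': bounding $\sum_{n\ge1}\|V\|_{\mathcal B,Q_n,\mu}$. The sketch --- dyadic blocks $B_k$ of size $\sim 2^k$ on which $\ln(1+|x|)\sim 2^k$, combined with the bound $\sum_j\lambda_j\|V\|_{\mathcal B,\Omega_j,\mu}\le\|V\|_{\mathcal B,\cup_j\Omega_j,\mu}$ for $\sum\lambda_j\le1$ --- does not close. That bound is true (and with constant $1$, not $4$: take $\lambda_n=|B_k|^{-1}$ and $g=\sum\lambda_n g_n\mathbf 1_{Q_n}$, use convexity of $\mathcal A$), but applied to a block it only gives $\sum_{n\in B_k}\|V\|_{\mathcal B,Q_n,\mu}\le|B_k|\,\|V\|_{\mathcal B,\mathbb R^2,\mu}\sim 2^k\|V\|_{\mathcal B,\mathbb R^2,\mu}$, which diverges when summed over $k$; there is no obvious way to ``play the geometric growth against the weight'' here, since both the block cardinality and the weight grow like $2^k$. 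The paper's proof of this step is Lemma~\ref{reversetr} (modelled on \cite[Lemma 8.1]{Eugene}), which is \emph{not} a dyadic blocking. After normalising $\|V\|_{(\mathcal B,\mathbb R^2\setminus B(0,1),\mu)}=1$, set $\kappa_n:=\|V\|_{(\mathcal B,Q_n,\mu)}$ and $\alpha_n:=\int_{Q_n}\mathcal B(V)\,d\mu$; then $\kappa_n\le1$, $\sum\alpha_n\le2$, and Lemma~\ref{elem} applied to $1=\int_{Q_n}\mathcal B(V/\kappa_n)\,d\mu$ gives $\kappa_n\le4\alpha_n+\bigl(1+2\ln\tfrac1{\kappa_n}\bigr)\|V\|_{L_1(Q_n,\mu)}$. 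One then splits the index set by the threshold $\kappa_n\le1/n^2$ versus $\kappa_n>1/n^2$: the first group sums to at most $\pi^2/6$, and in the second group $\ln(1/\kappa_n)<2\ln n$, so the $L_1$ coefficient is $O(\ln n)\lesssim\ln(2+\ln|x|)\lesssim\ln(1+|x|)$ on $Q_n$ and the weighted $L_1$ norm absorbs the rest. That threshold trick is the genuinely nontrivial ingredient and is missing from your proposal; without it the argument does not go through.
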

The proofs of the Theorem and the Corollary are given in sections \ref{proof} and \ref{corproof} respectively.

\section{The Birman-Laptev-Solomyak method}\label{variational}
Our description of the Birman-Solomyak method of estimating $N_- (\mathcal{E}_V)$ follows
\cite{BL, Eugene, Sol, Sol2}.

Let $(r, \theta)$ denote the polar coordinates in $\mathbb{R}^2$, $r\in\mathbb{R}_+,\;\theta\in[-\pi, \pi]$ and
\begin{equation}\label{radial}
w_{\mathcal{R}}(r) := \frac{1}{2\pi}\int_{-\pi}^{\pi}w(r, \theta)d\theta,\;\;\;w_{\mathcal{N}}(r, \theta) := w(r,\theta) - w_{\mathcal{R}}(r),
\end{equation}
where $w\in C(\mathbb{R}^2\setminus\{0\})$.
Then
\begin{equation}\label{fN0}
\int_{-\pi}^\pi w_{\mathcal{N}}(r, \theta)\, d\theta = 0 , \ \ \ \forall r > 0 ,
\end{equation}
and it is easy to see that
$$
\int_{\mathbb{R}^2} w_{\mathcal{R}} v_{\mathcal{N}}\, dy = 0 , \ \ \ \forall
w, v \in C_0^{\infty}\left(\mathbb{R}^2\setminus\{0\}\right) .
$$
Hence
$w \mapsto Pw := w_{\mathcal{R}}$
extends to an orthogonal projection $P : L^2\left(\mathbb{R}^2\right) \to
L^2\left(\mathbb{R}^2\right)$.

Using the representation of the gradient in polar coordinates one gets
\begin{eqnarray*}
&& \int_{\mathbb{R}^2} \nabla w_{\mathcal{R}} \nabla v_{\mathcal{N}}\, dy =
\int_{\mathbb{R}^2} \left(\frac{\partial w_{\mathcal{R}}}{\partial r}
\frac{\partial v_{\mathcal{N}}}{\partial r} + \frac1{r^2}
\frac{\partial w_{\mathcal{R}}}{\partial \theta}
\frac{\partial v_{\mathcal{N}}}{\partial \theta}\right)\, dy \\
&& = \int_{\mathbb{R}^2} \frac{\partial w_{\mathcal{R}}}{\partial r}
\frac{\partial v_{\mathcal{N}}}{\partial r}\, dy =
\int_{\mathbb{R}^2} \left(\frac{\partial w}{\partial r}\right)_{\mathcal{R}}
\left(\frac{\partial v}{\partial r}\right)_{\mathcal{N}}\, dy = 0 , \ \ \
\forall w, v \in C^\infty_0\left(\mathbb{R}^2\setminus\{0\}\right) .
\end{eqnarray*}
Hence $P : W^1_2\left(\mathbb{R}^2\right) \to W^1_2\left(\mathbb{R}^2\right)$ is also
an orthogonal projection.

Since
\begin{eqnarray*}
&& \int_{\mathbb{R}^2} |\nabla w|^2\, dx = \int_{\mathbb{R}^2} |\nabla w_{\mathcal{R}}|^2\, dx +
\int_{\mathbb{R}^2} |\nabla w_{\mathcal{N}}|^2\, dx , \\
&& \int_{\mathbb{R}^2} V |w|^2\, d\mu(x) \le 2 \int_{\mathbb{R}^2} V|w_{\mathcal{R}}|^2\, d\mu(x) +
2\int_{\mathbb{R}^2} V |w_{\mathcal{N}}|^2\, d\mu(x) ,
\end{eqnarray*} we have
\begin{equation}\label{meaeqn1}
N_-(\mathcal{E}_{V\mu, \mathbb{R}^2}) \le N_-(\mathcal{E}_{\mathcal{R},2V\mu}) + N_-(\mathcal{E}_{\mathcal{N},2V\mu})
\end{equation} where $\mathcal{E}_{\mathcal{R},2V\mu}$ and $\mathcal{E}_{\mathcal{N}, 2V\mu}$ are the restrictions of the form $\mathcal{E}_{2V\mu, \mathbb{R}^2}$ to  $PW^1_2(\mathbb{R}^2)$ and $(I -P)W^1_2(\mathbb{R}^2)$ respectively. Therefore to estimate $N_-\left(\mathcal{E}_{V\mu,\mathbb{R}^2}\right)$, it is sufficient to find estimates for $N_-\left(\mathcal{E}_{\mathcal{R},2V\mu}\right)$ and $N_-\left(\mathcal{E}_{\mathcal{N}, 2V\mu}\right)$. \\

On the space $PW^1_2(\mathbb{R}^2)$, a simple exponential change of variables reduces the problem to a one-dimensional Schr\"odinger operator, which provides an estimate for $N_-\left(\mathcal{E}_{\mathcal{R},2V\mu}\right)$ in terms of weighted $L^1$ norms of $V$ (see \eqref{meaeqn3}, \eqref{meaeqn5}). Theorem \ref{measthm3} shows that this estimate is optimal in a sense (see also \eqref{sqrtweak}).

On the space $(I - P)W^1_2(\mathbb{R}^2)$, one gets an estimate for $N_-\left(\mathcal{E}_{\mathcal{N},2V\mu}\right)$ in terms of
Orlicz norms of $V$ (see \eqref{meaeqn6*} and \eqref{Dn}).  The variational principle (see, e.g., \cite[Lemma 3.2]{KS}) implies that
\begin{equation}\label{variat}
N_-\left(\mathcal{E}_{\mathcal{N},2V\mu}\right) \le \sum_{n\in\mathbb{Z}}N_-\left(\mathcal{E}_{\mathcal{N},2V\mu, Q_n}\right),
\end{equation}
where $Q_n$ are the annuli defined in \eqref{anc0c1},
\begin{eqnarray*}
&& \mathcal{E}_{\mathcal{N}, 2V\mu, Q_n}[w] := \int_{Q_n}|\nabla w(x)|^2\,dx - 2\int_{Q_n}V(x)|w(x)|^2\,d\mu(x),\\
&& \mathrm{Dom}\;\left(\mathcal{E}_{\mathcal{N},2V\mu, Q_n}\right) = \left\{w\in (I - P)W^1_2(Q_n)\cap L^2\left(Q_n, Vd\mu\right)\right\}.
\end{eqnarray*}

The main reason for introducing the space $(I - P)W^1_2(\mathbb{R}^2)$ is that
\begin{equation}\label{Omegan0}
\int_{Q_n} w(x)\,dx = 0, \;\;\;\forall w\in (I - P)W^1_2(Q_n)
\end{equation}
(cf. \eqref{fN0}), which allows one to use the Poincar\'e inequality and ensures that not all terms in the right-hand side of \eqref{variat} are necessarily
greater than or equal to 1.

The Ahlfors condition \eqref{Ahlfors} allows one to obtain estimates for $N_-\left(\mathcal{E}_{\mathcal{N}, 2V\mu, Q_n}\right)$ from those for
$N_-\left(\mathcal{E}_{\mathcal{N}, 2V\mu, Q_1}\right)$ by scaling $x \longmapsto x\left(2\frac{c_1}{c_0}\right)^{\frac{n -1}{\alpha}}$.
So it is sufficient to find an estimate for $N_-\left(\mathcal{E}_{\mathcal{N}, 2V\mu, Q_1}\right)$.

\section{Proof of Theorem \ref{mainthm}}\label{proof}
We need to find an estimate for the right-hand side of \eqref{meaeqn1}.
We start with the first term. Let $I$ be an arbitrary interval in $\mathbb{R}_+$. Define a measure on $\mathbb{R}_+$ by
\begin{equation}\label{1dmeas}
\nu(I) := \int_{|x|\in I}V(x)\,d\mu(x).
\end{equation}
Then (see \eqref{radial})
$$
\int_{\mathbb{R}^2}|w_{\mathcal{R}}(x)|^2 V(x)\,d\mu(x) = \int_{\mathbb{R}_+}|w_R(r)|^2d\nu(r).
$$

 Let $w \in PW^1_2(\mathbb{R}^2)$, $r = e^t$, $v(t) := w(x) = w_{\mathcal{R}}(r)$  (see \eqref{radial}). Then
 $$
 \int_{\mathbb{R}^2}|\nabla w(x)|^2dx = 2\pi\int_{\mathbb{R}}|v'(t)|^2dt
 $$
 and
\begin{eqnarray*}
\int_{\mathbb{R}^2}V(x) |w(x)|^2d\mu(x) &=& \int_{\mathbb{R}_+}|w_{\mathcal{R}}(r)|^2d\nu(r)= \int_{\mathbb{R}}|w_{\mathcal{R}}(e^t)|^2d\nu(e^t)\\&=&\int_{\mathbb{R}}|v(t)|^2\,d\nu(e^t).
\end{eqnarray*}
Let
\begin{equation}\label{meaeqn2}
\mathcal{G}_n := \frac{1}{2\pi}\int_{\mathbf{ I}_n}|t|\,d\nu(e^t), \;\;\; n\neq 0,\;\;\;\; \mathcal{G}_0 := \frac{1}{2\pi}\int_{\mathbf{ I}_0}d\nu(e^t) ,
\end{equation}
where
$$
\mathbf{ I}_n := [2^{n - 1}, 2^n], \ n > 0 ,   \ \mathbf{ I}_0 := [-1, 1] ,  \ \
\mathbf{ I}_n := [-2^{|n|}, -2^{|n| - 1}], \ n < 0.
$$
Then
\begin{equation}\label{meaeqn3}
N_-(\mathcal{E}_{\mathcal{R}, 2\nu}) \le  1 + 7.61 \sum_{\mathcal{G}_n > 0.046}  \sqrt{\mathcal{G}_n}\,,
\end{equation}
where
\begin{eqnarray*}
&& \mathcal{E}_{\mathcal{R}, 2\nu}[v] := \int_{\mathbb{R}}|v'(t)|^2\,dt - \int_{\mathbb{R}}|v(t)|^2\,d\nu(e^t),\\
&& \mathrm{Dom}(\mathcal{E}_{\mathcal{R}, 2\nu}) = W^1_2(\mathbb{R})\cap L^2(\mathbb{R}, d\nu)
\end{eqnarray*}
(see \cite{KS}).
It follows from \eqref{meaeqn4}, \eqref{1dmeas} and \eqref{meaeqn2} that $G_n = 2\pi\mathcal{G}_n$ and thus \eqref{meaeqn3} implies
\begin{equation}\label{meaeqn5}
N_-(\mathcal{E}_{\mathcal{R}, 2V\mu}) \le  1 + 4 \sum_{G_n > 1/4}  \sqrt{G_n}.
\end{equation}

Now, it remains to find an estimate for the second term in the right-hand side of \eqref{meaeqn1} (see \eqref{meaeqn6*}). We begin by stating
some auxiliary  results.\\

Let $\varphi$ be a nonnegative increasing function on $[0, +\infty)$ such that $t\varphi(t^{-1})$ decreases and tends to zero as $t \longrightarrow\infty$. Further, suppose
\begin{equation}\label{maz2}
\int_u^{+\infty}t\sigma(t) dt \le cu\sigma(u),
\end{equation} for all $u > 0$, where
\begin{equation}\label{sigma}
\sigma(v) := v\varphi\left(\frac{1}{v}\right)
\end{equation}
and
$c$ is a positive constant.

\begin{theorem}\label{measthm2}{\rm  \cite[Theorem 11.8]{Maz}}
Let $\Psi$ and $\Phi$ be mutually complementary N-functions and let $\mu$ be a positive Radon measure on $\mathbb{R}^2$.
Let $\varphi$ be the inverse function of $t \mapsto t\Phi^{-1}(t^{-1})$ and suppose it satisfies the above conditions.
Then the best, possibly infinite, constant $A_1$ in
\begin{equation}\label{maz1}
\|w^2\|_{\Psi, \mathbb{R}^2, \mu} \le A_1\|w\|^2_{W^1_2(\mathbb{R}^2)} , \ \ \ \forall w\in W^1_2(\mathbb{R}^2)\cap C(\mathbb{R}^2)
\end{equation}
is equivalent to
\begin{equation}\label{maz3}
B_1 = \sup\left\{|\log r| \mu(B(x, r))\Phi^{-1}\left(\frac{1}{\mu(B(x, r))}\right) : \ x \in \mathbb{R}^2,\, 0 < r < \frac{1}{2}\right\} ,
\end{equation}
where $B(x, r)$ is a ball of radius $r$ centred at $x$.
\end{theorem}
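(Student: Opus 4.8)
Since the statement is quoted from \cite{Maz}, I only outline the argument. The plan is to route the equivalence through a capacitary reformulation. For a compact set $K\subset\mathbb{R}^2$ let
$$
\mathrm{Cap}(K):=\inf\left\{\|w\|_{W^1_2(\mathbb{R}^2)}^2\ :\ w\in C_0^\infty(\mathbb{R}^2),\ w\ge 1\ \text{on a neighbourhood of}\ K\right\}
$$
be the inhomogeneous (Bessel-type) capacity generated by the norm on the right of \eqref{maz1}, and set
$$
A_*:=\sup\left\{\frac{\|\chi_K\|_{\Psi,\mathbb{R}^2,\mu}}{\mathrm{Cap}(K)}\ :\ K\subset\mathbb{R}^2\ \text{compact},\ \mathrm{Cap}(K)>0\right\}.
$$
I would prove $A_1\asymp A_*\asymp B_1$, with implied constants depending only on $\Psi$ and $\Phi$.

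\emph{Step 1: $A_1\asymp A_*$.} For $A_*\le A_1$ take, for a given compact $K$, a function $w\in C_0^\infty(\mathbb{R}^2)$ with $w\ge 1$ near $K$ and $\|w\|_{W^1_2}^2\le\mathrm{Cap}(K)+\varepsilon$; since $w^2\ge\chi_K$ pointwise and the norm \eqref{Orlicz} is monotone, $\|\chi_K\|_{\Psi,\mu}\le\|w^2\|_{\Psi,\mu}\le A_1(\mathrm{Cap}(K)+\varepsilon)$, and letting $\varepsilon\to0$ gives $A_*\le A_1$. For the reverse inequality, given $w\in W^1_2\cap C$, put $N_j:=\{x:|w(x)|\ge 2^j\}$; then $w^2\le 3\sum_{j\in\mathbb{Z}}2^{2j}\chi_{N_j}$ pointwise, so by the triangle inequality for \eqref{Orlicz},
$$
\|w^2\|_{\Psi,\mu}\le 3\sum_{j}2^{2j}\|\chi_{N_j}\|_{\Psi,\mu}\le 3A_*\sum_{j}2^{2j}\mathrm{Cap}(N_j).
$$
Maz'ya's capacitary strong-type inequality $\sum_j 2^{2j}\mathrm{Cap}(N_j)\le C\|w\|_{W^1_2(\mathbb{R}^2)}^2$ then gives $A_1\le 3CA_*$.

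\emph{Step 2: $A_*\asymp B_1$.} In the conformally critical case $n=p=2$ one has $\mathrm{Cap}(\overline{B(x,r)})\asymp|\log r|^{-1}$ for $0<r<1/2$: the upper bound comes from the explicit logarithmic cutoff equal to $1$ on $B(x,r)$, vanishing outside $B(x,1)$ and linear in $\log|y-x|$ in between (its $W^1_2$-norm is $\tfrac{2\pi}{|\log r|}+O(\log^{-2}r)$), while the lower bound is the classical energy of the equilibrium potential of $\overline{B(x,r)}$ relative to $\overline{B(x,1)}$. Testing \eqref{Orlicz} with $g=\Phi^{-1}\!\big(1/\mu(B(x,r))\big)\chi_{B(x,r)}$ yields $\|\chi_{B(x,r)}\|_{\Psi,\mu}\ge\mu(B(x,r))\Phi^{-1}\!\big(1/\mu(B(x,r))\big)$, so taking $K=\overline{B(x,r)}$ in the definition of $A_*$ gives $A_*\gtrsim B_1$ (and, combined with Step~1, also the necessity bound $B_1\lesssim A_1$ directly). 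For the opposite bound one must show $\|\chi_K\|_{\Psi,\mu}\le CB_1\,\mathrm{Cap}(K)$ for every compact $K$: the idea is to cover $K$ by balls $B(x_i,r_i)$, use $\|\chi_K\|_{\Psi,\mu}\le\sum_i\|\chi_{B(x_i,r_i)}\|_{\Psi,\mu}\lesssim\sum_i\mu(B(x_i,r_i))\Phi^{-1}(1/\mu(B(x_i,r_i)))\le B_1\sum_i|\log r_i|^{-1}$, and then, via a Besicovitch/Vitali selection together with a Frostman-type argument for the $2$-capacity, to choose the cover so that $\sum_i|\log r_i|^{-1}\lesssim\mathrm{Cap}(K)$. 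Keeping control of the sum of Orlicz contents along the way is exactly where the hypotheses on $\varphi$ enter — that $\varphi$ is the inverse of $t\mapsto t\Phi^{-1}(t^{-1})$, that $t\varphi(t^{-1})$ decreases to $0$, and the Dini/reverse-doubling condition \eqref{maz2} on $\sigma=v\varphi(1/v)$ — since they make $t\mapsto t\Phi^{-1}(1/t)$ regular enough that passing from balls to general compacta costs only a bounded factor.

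\emph{Main obstacle.} Step~1 and the inequality $A_*\gtrsim B_1$ are routine (respectively, the capacitary strong-type inequality plus the lattice structure of the Orlicz norm, and a one-line test-function computation once the $2$-capacity of a ball is known). The real work is $A_*\lesssim B_1$: upgrading the \emph{a priori} condition \eqref{maz3}, stated only for balls, to the full capacitary condition, which needs the careful covering/summation argument indicated above and hinges on the precise $|\log r|$ behaviour of capacity special to $n=p=2$ and on the structural properties of $\varphi$ and $\sigma$. This is the step carried out in detail in \cite[Ch.~11]{Maz}.
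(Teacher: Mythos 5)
The paper does not prove this statement: Theorem \ref{measthm2} is quoted verbatim as a black-box result from \cite[Theorem 11.8]{Maz}, and no argument is supplied anywhere in the text. So there is no ``paper's own proof'' to compare against. Your sketch is a reasonable reconstruction of the line of attack in Maz'ya's book --- pass through a capacitary reformulation ($A_1\asymp A_*$ via the capacitary strong-type inequality and the dyadic layer-cake decomposition of $w^2$, then $A_*\asymp B_1$ via the $|\log r|^{-1}$ asymptotics of the $W^1_2$ capacity of a small disk and a ball-to-compacta covering argument) --- and you correctly single out $A_*\lesssim B_1$ as the step where the hypotheses on $\varphi$ and $\sigma$ actually do work. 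But since those two nontrivial inputs (the strong-type capacitary inequality and the covering/summation argument) are themselves taken as known, the proposal is a summary of the cited proof rather than an independent one; in the context of this paper that is exactly what the theorem's role warrants, as the authors themselves only cite it.
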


Let $G\subset\mathbb{R}^2$ be a bounded set with Lipschitz boundary.
Then there exists a bounded linear operator
\begin{equation}\label{opT}
T_G: W^1_2(G) \longrightarrow W^1_2(\mathbb{R}^2)
\end{equation}
such that
\begin{eqnarray*}
&& (T_Gw)|_G = w, \;\;\;\forall w \in W^1_2(G),   \\
&& T_Gw \in W^1_2(\mathbb{R}^2)\cap C(\mathbb{R}^2), \;\;\;\forall w \in W^1_2(G)\cap C\left(\overline{G}\right)
\end{eqnarray*}
(see \cite[Ch.VI, Section 3]{Stein}).

\begin{lemma}\label{meascor}{\rm  (cf.\;\cite[Corollary 11.8/2]{Maz})}
Consider the complementary N-functions $\mathcal{B}(t)= (1 + t)\ln(1 + t) - t$ and  $\mathcal{A}(t)= e^t - 1 - t$.
Let $G\subset\mathbb{R}^2$ be a bounded set with Lipschitz boundary. If a positive Radon measure $\mu$ on $\overline{G}$ satisfies the
following estimate for some $\alpha > 0$
\begin{equation}\label{ball}
\mu(B(x, r)) \le r^{\alpha}\;,\;\;\forall x\in \mathbb{R}^2 \;\;\; \textrm{and}\;\;\;  \forall r \in \left(0, \frac12\right)\,,
\end{equation}
then the inequality
$$
\|w^2\|_{\mathcal{A}, \overline{G}, \mu} \le A_1 \|T_G\|^2 \|w\|^2_{W^1_2(G)} , \ \ \ \forall w \in W^1_2(G)\cap C(\overline{G})
$$
holds with a constant $A_1$ (see \eqref{maz1}) depending only on $\alpha$.
\end{lemma}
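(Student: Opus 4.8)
The plan is to deduce the estimate from Maz'ya's trace theorem (Theorem~\ref{measthm2}), applied on the whole plane, and then to transplant it back to $G$ by means of the extension operator $T_G$. First I would view $\mu$ as a finite positive Radon measure on all of $\mathbb{R}^2$, concentrated on $\overline{G}$; it is finite because $\overline{G}$ is covered by finitely many balls of radius $\tfrac14$, each of finite $\mu$-measure by \eqref{ball}. The bound \eqref{ball} must be upgraded so that it holds for \emph{every} $x\in\mathbb{R}^2$, as required in \eqref{maz3}: if $B(x,r)\cap\overline{G}\neq\emptyset$, choose $y$ in the intersection, so that $B(x,r)\subseteq B(y,2r)$; this yields $\mu(B(x,r))\le(2r)^\alpha$ for $0<r<\tfrac14$, while for $\tfrac14\le r<\tfrac12$ one covers $B(y,2r)\subseteq B(y,1)$ by a bounded number $K$ of balls of radius $\tfrac14$ and gets $\mu(B(x,r))\le K4^{-\alpha}\le Kr^{\alpha}$. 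Hence $\mu(B(x,r))\le C(\alpha)r^{\alpha}$ for all $x\in\mathbb{R}^2$ and all $0<r<\tfrac12$, with $C(\alpha)$ depending only on $\alpha$.

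Next I would check that the complementary pair $\Psi=\mathcal{A}$, $\Phi=\mathcal{B}$ fits the hypotheses of Theorem~\ref{measthm2}, i.e. that $\varphi$, the inverse of $t\mapsto t\mathcal{B}^{-1}(t^{-1})$, is nonnegative and increasing, that $t\varphi(t^{-1})$ decreases to $0$, and that $\sigma(v)=v\varphi(1/v)$ satisfies \eqref{maz2}. This is a matter of the elementary asymptotics $\mathcal{B}(t)\sim t^2/2$ as $t\to0+$ and $\mathcal{B}(t)\sim t\ln t$ as $t\to+\infty$, whence $\mathcal{B}^{-1}(s)\sim\sqrt{2s}$ as $s\to0+$ and $\mathcal{B}^{-1}(s)\sim s/\ln s$ as $s\to+\infty$; consequently $\psi(t):=t\mathcal{B}^{-1}(t^{-1})\sim1/|\ln t|$ near $0$ and $\sim\sqrt{2t}$ near $+\infty$, so $\varphi(s)\sim e^{-1/s}$ near $0$ and $\sim s^2/2$ near $+\infty$, which gives $\sigma(v)\sim ve^{-v}\to0$ and $\int_u^{+\infty}t\sigma(t)\,dt\le c\,u\sigma(u)$; the monotonicity statements follow from these asymptotics together with the convexity of $\mathcal{B}$. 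These simple asymptotic facts are collected in the Appendix. Granting them, Theorem~\ref{measthm2} gives $A_1\le cB_1$, where $A_1$ is the best constant in \eqref{maz1} for the measure $\mu$, $B_1$ is as in \eqref{maz3}, and $c$ is absolute.

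It remains to bound $B_1$. Since $\mathcal{B}$ is convex with $\mathcal{B}(0)=0$, the function $s\mapsto\mathcal{B}^{-1}(s)/s$ is nonincreasing, i.e. $m\mapsto m\mathcal{B}^{-1}(1/m)$ is nondecreasing; hence with $m=\mu(B(x,r))\le C(\alpha)r^{\alpha}$ one gets $m\,\mathcal{B}^{-1}(1/m)\le C'(\alpha)\,r^{\alpha}\mathcal{B}^{-1}(r^{-\alpha})$. For $0<r<r_0(\alpha)$ the asymptotics give $\mathcal{B}^{-1}(r^{-\alpha})\le 2r^{-\alpha}/\ln(r^{-\alpha})=2r^{-\alpha}/(\alpha|\ln r|)$, so $|\log r|\,m\,\mathcal{B}^{-1}(1/m)\le C''(\alpha)$; for $r_0(\alpha)\le r<\tfrac12$ the function $|\log r|\,r^{\alpha}\mathcal{B}^{-1}(r^{-\alpha})$ is continuous on a compact interval and hence also bounded in terms of $\alpha$ alone. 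Thus $B_1\le C(\alpha)$ and therefore $A_1\le C(\alpha)$. Finally, for $w\in W^1_2(G)\cap C(\overline{G})$ the function $T_Gw$ lies in $W^1_2(\mathbb{R}^2)\cap C(\mathbb{R}^2)$ and, being continuous and equal to $w$ on $G$, equals $w$ on $\overline{G}$; since $\mu$ is carried by $\overline{G}$,
$$
\|w^2\|_{\mathcal{A},\overline{G},\mu}=\|(T_Gw)^2\|_{\mathcal{A},\mathbb{R}^2,\mu}\le A_1\|T_Gw\|^2_{W^1_2(\mathbb{R}^2)}\le A_1\|T_G\|^2\|w\|^2_{W^1_2(G)},
$$
which is the assertion. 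The one genuinely delicate point is the verification that $\varphi$ meets the hypotheses of Theorem~\ref{measthm2}, in particular the integral condition \eqref{maz2}; everything else is routine bookkeeping with the ball condition and the cancellation $|\log r|\cdot r^{\alpha}\cdot\mathcal{B}^{-1}(r^{-\alpha})=O_{\alpha}(1)$.
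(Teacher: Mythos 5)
Your proposal is correct and follows essentially the same route as the paper: extend $\mu$ by zero to $\mathbb{R}^2$, verify the hypotheses of Theorem~\ref{measthm2} via the asymptotics of $\mathcal{B}^{-1}$, bound $B_1$ in \eqref{maz3} by a quantity depending only on $\alpha$, and transplant the resulting trace inequality back to $G$ via the extension operator $T_G$. You are, if anything, a little more careful than the paper in two places: you spell out the covering argument that converts the ball condition \eqref{ball} at points of $\overline{G}$ into a bound $\mu(B(x,r))\le C(\alpha)r^\alpha$ valid for all $x\in\mathbb{R}^2$ (the paper asserts "it is easy to see that then \eqref{ball} holds for every $x$", which is literally false without adjusting the constant), and you note explicitly that $T_Gw=w$ on $\overline{G}$ by continuity, which is implicitly used in the paper's last displayed line.
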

\begin{proof}
 First let us check that the conditions of Theorem \ref{measthm2} are satisfied.
 Let $\varrho(t) := t\mathcal{B}^{-1}\left(\frac{1}{t}\right)$ and $\frac{1}{t} = \mathcal{B}(s)$. Then $\varrho(t) = \frac{s}{\mathcal{B}(s)}$. Since
 $\frac{d}{ds}\left(\frac{\mathcal{B}(s)}{s}\right) = -\frac{1}{s^2}\ln(1 + s) + \frac{1}{s}> 0$ for $s > 0$, the fraction $\frac{s}{\mathcal{B}(s)}$ is a decreasing function of $s$. It is also clear that $\frac{s}{\mathcal{B}(s)} \longrightarrow 0$ as $s \longrightarrow\infty$. Hence $\varrho(t)$ is an increasing function of $t$ and $\varrho(t) \longrightarrow 0$ as $t \longrightarrow 0+$. Further,
 \begin{equation}\label{*}
 \varrho(t) = t\mathcal{B}^{-1}\left(\frac{1}{t}\right) = \sqrt{2t}\left(1 + o(1)\right)\;\;\textrm{as}\;\;t\longrightarrow \infty
 \end{equation}
 and
 \begin{equation}\label{**}
   \varrho(t) = t\mathcal{B}^{-1}\left(\frac{1}{t}\right) = \frac{1}{\ln\frac{1}{t}}\left(1 + o(1)\right)\;\;\textrm{as}\;\;t\longrightarrow 0
 \end{equation}
 (see \eqref{larget} and \eqref{smallt} in Appendix).

 Let $\varphi(\tau) := \varrho^{-1}(\tau)$. Then $\varphi$ is an increasing function.
 Let $x = \varrho^{-1}\left(\frac{1}{t}\right)$. Then $x$ is a decreasing function of $t$, and $t = \frac{1}{\varrho(x)}$. Hence
 $$
 t\varphi(t^{-1}) = t \varrho^{-1}\left(\frac{1}{t}\right) = \frac{x}{\varrho(x)} = \frac{1}{\mathcal{B}^{-1}\left(\frac{1}{x}\right)}
 $$
 is a decreasing function of $t$.

 For small values of $\tau$,
\begin{equation}\label{***}
\varphi(\tau)= \tau e^{-\frac{1}{\tau}}e^{O(1)}
\end{equation}
(see \eqref{taut}, \eqref{large}). Hence
$$
t\varphi(t^{-1}) = e^{-t}e^{O(1)} \longrightarrow 0 \ \mbox{ as } \ t \longrightarrow \infty
$$
and (see \eqref{sigma})
 \begin{eqnarray*}
 \int_u^{+\infty}t \sigma(t)\,dt = \int_u^{+\infty}t^2\varphi\left(\frac{1}{t}\right)\,dt = \int_u^{+\infty} te^{-t}e^{O(1)}\,dt \\
 \le e^{O(1)} \int_u^{+\infty} te^{-t}\,dt  = e^{O(1)} (u +1)e^{-u} \le 2 e^{O(1)} u e^{-u} \le \\
 \le  e^{O(1)} u^2\varphi\left(\frac{1}{u}\right)
 = e^{O(1)} u\sigma(u)\;\;\;\textrm{as}\;\;\;u\longrightarrow +\infty
 \end{eqnarray*}
 (see \eqref{***}).

For large values of $\tau$,
$$
\varphi(\tau) = \frac{\tau^2}{2}(1 + o(1))
$$
(see \eqref{*}).
 Hence
 $$
 t\sigma(t)= t^2\varphi\left(\frac{1}{t}\right) = \frac{1}{2}\left(1 + o(1)\right)\;\;\;\textrm{as}\;\;\;t \longrightarrow 0+ ,
 $$
 $$
u\sigma (u) \;\longrightarrow \frac{1}{2} \ \mbox{ and } \
\int_u^{+\infty} t \sigma(t)\,dt \longrightarrow\;\textrm{constant} \;\;\;\textrm{as}\;\;\; u\longrightarrow 0+ .
 $$
 Thus $\varphi(\tau)$ satisfies  condition \eqref{maz2} for all values of $u$.

 Extend $\mu$ to $\mathbb{R}^2$ by $\mu(E) = 0$ for
$E = \mathbb{R}^2\setminus \overline{G}$. It is easy to see that then \eqref{ball} holds for every $x \in \mathbb{R}^2$, and
one has the following estimate for the constant $B_1$ in \eqref{maz3}
 \begin{eqnarray*}
  B_1 &=& \sup\left\{|\ln r|\mu(B(x, r))\mathcal{B}^{-1}\left(\frac{1}{\mu(B(x, r))}\right) | \ 0 < r < \frac{1}{2}\right\} \\
  &=& \sup_{0 < r < \frac{1}{2}}\frac{|\ln r|}{|\ln \mu(B(x, r))|}\left( 1 + o(1)\right) \le \textrm{const}\sup\frac{|\ln r|}{|\ln r^{\alpha}|}
  = \frac{\textrm{const}}{\alpha}
 \end{eqnarray*}
 (see \eqref{**} and \eqref{ball}).
 Thus one can take $A_1 \sim \frac{1}{\alpha}$ in \eqref{maz1}. It follows from Theorem \ref{measthm2} that
$$%
\|w^2\|_{\mathcal{A}, \overline{G}, \mu} = \|(T_Gw)^2\|_{\mathcal{A}, \mathbb{R}^2, \mu} \le A_1\|T_Gw\|^2_{W^1_2(\mathbb{R}^2)} \le
A_1 \|T_G\|^2 \|w\|^2_{W^1_2(G)}
$$
for all $w \in W^1_2(G)\cap C\left(\overline{G}\right)$.
 \end{proof}

 We will use the following notation:
\begin{equation}\label{ave}
w_E := \frac{1}{|E|}\int_E w(x)\,dx\,,
\end{equation}
where $E\subset\mathbb{R}^2$ is a set of a finite Lebesgue measure $|E|$.

\begin{lemma}\label{measlemma3*}
Let $G\subset\mathbb{R}^2$ be a bounded set with Lipschitz boundary
and
$\mu$ be a positive  Radon measure satisfying \eqref{ball}. Then there exists
a constant $A_2(G) > 0$ such that
for any $V\in L_{\mathcal{B}}(\overline{G}, \mu), \;V\geq 0$,
\begin{equation}\label{maz6}
\int_{\overline{G}} V|w(x)|^2d\mu(x) \le A_2(G)  \|V\|_{\mathcal{B}, \overline{G}, \mu}\int_{G}|\nabla w |^2dx
\end{equation}
 for all $w\in W^1_2(G)\cap C(\overline{G})$ with $w_G = 0$. One can take
 \begin{equation}\label{AG}
A_2(G) = A_1 \|T_G\|^2 \left(1 + C_G\right),
\end{equation}
where $A_1$ is the constant from Lemma \ref{meascor} and $C_G$ is the optimal constant in the Poincar\'e inequality
for $G$. In particular, in the case when $G = Q$ is a unit square with sides chosen in any direction, one can take
 \begin{equation}\label{A2pi}
A_2 = A_2(Q)= A_1 \|T_Q\|^2 \left(1 + \pi^{-2}\right),
\end{equation}
which depends only on $\alpha$.
\end{lemma}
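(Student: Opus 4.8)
The plan is to chain together three ingredients: the Hölder inequality in Orlicz spaces for the complementary pair $(\mathcal{B}, \mathcal{A})$ from \eqref{thepair}, the trace-type estimate of Lemma \ref{meascor}, and the Poincaré inequality on $G$.

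First, I would apply the Hölder inequality \eqref{Holder} with $\Psi = \mathcal{B}$ and $\Phi = \mathcal{A}$ to the functions $V$ and $|w|^2$, obtaining
\[
\int_{\overline{G}} V(x)|w(x)|^2\, d\mu(x) \le \|V\|_{\mathcal{B}, \overline{G}, \mu}\, \|w^2\|_{\mathcal{A}, \overline{G}, \mu} .
\]
The hypothesis $V \in L_{\mathcal{B}}(\overline{G}, \mu)$ together with $w^2 \in L_{\mathcal{A}}(\overline{G}, \mu)$ (which is part of the conclusion of Lemma \ref{meascor}) makes the right-hand side finite and the integral well defined.

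Next, since $G$ is bounded with Lipschitz boundary, $\mu$ satisfies \eqref{ball}, and $w \in W^1_2(G) \cap C(\overline{G})$, Lemma \ref{meascor} applies and gives
\[
\|w^2\|_{\mathcal{A}, \overline{G}, \mu} \le A_1 \|T_G\|^2 \|w\|^2_{W^1_2(G)} ,
\]
where $A_1$ depends only on $\alpha$ and $T_G$ is the extension operator from \eqref{opT}. Finally, because $w_G = 0$ and $G$ is a bounded Lipschitz domain, the Poincaré inequality gives $\|w\|^2_{L^2(G)} \le C_G \int_G |\nabla w|^2\, dx$ with $C_G$ the optimal Poincaré constant, whence
\[
\|w\|^2_{W^1_2(G)} = \|w\|^2_{L^2(G)} + \int_G |\nabla w|^2\, dx \le (1 + C_G) \int_G |\nabla w|^2\, dx .
\]
Combining the three displays yields \eqref{maz6} with $A_2(G) = A_1 \|T_G\|^2 (1 + C_G)$, which is \eqref{AG}.

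For the special case $G = Q$ a unit square, I would recall that the optimal constant in the Poincaré inequality $\|u - u_Q\|^2_{L^2(Q)} \le C_Q \int_Q |\nabla u|^2\, dx$ is $C_Q = \pi^{-2}$, the reciprocal of the first nonzero Neumann eigenvalue $\pi^2$ of $-\Delta$ on the unit square; this produces the value of $A_2$ in \eqref{A2pi}. Moreover, a unit square is, up to a rigid motion, independent of the direction chosen for its sides, so $\|T_Q\|$ is an absolute constant, and, combined with the $\alpha$-only dependence of $A_1$ from Lemma \ref{meascor}, this shows that $A_2$ depends only on $\alpha$. The argument is essentially routine; the one point deserving attention is keeping track of the dependence of the constants so that it reduces to $\alpha$ alone, which is already secured by Lemma \ref{meascor} and the rigid-motion invariance of the square.
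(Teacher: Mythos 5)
Your proposal is correct and follows exactly the route the paper takes: Orlicz H\"older inequality with the pair $(\mathcal{B},\mathcal{A})$, then Lemma~\ref{meascor}, then the Poincar\'e inequality, with the value $C_Q=\pi^{-2}$ obtained from the first positive Neumann eigenvalue $\pi^2$ of the unit square. Your remark that $\|T_Q\|$ is invariant under rigid motions of $Q$ is also the right justification for the $\alpha$-only dependence in \eqref{A2pi}.
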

\begin{proof}
The proof of \eqref{maz6}, \eqref{AG} follows from the H\"older inequality for Orlicz spaces (see \eqref{Holder}), Lemma \ref{meascor},
and the Poincar\'e inequality (see, e.g., \cite[Ch. IV, \S7, Sect. 2, Proposition 2]{DL2}). Formula \eqref{A2pi} follows
from the fact that the best constant in the Poincar\'e inequality equals $1/\lambda_2$, where $\lambda_2$ is the
smallest positive eigenvalue of the Neumann Laplacian (see \cite[Ch. IV, \S7, Sect. 2, Corollary 3]{DL2}) and that
the latter equals $\pi^2$ for the unit square $Q$ (see, e.g., \cite[Ch. VIII, \S2, Sect. 8, (2.398)]{DL3}).
\end{proof}

 \begin{lemma}\label{measlemma3}
Suppose $\mu$ satisfies \eqref{Ahlfors}. Let  $\Omega$ be a square centred in the support of $\mu$ with sides chosen in any direction.
Then there exists a square $\Omega_0 \subseteq \Omega$ with the same centre
such that for any $V\in L_{\mathcal{B}}\left(\overline{\Omega}, \mu\right)$,
$V \geq 0$ the following estimate hiolds
\begin{equation}\label{maz8}
\int_{\overline{\Omega}}V(y)|w(y)|^2d\mu(y)
\le A_2  \frac{c_1}{c_0}4^{\alpha} \|V\|^{(\textrm{av})}_{ \mathcal{B},\overline{\Omega}, \mu}\int_{\Omega}|\nabla w(y)|^2 dy
\end{equation}
for all $w\in W^1_2(\Omega)\cap C\left(\overline{\Omega}\right)$ with $w_{\Omega_0} = 0$ (see \eqref{ave}). Here, $A_2$ is the same
constant as in \eqref{A2pi}.
\end{lemma}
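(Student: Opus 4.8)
The plan is to reduce \eqref{maz8} to the unit-square case of Lemma \ref{measlemma3*} by an affine rescaling, keeping careful track of how every constant transforms; in the main case one may simply take $\Omega_0=\Omega$, and a strictly smaller centred sub-square is needed only in an edge case discussed at the end. Write $\Omega=Q_{x_0}(t)$ with centre $x_0\in\mathrm{supp}\,\mu$ and side length $t$, and let $Q_0$ be the unit square with sides in the same directions, centred at the origin. Define the affine homeomorphism $\xi:\overline{Q_0}\to\overline\Omega$ by $\xi(y):=x_0+ty$; it is a measurable isomorphism, so for $\tilde\mu:=\mu\circ\xi$, $\tilde V:=V\circ\xi$, $\tilde w:=w\circ\xi$ on $\overline{Q_0}$ I may use the change-of-variables formula and the scaling identities \eqref{scale}, \eqref{maz7} (note $\mu|_{\overline\Omega}$ is finite, $\mu$ being Radon). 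The first step is to verify that $c\tilde\mu$, with $c:=1/(c_1\,2^\alpha t^\alpha)$, satisfies \eqref{ball} on $\overline{Q_0}$. Indeed, for $y\in\overline{Q_0}$ and $0<r<\tfrac12$ one has $(c\tilde\mu)(B(y,r))=c\,\mu\big(B(x_0+ty,\,tr)\big)$; if this ball meets $\mathrm{supp}\,\mu$, pick $z$ in the intersection, note $B(x_0+ty,tr)\subseteq B(z,2tr)$, and apply the upper bound in \eqref{Ahlfors} at $z\in\mathrm{supp}\,\mu$ to get $\mu\big(B(x_0+ty,tr)\big)\le c_1(2tr)^\alpha=c_1\,2^\alpha t^\alpha r^\alpha$, whence $(c\tilde\mu)(B(y,r))\le r^\alpha$; if the ball misses $\mathrm{supp}\,\mu$ this is trivial. (The one point to watch is that \eqref{Ahlfors} is used here at radii $2tr<t$, which requires $t$ not to exceed a fixed multiple of $\mathrm{diam}(\mathrm{supp}\,\mu)$; see the last paragraph.)

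Now Lemma \ref{measlemma3*} applies to $G=Q_0$ with the measure $c\tilde\mu$ and the potential $\tilde V\ge0$, and yields, for every $\tilde w\in W^1_2(Q_0)\cap C(\overline{Q_0})$ with $(\tilde w)_{Q_0}=0$,
$$
\int_{\overline{Q_0}}\tilde V\,|\tilde w|^2\,d(c\tilde\mu)\ \le\ A_2\,\|\tilde V\|_{\mathcal B,\overline{Q_0},c\tilde\mu}\int_{Q_0}|\nabla\tilde w|^2\,dy ,
$$
with $A_2$ the constant \eqref{A2pi}, depending only on $\alpha$. It remains to translate each ingredient back to $\Omega$. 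The change of variables gives $\int_{\overline{Q_0}}\tilde V|\tilde w|^2\,d(c\tilde\mu)=c\int_{\overline\Omega}V|w|^2\,d\mu$; since $\xi$ scales planar lengths by $t$, the Dirichlet integral is scale invariant, $\int_{Q_0}|\nabla\tilde w|^2\,dy=\int_\Omega|\nabla w|^2\,dx$; and $(\tilde w)_{Q_0}=0$ is exactly $w_\Omega=0$, so I take $\Omega_0=\Omega$. For the norm, \eqref{maz7} (with the isomorphism $\xi$ and the constant there equal to our $c$) gives $\|\tilde V\|_{\mathcal B,\overline{Q_0},c\tilde\mu}\le\dfrac{c}{\min\{1,\,c\tilde\mu(\overline{Q_0})\}}\,\|V\|^{(\textrm{av})}_{\mathcal B,\overline\Omega,\mu}$. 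Using $B(x_0,t/2)\subseteq\overline\Omega\subseteq B(x_0,t)$ together with \eqref{Ahlfors} at $x_0\in\mathrm{supp}\,\mu$, one has $c_0\,2^{-\alpha}t^\alpha\le\mu(\overline\Omega)\le c_1 t^\alpha$, hence $c\tilde\mu(\overline{Q_0})=c\,\mu(\overline\Omega)\le 2^{-\alpha}<1$, so $\min\{1,c\tilde\mu(\overline{Q_0})\}=c\,\mu(\overline\Omega)$ and that fraction equals $1/\mu(\overline\Omega)\le 2^\alpha/(c_0 t^\alpha)$.

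Combining the three displayed facts and dividing by $c$ (so that $1/c=c_1\,2^\alpha t^\alpha$), the powers of $t$ cancel and one is left with
$$
\int_{\overline\Omega}V|w|^2\,d\mu\ \le\ c_1\,2^\alpha t^\alpha\cdot A_2\cdot\frac{2^\alpha}{c_0 t^\alpha}\,\|V\|^{(\textrm{av})}_{\mathcal B,\overline\Omega,\mu}\int_\Omega|\nabla w|^2\,dx\ =\ A_2\,\frac{c_1}{c_0}\,4^\alpha\,\|V\|^{(\textrm{av})}_{\mathcal B,\overline\Omega,\mu}\int_\Omega|\nabla w|^2\,dx ,
$$
which is precisely \eqref{maz8} with $\Omega_0=\Omega$ and the claimed constant.

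Finally, if $\mathrm{supp}\,\mu$ is bounded and $t$ is so large that \eqref{Ahlfors} cannot be invoked at the radii needed in the first paragraph, I first pass to a closed centred sub-square $\Omega_0=Q_{x_0}(t_0)\subseteq\Omega$ with $\mathrm{supp}\,\mu\cap\overline\Omega\subseteq\overline{\Omega_0}$ and $t_0$ of the order of $\mathrm{diam}(\mathrm{supp}\,\mu)$; then $\int_{\overline\Omega}V|w|^2\,d\mu=\int_{\overline{\Omega_0}}V|w|^2\,d\mu$ and $\int_{\Omega_0}|\nabla w|^2\,dx\le\int_\Omega|\nabla w|^2\,dx$, so running the argument above with $\Omega_0$ in place of $\Omega$ (the side $t_0$ cancelling in the same way) yields \eqref{maz8} for all $w$ with $w_{\Omega_0}=0$. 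In the later applications of this lemma the squares are of size comparable to $\mathrm{diam}(\mathrm{supp}\,\mu)$, so this case is essentially vacuous. I expect the only genuine difficulty to be not any single step but the bookkeeping of the two rescalings — the spatial $y\mapsto x_0+ty$ and the measure $\mu\mapsto c\tilde\mu$ — arranged so that one lands on exactly the constant $A_2\frac{c_1}{c_0}4^\alpha$, depending only on $\alpha$ and $c_1/c_0$, as is required for the subsequent scaling of the annuli $Q_n$.
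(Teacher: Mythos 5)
Your proof is correct and takes essentially the same route as the paper's: both rescale by a similarity to the unit square, renormalise the pulled-back measure by $c=1/(c_1(2R)^\alpha)$ so that \eqref{ball} holds (using the $B(x,r)\subset B(z,2r)$ covering trick for points off $\mathrm{supp}\,\mu$), apply Lemma~\ref{measlemma3*}, transfer norms via \eqref{maz7}, and bound the normalising factor through the lower Ahlfors estimate to land exactly on $A_2\frac{c_1}{c_0}4^\alpha$. The paper disposes of the large-$\Omega$ edge case (when $R/2>\mathrm{diam}\,\mathrm{supp}\,\mu$) at the outset by shrinking to a centred sub-square $\Omega_1$ with the same intersection with $\mathrm{supp}\,\mu$, whereas you defer this to the end — a cosmetic difference; note however that your closing remark that this case is ``essentially vacuous'' in later applications is not accurate, since the lemma is invoked for squares of all scales arising from the Besicovitch covering, and it is precisely the $\Omega_0$ freedom in the statement that absorbs the issue.
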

\begin{proof}
Let $R$ be the side length of $\Omega$. It is sufficient to prove \eqref{maz8} in the case $\frac{R}{2} \le \mbox{diam}(\mbox{supp}\, \mu)$.
Indeed, if $\frac{R}{2} > \mbox{diam}(\mbox{supp}\, \mu)$, then there exists a square $\Omega_1$ with the same centre as $\Omega$ and
with the side length $R_1$ such that $R_1 < R$, $\frac{R_1}{2} \le \mbox{diam}(\mbox{supp}\, \mu)$, and
$\overline{\Omega_1}\cap \mbox{supp}\, \mu = \overline{\Omega}\cap \mbox{supp}\, \mu$. Then \eqref{maz8} would follow from
a similar estimate for $\Omega_1$, since
$$
\int_{\overline{\Omega}}V(y)|w(y)|^2d\mu(y) = \int_{\overline{\Omega_1}}V(y)|w(y)|^2d\mu(y) \ \mbox{ and } \
\|V\|^{(\textrm{av})}_{ \mathcal{B},\overline{\Omega_1}, \mu} = \|V\|^{(\textrm{av})}_{ \mathcal{B},\overline{\Omega}, \mu} .
$$
Below, we show that in the case $\frac{R}{2} \le \mbox{diam}(\mbox{supp}\, \mu)$, \eqref{maz8} holds
with $\Omega_0 = \Omega$.

There exist an orthogonal matrix $U \in \mathbb{R}^{2\times 2}$ and
a vector $x_0 \in \mathbb{R}^2$ such that $\Omega = \xi(Q)$, where $\xi$ is the similarity transformation
$\xi(y) = RUy + x_0$, $y \in \mathbb{R}^2$. Let $\tilde{V} := V\circ \xi$ and $\tilde{\mu} := \mu\circ \xi$.
Take any $x \in \mbox{supp}\,\tilde{\mu}$, i.e. any $x \in \mathbb{R}^2$ such that
$\xi(x) \in \mbox{supp}\,\mu$. Since $\xi(B(x,r)) = B(\xi(x), Rr)$ for any $r > 0$,
\eqref{Ahlfors} implies
\begin{equation}\label{Rr1}
c_0 (Rr)^{\alpha} \le \tilde{\mu}(B(x, r)) = \mu\left(\xi(B(x, r))\right) = \mu\left(B(\xi(x), Rr)\right) \le c_1 (Rr)^{\alpha}
\end{equation}
for any positive $r \le \frac{1}{R}\, \mbox{diam}(\mbox{supp}\, \mu)$. It is clear that the latter restriction is not needed for
the upper estimate in \eqref{Rr1}, since $\mu\left(B(\xi(x), Rr)\right)$ does not change as $r$ increases beyond
$\frac{1}{R}\, \mbox{diam}(\mbox{supp}\, \mu)$.
If $x \in \mathbb{R}^2\setminus \mbox{supp}\,\tilde{\mu}$, then, obviously,
$$
\tilde{\mu}(B(x, r)) = 0, \ \ \ \forall r < \mbox{dist} \left(x, \mbox{supp}\,\tilde{\mu}\right).
$$
If $r \ge \mbox{dist} \left(x, \mbox{supp}\,\tilde{\mu}\right)$, then there exists $x_1 \in \mbox{supp}\,\tilde{\mu}$ such that
$|x- x_1| \le r$. Hence $B(x, r) \subset B(x_1, 2r)$, and it follows from \eqref{Rr1} that
$$
\tilde{\mu}(B(x, r)) \le \tilde{\mu}(B(x_1, 2r)) \le c_1 (2R)^\alpha r^\alpha .
$$
Let
$$
c := \frac{1}{c_1 (2R)^{\alpha}}\, .
$$
Then Lemma \ref{measlemma3*} applies to the measure $c\tilde{\mu}$. Using \eqref{maz7} and the equality
$$
\int_Q|\nabla (w \circ\xi)(x)|^2 dx = \int_{\Omega}|\nabla w(y)|^2 dy ,
$$
we get
\begin{eqnarray}\label{disk}
&&\int_{\overline{\Omega}}V(y)|w(y)|^2d\mu(y) = \frac{1}{c}\int_{\overline{Q}}V(\xi(x))|w(\xi(x))|^2d(c\mu(\xi(x)))\nonumber\\
&&= \frac{1}{c}\int_{\overline{Q}}\tilde{V}(x)|(w\circ\xi)(x)|^2d(c\tilde{\mu}(x))\nonumber\\
&&\le \frac{1}{c} A_2 \|\tilde{V}\|_{\mathcal{B}, \overline{Q}, c\tilde{\mu}}\int_Q|\nabla (w \circ\xi)(x)|^2 dx\nonumber \\
&&\le \frac{1}{c} A_2\, \frac{c}{\min\{1, c\tilde{\mu}\left(\overline{Q}\right)\}}\|V\|^{(\textrm{av})}_{\mathcal{B}, \overline{\Omega}, \mu}
\int_{\Omega}|\nabla w(y)|^2 dy .
\end{eqnarray}
But
\begin{eqnarray}\label{disk1}
\frac{1}{\min\{1, c\tilde{\mu}\left(\overline{Q}\right)\}} &=& \max \left\{1, \frac{1}{c\tilde{\mu}\left(\overline{Q}\right)}\right\}
= \max \left\{1, \frac{c_1 (2R)^{\alpha}}{\mu\left(\overline{\Omega}\right)}\right\}\nonumber\\
&\le& \max\left\{ 1, \frac{c_1 (2R)^{\alpha}}{c_0\left(\frac{R}{2}\right)^{\alpha}}\right\}
= \frac{c_1}{c_0}4^{\alpha}.
\end{eqnarray}
In the inequality above, we have used \eqref{Ahlfors} and the fact $\Omega$ contains a disk of radius $\frac{R}{2}$ centred in the support of $\mu$.
Now, \eqref{maz8} follows from \eqref{disk} and \eqref{disk1}.
\end{proof}
\begin{remark}
{\rm Estimate \eqref{maz8} may fail if $\Omega$ is not centred in the support of $\mu$ (see \cite[Example 3.2.11]{MK}).}
\end{remark}

Let $G\subset\mathbb{R}^2$ be a bounded set with Lipschitz boundary such that $\mu\left(\overline{G}\right) > 0$. Let $G_0$ be
the smallest closed square containing $G$ with sides chosen in the directions $\theta_0$ and $\theta_0 + \frac{\pi}{2}$
from Corollary \ref{cor-direct}. Since $\mu\left(\overline{G}\right) > 0$, there exist $x \in \mbox{supp}\, \mu$ such that
$x \in \overline{G} \subseteq G_0$.
Let $G_1$ be the closed square centred at $x$ with sides chosen in the same directions as for $G_0$ and the side length
twice that of $G_0$. Then $G_1 \supset G_0$. Finally, Let $G^*$ be the closed square with the same centre and  the
same directions of sides as $G_0$, and with the side length $3$ times that of $G_0$. Then
\begin{equation}\label{Gs}
\overline{G} \subseteq G_0 \subset G_1 \subset G^* .
\end{equation}
Since $G_1$ is centred in $\mbox{supp}\, \mu$, Lemma \ref{measlemma3} can be applied to it. On the other hand, an advantage of
$G^*$ is that it does not depend on the choice of $x \in \mbox{supp}\, \mu$ and is uniquely defined by $G$ once the direction
$\theta_0$ has been chosen. Hence one can define the following quantity
$$
\kappa_0(G) := \frac{\mu(G^*)}{\mu\left(\overline{G}\right)}\,.
$$

Further, let
  \begin{eqnarray*}
V_{*}(x) := \left\{\begin{array}{l}
  V(x), \ \;\;\mbox{ if } x\in \overline{G},   \\ \\
   0, \ \;\;\mbox{ if } x\notin \overline{G}.
\end{array}\right.
\end{eqnarray*}
Then
\begin{equation}\label{exteqn}
\|V_{*}\|^{(av)}_{\mathcal{B}, G_1, \mu} \le \|V_{*}\|^{(av)}_{\mathcal{B}, G^*, \mu}
= \|V\|^{(av), \kappa_0(G)}_{\mathcal{B}, \overline{G}, \mu} \le \kappa_0(G) \|V\|^{(av)}_{\mathcal{B}, \overline{G}, \mu}
\end{equation}
(see Lemma \ref{lemma8}).

Using the Poincar\'e inequality (see, e.g., \cite[Ch. IV, \S7, Sect. 2, Proposition 2]{DL2}),
one gets the following estimate for operator \eqref{opT}
\begin{eqnarray}\label{ext1}
&& \|T_Gw\|^2_{W^1_2(G_1)} \le \|T_Gw\|^2_{W^1_2(G^*)} \le
\|T_Gw\|^2_{W^1_2(\mathbb{R}^2)}  \nonumber \\
&& \le \|T_G\|^2\|w\|^2_{W^1_2(G)} \le \|T_G\|^2 (1 + C_G)\int_G|\nabla w(x)|^2\,dx
\end{eqnarray}
for all $w\in W^1_2(G)$ with $w_G = 0$.

\begin{lemma}\label{measlemma4}
Let $\mu$ be a positive  Radon measure on $\mathbb{R}^2$ that is Ahlfors $\alpha$--regular and let
$G\subset\mathbb{R}^2$ be a bounded set with Lipschitz boundary such that $\mu\left(\overline{G}\right) > 0$.
Choose and fix a direction satisfying Corollary \ref{cor-direct}.
Further,  let $Q_x(r)$ be the square with sides of length $r > 0$ in the chosen direction centred at
$x\in \textrm{supp}\,\mu \cap \overline{G}$. Then for any $V\in L_{\mathcal{B}}(\overline{G}, \mu), \; V\geq 0$ and any
$n\in \mathbb{N}$ there exists a finite cover of $\textrm{supp}\,\mu\cap \overline{G}$  by squares
$Q_{x_k}(r_{x_k}), r_{x_k} > 0, k = 1, 2, ..., n_0$,  such that $n_0\le n$  and
\begin{equation}\label{maz9}
\int_{\overline{G}} V(x)|w(x)|^2d\mu(x) \le A_3n^{-1}\|V\|^{(av)}_{\mathcal{B}, \overline{G}, \mu}\int_G|\nabla w(x)|^2\,dx
\end{equation}
for all $w\in W^1_2(G)\cap C(\overline{G})$ with $(T_G w)_{Q_{x_k}(r_{x_k})} = 0, k = 1,..., n_0$ and $w_G = 0$, where
\begin{equation}\label{A3def}
A_3 = C_\alpha  \frac{c_1}{c_0} \|T_G\|^2(1 + C_G)  \kappa_0(G)^2
\end{equation}
and the constant $C_\alpha$ depends only on $\alpha$.
\end{lemma}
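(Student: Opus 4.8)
The plan is to cover $\textrm{supp}\,\mu\cap\overline{G}$ by at most $n$ squares, each centred in $\textrm{supp}\,\mu$ and on each of which $V$ has a prescribed small average Orlicz $\mathcal{B}$-norm, and then to add up the local estimates of Lemma~\ref{measlemma3}. Write $V_*$ for the extension of $V$ by zero outside $\overline{G}$, set $\mathcal{D}:=\|V\|^{(\textrm{av})}_{\mathcal{B},\overline{G},\mu}$, and let $R$ be the side length of $G_0$; recall from the construction preceding the lemma that $\overline{G}\subseteq G_0$, that $Q_x(r)\subseteq G^*$ whenever $x\in G_0$ and $0<r\le 2R$, and that $Q_x(2R)=G_1\supseteq\overline{G}$ for every $x\in G_0$. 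We may assume $\mathcal{D}>0$, since otherwise $V=0$ $\mu$-a.e.\ on $\overline{G}$ and any single square does the job. Also note that, by Lemma~\ref{lemma7} applied to a one-element collection, $\|V_*\|^{(\textrm{av})}_{\mathcal{B},\cdot,\mu}$ is non-decreasing under set inclusion.

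First I would construct the cover. Since $\mathcal{B}$ satisfies a global $\Delta_2$-condition, Lemma~\ref{measlemma2} applies, so for fixed $x$ the function $t\mapsto\|V_*\|^{(\textrm{av})}_{\mathcal{B},Q_x(t),\mu}$ is continuous with value $0$ at $t=0+$, it is non-decreasing, and at $t=2R$ its value is $\ge\|V_*\|^{(\textrm{av})}_{\mathcal{B},\overline{G},\mu}=\mathcal{D}$. Fix $\tau:=M\kappa_0(G)\mathcal{D}/n$, where $M$ is a Besicovitch covering constant for homothetic copies of the square $Q$ in $\mathbb{R}^2$, and for each $x\in\textrm{supp}\,\mu\cap\overline{G}$ let $r_x\in(0,2R]$ be the least radius with $\|V_*\|^{(\textrm{av})}_{\mathcal{B},Q_x(r_x),\mu}=\tau$, or $r_x:=2R$ if no such radius exists. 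The latter possibility forces $\tau>\mathcal{D}$, i.e.\ $n<M\kappa_0(G)$; in that degenerate range $Q_x(2R)=G_1\supseteq\textrm{supp}\,\mu\cap\overline{G}$, so one simply takes the single square $G_1$, applies Lemma~\ref{measlemma3} to it, bounds $\|V_*\|^{(\textrm{av})}_{\mathcal{B},G_1,\mu}\le\kappa_0(G)\mathcal{D}$ by \eqref{exteqn}, uses \eqref{ext1} for the Dirichlet integral, and absorbs the resulting factor into $A_3$ with the help of $n<M\kappa_0(G)$ — this is what produces the second power of $\kappa_0(G)$ in \eqref{A3def}. In the main case, apply the Besicovitch covering theorem to $\{Q_x(r_x):x\in\textrm{supp}\,\mu\cap\overline{G}\}$ to obtain a countable subcover of $\textrm{supp}\,\mu\cap\overline{G}$ that splits into at most $M$ pairwise disjoint subfamilies. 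Each such subfamily consists of squares contained in $G^*$, so by Lemma~\ref{lemma7} the sum of their norms $\|V_*\|^{(\textrm{av})}_{\mathcal{B},\cdot,\mu}$ is at most $\|V_*\|^{(\textrm{av})}_{\mathcal{B},G^*,\mu}\le\kappa_0(G)\mathcal{D}$ (see \eqref{exteqn}); since each of these norms equals $\tau$, each subfamily has at most $\kappa_0(G)\mathcal{D}/\tau=n/M$ members, hence the whole subcover consists of $n_0\le n$ squares, and every point of $\mathbb{R}^2$ lies in at most $M$ of them.

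Next I would localise and sum. To each square $Q_{x_k}(r_{x_k})$ of the subcover apply Lemma~\ref{measlemma3}. If $r_{x_k}/2\le\mathrm{diam}(\mathrm{supp}\,\mu)$, its relevant subsquare is $Q_{x_k}(r_{x_k})$ itself; otherwise one first shrinks it, exactly as in the proof of Lemma~\ref{measlemma3}, to a concentric $\Omega_1^{(k)}\subseteq Q_{x_k}(r_{x_k})$ with the same intersection with $\mathrm{supp}\,\mu$ and $\mathrm{side}(\Omega_1^{(k)})\le 2\,\mathrm{diam}(\mathrm{supp}\,\mu)$, which changes neither $\int V_*|w|^2\,d\mu$ nor the covering and overlap properties and only decreases the relevant Orlicz norm and Dirichlet integral. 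Relabelling these squares again as $Q_{x_k}(r_{x_k})$ — these are the squares claimed in the statement — Lemma~\ref{measlemma3} gives, for $w\in W^1_2(G)\cap C(\overline{G})$ with $(T_Gw)_{Q_{x_k}(r_{x_k})}=0$, the bound $\int_{Q_{x_k}(r_{x_k})}V_*|w|^2\,d\mu\le A_2\frac{c_1}{c_0}4^\alpha\,\tau\int_{Q_{x_k}(r_{x_k})}|\nabla(T_Gw)|^2$, since $\|V_*\|^{(\textrm{av})}_{\mathcal{B},Q_{x_k}(r_{x_k}),\mu}\le\tau$ and $T_Gw$ extends $w$ and is admissible. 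Adding over $k$, using $\textrm{supp}\,\mu\cap\overline{G}\subseteq\bigcup_kQ_{x_k}(r_{x_k})$ together with $V_*\ge0$ and that $V_*\mu$ is carried by $\mathrm{supp}\,\mu$, then the bounded overlap, and finally \eqref{ext1} with $w_G=0$, yields
\[
\int_{\overline{G}}V|w|^2\,d\mu\le A_2\,\frac{c_1}{c_0}\,4^\alpha\,\tau\,M\!\int_{\mathbb{R}^2}\!|\nabla(T_Gw)|^2\le A_2M^24^\alpha\,\frac{c_1}{c_0}\,\|T_G\|^2(1+C_G)\,\frac{\kappa_0(G)\mathcal{D}}{n}\int_G|\nabla w|^2 ,
\]
which is \eqref{maz9} with $C_\alpha=A_2M^24^\alpha$ in the main case; combined with the degenerate case (where one may freely use the larger exponent on $\kappa_0(G)$) this gives $A_3$ as in \eqref{A3def}, and since $A_2$ depends only on $\alpha$ (see \eqref{A2pi}) and $M$ is dimensional, $C_\alpha$ depends only on $\alpha$.

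The main obstacle is the covering step: one needs squares that are simultaneously centred in $\mathrm{supp}\,\mu$ (so Lemma~\ref{measlemma3} is available), of average $\mathcal{B}$-norm of $V$ at most a small fixed multiple of $\mathcal{D}/n$, of bounded overlap, and at most $n$ in number, with all constants depending only on $\alpha$ and $c_1/c_0$. The continuity and monotonicity of $t\mapsto\|V_*\|^{(\textrm{av})}_{\mathcal{B},Q_x(t),\mu}$ (Lemmas~\ref{measlemma2} and \ref{lemma7}) are exactly what let one choose the radii so that each selected square carries norm precisely $\tau$, and this is what converts the subadditivity bound $\sum_k\|V_*\|^{(\textrm{av})}_{\mathcal{B},Q_{x_k}(r_{x_k}),\mu}\le\kappa_0(G)\mathcal{D}$ into the cardinality bound $n_0\le n$; a naive dyadic decomposition would fail here because it gives no control on the number of selected squares. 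A secondary technical point is matching the hypothesis $(T_Gw)_{Q_{x_k}(r_{x_k})}=0$ with the possibly smaller subsquare occurring inside Lemma~\ref{measlemma3}, which is dealt with by the shrinking described above.
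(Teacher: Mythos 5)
Your argument is correct and follows essentially the same route as the paper's own proof: fix the target norm $\tau$ proportional to $\kappa_0(G)\mathcal{D}/n$ with the Besicovitch constant $N$ built in, use the continuity from Lemma~\ref{measlemma2} together with the monotonicity from Lemma~\ref{lemma7} to select a square $Q_x(r_x)\subseteq G^*$ of exactly that norm around each $x\in\mathrm{supp}\,\mu\cap\overline{G}$, invoke Besicovitch, bound the cardinality of each disjoint subfamily by $n/N$ via Lemma~\ref{lemma7} and \eqref{exteqn}, then sum the local bounds of Lemma~\ref{measlemma3} using the bounded overlap and \eqref{ext1}. The degenerate range $n<\kappa_0(G)N$ is treated exactly as in the paper by taking the single square $G_1$ and absorbing the extra $\kappa_0(G)N/n$ into the constant, which is the source of $\kappa_0(G)^2$ in \eqref{A3def}.

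One place where your write-up is actually slightly more careful than the paper: you explicitly address the possibility that a selected $Q_{x_k}(r_{x_k})$ has side length $r_{x_k}>2\,\mathrm{diam}(\mathrm{supp}\,\mu)$, in which case Lemma~\ref{measlemma3} produces a mean-zero hypothesis on a strictly smaller concentric square $\Omega_0$ rather than on $Q_{x_k}(r_{x_k})$ itself; you resolve this by shrinking to $\Omega_0$ (which preserves the cover of $\mathrm{supp}\,\mu\cap\overline{G}$, the bounded overlap, and only improves the Orlicz and Dirichlet quantities) and then relabelling. The paper glosses over this discrepancy between $(T_Gw)_{Q_{x_k}(r_{x_k})}=0$ and $(T_Gw)_{\Omega_0}=0$; your patch is exactly what is needed to make the statement literally correct as phrased.
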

\begin{proof}
Let $N\in\mathbb{N}$ be a bound (see, e.g., \cite[Theorem 2.7]{FM}) in the Besicovitch covering Lemma (see, e.g., \cite[Ch. 1 Theorem 1.1]{Guz}).
 If $n \le \kappa_0(G) N$,  take $n_0 = 1$ and let $Q_{x_1}(r_{x_1})$ be the square $\Omega_0$ from
 Lemma \ref{measlemma3} with $\Omega = G_1$. Then
 it follows from \eqref{maz8}, \eqref{exteqn}, and \eqref{ext1} that for all
$w\in W^1_2(G)\cap C(\overline{G})$ with $(T_G w)_{Q_{x_1}(r_{x_1})} = 0$ and $w_G = 0$,
\begin{eqnarray}\label{B3}
&&\int_{\overline{G}} V(x)|w(x)|^2\,d\mu(x) = \int_{G_1} V_*(x)|T_G w(x)|^2\,d\mu(x) \nonumber \\
&& \le A_2 \frac{c_1}{c_0}4^{\alpha}\|V_*\|^{(\textrm{av})}_{\mathcal{B}, G_1, \mu}\int_{G_1}|\nabla (T_G w)(x)|^2\,dx \nonumber\\
&& \le A_2 \frac{c_1}{c_0}4^{\alpha}\kappa_0(G) N n^{-1}\|V_*\|^{(\textrm{av})}_{\mathcal{B}, G^*, \mu}
\int_{G^*}|\nabla (T_G w)(x)|^2\,dx \nonumber \\
&&\le A_2 \frac{c_1}{c_0}4^{\alpha} \kappa_0(G) N n^{-1}\kappa_0(G) \|V\|^{(\textrm{av})}_{\mathcal{B}, \overline{G}, \mu}\|T_G\|^2(1 + C_G)
\int_{G}|\nabla w(x)|^2\,dx \nonumber \\
&& = B_2n^{-1}\|V\|^{(\textrm{av})}_{\mathcal{B}, \overline{G}, \mu}\int_{G}|\nabla w(x)|^2\,dx\,,
\end{eqnarray}
where $B_2 := A_2  \frac{c_1}{c_0}4^{\alpha}\|T_G\|^2(1 + C_G)  \kappa_0(G)^2 N$.

Now assume that $n > \kappa_0(G) N$. Lemma \ref{measlemma2} implies that for any $x\in \textrm{supp}\,\mu\cap\overline{G}$,
there is a closed square $Q_x(r_x)$ centred at $x$ such that
\begin{equation}\label{cts}
 \|V_{*}\|^{(\textrm{av})}_{\mathcal{B}, Q_x(r_x), \mu} =  \kappa_0(G) N n^{-1}\|V\|^{(av)}_{\mathcal{B}, \overline{G}, \mu}.
\end{equation}
Since $\kappa_0(G) N n^{-1} < 1$, it is not difficult to see that $Q_x(r_x) \subseteq G^*$.
Consider the covering $\Xi = \{Q_x(r_x)\}$ of $\textrm{supp}\,\mu\cap\overline{G}$. According to the Besicovitch covering Lemma,
$\Xi$ has a countable or a finite subcover $\Xi'$ that can be split into $N$ subsets $\Xi'_j,\,j = 1, ..., N$ in such  a way that the closed squares in each subset are pairwise disjoint.
Applying Lemma \ref{lemma7} and  \eqref{exteqn}, one gets
\begin{eqnarray*}
\kappa_0(G) Nn^{-1}\|V\|^{(av)}_{\mathcal{B}, \overline{G}, \mu} \textrm{card}\,\Xi'_j &=&  \underset{Q_{x}(r_{x})\in\Xi'_j}\sum \|V_{*}\|^{(\textrm{av})}_{\mathcal{B}, Q_{x}(r_{x}), \mu}\le \|V_{*}\|^{(\textrm{av})}_{\mathcal{B}, G^{*}, \mu}\\
&\le& \kappa_0(G) \|V\|^{(\textrm{av})}_{\mathcal{B}, \overline{G}, \mu}\,.
\end{eqnarray*}
Hence $\textrm{card}\,\Xi'_j \le nN^{-1}$ and
$$
n_0 := \textrm{card}\,\Xi' = \sum_{j= 1}^N \textrm{card}\,\Xi'_j \le  n.
$$
Again, using \eqref{maz8}, \eqref{ext1} and \eqref{cts}, one gets for all
$w\in W^1_2(G)\cap C(\overline{G})$ with $(T_G w)_{Q_{x_k}(r_{x_k})} = 0, k = 1,..., n_0$ and $w_G = 0$,
\begin{eqnarray*}
&& \int_{\overline{G}} V(x)|w(x)|^2d\mu(x) = \int_{\textrm{supp}\,\mu \cap \overline{G}} V(x)|w(x)|^2d\mu(x) \\
&& \le \sum_{k = 1}^{n_0}\int_{Q_{x_k}(r_{x_k})} V_{*}(x)|(T_G w)(x)|^2\,d\mu(x)\\
&& \le A_2  \frac{c_1}{c_0}4^{\alpha}\sum_{k = 1}^{n_0} \|V_{*}\|^{(\textrm{av})}_{\mathcal{B}, Q_{x_k}(r_{x_k}), \mu}
\int_{Q_{x_k}(r_{x_k})}|\nabla (T_G w)(x)|^2dx\\
&& = A_2  \frac{c_1}{c_0}4^{\alpha} \kappa_0(G) N n^{-1}\|V\|^{(\textrm{av})}_{\mathcal{B}, \overline{G}, \mu}
\sum_{k = 1}^{n_0}\int_{Q_{x_k}(r_{x_k})}|\nabla (T_G w)(x)|^2\,dx\\
&& = A_2  \frac{c_1}{c_0}4^{\alpha} \kappa_0(G) N n^{-1} \|V\|^{(\textrm{av})}_{\mathcal{B}, \overline{G}, \mu}
\sum_{j = 1}^{N}\underset{Q_{x_k}(r_{x_k})\in\Xi'_j}\sum\int_{Q_{x_k}(r_{x_k})}|\nabla (T_G w)(x)|^2\,dx\\
&& \le A_2  \frac{c_1}{c_0}4^{\alpha} \kappa_0(G) N n^{-1}\|V\|^{(\textrm{av})}_{\mathcal{B}, G, \mu}
\sum_{j = 1}^{N}\int_{G^*}|\nabla (T_G w)(x)|^2\,dx\\
&& \le A_2  \frac{c_1}{c_0}4^{\alpha} \kappa_0(G) N^2 n^{-1}\|T_G\|^2(1 + C_G)\|V\|^{(\textrm{av})}_{\mathcal{B}, G, \mu}
\int_{G}|\nabla w(x)|^2\,dx \\
&& = C_1n^{-1}\|V\|^{(\textrm{av})}_{\mathcal{B}, G, \mu}\int_{G}|\nabla w(x)|^2\,dx ,
\end{eqnarray*}
where $C_1 := A_2 \frac{c_1}{c_0}4^{\alpha} \|T_G\|^2(1 + C_G)  \kappa_0(G) N^2$. It is now left to take
\begin{equation}\label{A3}
A_3 := \max\left\{B_2, C_1\right\} = A_2  4^{\alpha} N \|T_G\|^2(1 + C_G) \frac{c_1}{c_0} \kappa_0(G)
\max\left\{\kappa_0(G), N\right\}.
\end{equation}
\end{proof}

\begin{lemma}\label{G}
Let $\mu$ and $G$ be as in Lemma \ref{measlemma4}. Then
\begin{equation}\label{A4ineq}
\int_{\overline{G}} V(x)|w(x)|^2d\mu(x) \le A_4 \|V\|^{(av)}_{\mathcal{B}, \overline{G}, \mu}\int_G|\nabla w(x)|^2\,dx
\end{equation}
for all $w\in W^1_2(G)\cap C(\overline{G})$ with $w_G = 0$, where
\begin{eqnarray}\label{A4}
A_4 = 2\|T_G\|^2(1 + C_G)  \left(A_2  \frac{c_1}{c_0}4^{\alpha} + \frac{\mathcal{B}^{-1}(1)}{|G|}\right) \kappa_0(G) .
\end{eqnarray}
\end{lemma}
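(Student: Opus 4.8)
The plan is to remove the constraint $w_{\Omega_0}=0$ of estimate \eqref{maz8} by splitting off the mean value of $T_Gw$ over the square $\Omega_0$ furnished by Lemma \ref{measlemma3}, and then to pay for the resulting constant mode with an $L^1$ bound coming from Lemma \ref{L1Orl}. Write $u:=T_Gw$; since $u|_{\overline G}=w$ and $V_*\equiv 0$ off $\overline G$, one has $\int_{\overline G}V|w|^2\,d\mu=\int_{\overline{G_1}}V_*|u|^2\,d\mu$. Apply Lemma \ref{measlemma3} with $\Omega=G_1$ (legitimate, since $G_1$ is centred in $\mathrm{supp}\,\mu$) to obtain the concentric square $\Omega_0\subseteq G_1$, put $a:=u_{\Omega_0}$, and apply \eqref{maz8} to the function $u-a\in W^1_2(G_1)\cap C(\overline{G_1})$, which satisfies $(u-a)_{\Omega_0}=0$ and $\nabla(u-a)=\nabla u$. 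Combining this with the elementary bound $|u|^2\le 2|u-a|^2+2|a|^2$ yields
\begin{equation*}
\int_{\overline G}V|w|^2\,d\mu\le 2A_2\,\frac{c_1}{c_0}4^{\alpha}\,\|V_*\|^{(\mathrm{av})}_{\mathcal B,\overline{G_1},\mu}\int_{G_1}|\nabla u|^2\,dx+2|a|^2\int_{\overline{G_1}}V_*\,d\mu .
\end{equation*}

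For the first (oscillatory) term I would use \eqref{exteqn} to replace $\|V_*\|^{(\mathrm{av})}_{\mathcal B,\overline{G_1},\mu}$ by $\kappa_0(G)\|V\|^{(\mathrm{av})}_{\mathcal B,\overline G,\mu}$, and \eqref{ext1} (where $w_G=0$ enters via the Poincar\'e inequality) to replace $\int_{G_1}|\nabla u|^2\,dx$ by $\|T_G\|^2(1+C_G)\int_G|\nabla w|^2\,dx$, producing the summand $2A_2\frac{c_1}{c_0}4^{\alpha}\kappa_0(G)\|T_G\|^2(1+C_G)$. For the second term, Lemma \ref{L1Orl} with $\Psi=\mathcal B$ gives $\int_{\overline{G_1}}V_*\,d\mu=\|V\|_{L_1(\overline G,\mu)}\le\mathcal B^{-1}(1)\|V\|^{(\mathrm{av})}_{\mathcal B,\overline G,\mu}$, while Cauchy--Schwarz together with \eqref{ext1} gives $|a|^2=|u_{\Omega_0}|^2\le|\Omega_0|^{-1}\int_{\Omega_0}|u|^2\,dx\le|\Omega_0|^{-1}\|T_G\|^2(1+C_G)\int_G|\nabla w|^2\,dx$. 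In the generic configuration Lemma \ref{measlemma3} yields $\Omega_0=G_1\supseteq G_0\supseteq G$, so $|\Omega_0|\ge|G|$ and the second summand is bounded by $\frac{2\mathcal B^{-1}(1)}{|G|}\|T_G\|^2(1+C_G)$; adding the two summands and using $\kappa_0(G)\ge 1$ to absorb them into the expression \eqref{A4} for $A_4$ completes the proof.

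It remains to treat the degenerate configuration in which Lemma \ref{measlemma3} is forced to take $\Omega_0\subsetneq G_1$, i.e.\ when $\tfrac12\,\mathrm{side}(G_1)>\mathrm{diam}(\mathrm{supp}\,\mu)$; here one mimics the reduction step in the proof of Lemma \ref{measlemma3}, replacing $G_1$ by the smaller concentric square $\Omega_1$ with $\overline{\Omega_1}\cap\mathrm{supp}\,\mu=\overline{G_1}\cap\mathrm{supp}\,\mu$ before running the above argument, and noting that $\int_{\overline G}V|w|^2\,d\mu$ and the averaged Orlicz norm $\|V_*\|^{(\mathrm{av})}_{\mathcal B,\overline{G_1},\mu}$ are unaffected (because $V_*\equiv 0$ off $\overline G$ and $\mu(\overline{\Omega_1})=\mu(\overline{G_1})$), while $\int_{\Omega_1}|\nabla u|^2\le\int_{G_1}|\nabla u|^2$. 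The step I expect to require the most care is precisely keeping the constant-mode contribution $|a|^2\int_{\overline{G_1}}V_*\,d\mu$ bounded with the factor $|G|^{-1}$ rather than the potentially much larger $|\Omega_0|^{-1}$ in front: the mean $a$ must be taken over a square of Lebesgue measure at least $|G|$, which is what forces one to be attentive about which concentric square plays the role of the averaging domain, and is the origin of the $\mathcal B^{-1}(1)/|G|$ term in \eqref{A4}.
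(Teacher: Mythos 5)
Your overall strategy matches the paper's: split $T_Gw$ into a mean-zero part plus a constant, bound the oscillatory part by Lemma \ref{measlemma3}, and pay for the constant mode with Lemma \ref{L1Orl}. The one genuine divergence --- and it is where your argument has a gap --- is your choice to subtract $a:=(T_Gw)_{\Omega_0}$ rather than $a:=(T_Gw)_{G_1}$. For the constant mode you then estimate $|a|^2\le|\Omega_0|^{-1}\int_{\Omega_0}|T_Gw|^2\,dx$, and to reach the $\mathcal B^{-1}(1)/|G|$ term in \eqref{A4} you need $|\Omega_0|\ge|G|$. You correctly observe this holds when $\Omega_0=G_1$, but in the degenerate configuration where the proof of Lemma \ref{measlemma3} shrinks to a concentric square $\Omega_1\subsetneq G_1$, your fallback --- rerun the argument with $\Omega_1$ in place of $G_1$ --- does not rescue the constant: $\Omega_1$ is constrained only by $\mathrm{diam}(\mathrm{supp}\,\mu)$, so $|\Omega_1|$ can be much smaller than $|G|$, and Cauchy--Schwarz now produces $|\Omega_1|^{-1}$ instead of $|G|^{-1}$. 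You flag this difficulty yourself, but you do not overcome it, so the stated value of $A_4$ in \eqref{A4} is not established by the proposal as written.

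The repair is exactly what the paper does: always subtract $a=(T_Gw)_{G_1}$, with no case split. Since $G\subseteq G_0\subset G_1$, Cauchy--Schwarz and \eqref{ext1} give
$|a|^2\le|G_1|^{-1}\|T_Gw\|^2_{L_2(G_1)}\le|G|^{-1}\|T_G\|^2(1+C_G)\int_G|\nabla w(x)|^2\,dx$
unconditionally, and the rest of your computation goes through verbatim: the oscillatory part is treated as in \eqref{maz8} together with \eqref{exteqn} and \eqref{ext1}, and the constant part via Lemma \ref{L1Orl}. Incidentally, your minor variant of applying Lemma \ref{L1Orl} directly to $V$ over $\overline G$ instead of to $V_*$ over $G_1$ is fine --- it even saves a factor $\kappa_0(G)$, which you then re-insert through $\kappa_0(G)\ge1$ to land on the expression \eqref{A4} --- but the degenerate case must be handled by switching the averaging domain, not by patching it after the fact.
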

\begin{proof}
It follows from \eqref{ext1} that
\begin{eqnarray*}
\left|\left(T_G w\right)_{G_1} \right|^2 = \left|\frac{1}{|G_1|}\int_{G_1} (T_G w)(x)\, dx\right|^2 \le
\frac{1}{|G_1|} \|T_Gw\|^2_{L_2(G_1)} \\
\le \frac{1}{|G|}  \|T_G\|^2 (1 + C_G)\int_G|\nabla w(x)|^2\,dx .
\end{eqnarray*}
Using Lemma \ref{L1Orl}, one gets, similarly to \eqref{B3},
\begin{eqnarray*}
&&\int_{\overline{G}} V(x)|w(x)|^2\,d\mu(x) = \int_{G_1} V_*(x)|T_G w(x)|^2\,d\mu(x)  \\
&& \le 2 \int_{G_1} V_*(x)|T_G w(x) - \left(T_G w\right)_{G_1}|^2\,d\mu(x)  \\
&& \ \ \  + 2 \int_{G_1} V_*(x) \left|\left(T_G w\right)_{G_1}\right|^2\,d\mu(x) \\
&& \le 2A_2 \frac{c_1}{c_0}4^{\alpha}\|V_*\|^{(\textrm{av})}_{\mathcal{B}, G_1, \mu}\int_{G_1}|\nabla (T_G w)(x)|^2\,dx \\
&& \ \ \ +2 \mathcal{B}^{-1}(1) \|V_*\|^{(\textrm{av})}_{\mathcal{B}, G_1, \mu} \frac{1}{|G|}  \|T_G\|^2 (1 + C_G)\int_G|\nabla w(x)|^2\,dx \\
&& \le A_4 \|V\|^{(\textrm{av})}_{\mathcal{B}, \overline{G}, \mu}\int_{G}|\nabla w(x)|^2\,dx\,,
\end{eqnarray*}
where $A_4$ is given by \eqref{A4}.
\end{proof}

\begin{remark}
{\rm If $\mu$ satisfies \eqref{Ahlfors}, then the measure $\frac1{c_1 2^\alpha} \mu$ satisfies \eqref{ball} 
(cf. the proof of Lemma \ref{measlemma3}). Applying
Lemma \ref{measlemma3*} to $\frac1{c_1 2^\alpha} \mu$ and using
\eqref{maz7} (with $c =  \frac{1}{c_1 2^\alpha}$,
$\Omega_1 = \Omega_2 = \overline{G}$,
and $\xi(x) \equiv x$) one gets a version of \eqref{A4ineq} with the following constant
\begin{equation}\label{A4'}
A'_4 = \frac{A_1 \|T_G\|^2 \left(1 + C_G\right)}{\min\left\{1, \frac1{c_1 2^\alpha}\,\mu\left(\overline{G}\right)\right\}}
\end{equation}
in place of $A_4$. The terms in \eqref{A4} and in \eqref{A3def} that depend on the measure $\mu$ are $\frac{c_1}{c_0}$ and $\kappa_0(G)$.
The latter can often be estimated above by a quantity that depends only on $\frac{c_1}{c_0}$ and $\alpha$ (see Examples
\ref{exsq} and \ref{exan} below).
On the other hand, \eqref{A4'} contains the term $\frac1{c_1 2^\alpha}\,\mu\left(\overline{G}\right)$. Although \eqref{A4'} would also work for us
(see \eqref{A4'an}), we prefer to use
\eqref{A4} as it matches \eqref{A3def} better than \eqref{A4'}. }
\end{remark}

\begin{example}\label{exsq}
{\rm Let $\Omega$ be a square centred in the support of $\mu$ with sides of length $R$ chosen in any direction.  Then
the side length of $\Omega^*$ does not exceed $3\sqrt{2}\, R$, and
$$
\mu(\Omega^*) \le c_1 \left(3\sqrt{2}\, R\right)^\alpha .
$$
If $\frac{R}{2} \le \mbox{diam}(\mbox{supp}\, \mu)$, then
$$
\mu\left(\overline{\Omega}\right) \ge c_0\left(\frac{R}{2}\right)^{\alpha} \ \mbox{ and } \
\kappa_0(\Omega) = \frac{\mu(\Omega^*)}{\mu\left(\overline{G}\right)} \le \frac{c_1}{c_0}\left(6\sqrt{2}\right)^{\alpha}.
$$
If $\frac{R}{2} > \mbox{diam}(\mbox{supp}\, \mu)$, then $\mu\left(\overline{\Omega}\right) = \mu(\Omega^*)$ and
$\kappa_0(\Omega) = 1$.}
\end{example}

\begin{example}\label{exan}
{\rm Let $G$ be a circular annulus centred at a point $x$ in the support of $\mu$ with the radii $r$ and $R$ such that
$$
\frac{R}{r} \ge \left(2\frac{c_1}{c_0}\right)^{\frac{1}{\alpha}} \ \mbox{ and } \ R \le \mbox{diam}(\mbox{supp}\, \mu) .
$$
Then the side length of the square $G^*$ equals $6R$, and
\begin{eqnarray*}
&& \mu\left(\overline{G}\right) = \mu\left(\overline{B(x, R)}\right) - \mu\left(B(x, r)\right) \ge c_0 R^\alpha - c_1 r^\alpha \\
&&\ge c_0 R^\alpha - c_1 \frac12 \frac{c_0}{c_1} R^ \alpha= \frac{c_0}{2} R^\alpha ,\\
&& \mu(G^*) \le c_1 (6R)^\alpha .
\end{eqnarray*}
Hence,
\begin{equation}\label{kappaan}
\kappa_0(G) \le \frac{c_1 (6R)^\alpha}{\frac{c_0}{2} R^\alpha} = 2\,\frac{c_1}{c_0}\,  6^\alpha .
\end{equation}
Note also that
\begin{equation}\label{A4'an}
\frac{1}{\min\left\{1, \frac1{c_1 2^\alpha}\,\mu\left(\overline{G}\right)\right\}} \le 
\frac{1}{\min\left\{1, \frac{c_0}{2^{\alpha + 1}c_1}\,R^\alpha\right\}}
= \max\left\{1, 2^{\alpha + 1}\, \frac{c_1}{c_0}\, R^{-\alpha}\right\}\, .
\end{equation}
}
\end{example}

As above, let $\mu$ be a positive  Radon measure on $\mathbb{R}^2$ that is Ahlfors $\alpha$--regular and let
$G\subset\mathbb{R}^2$ be a bounded set with Lipschitz boundary such that $\mu\left(\overline{G}\right) > 0$. Let
\begin{eqnarray}\label{qformG}
&& \mathcal{E}_{2V\mu, G}[w] : = \int_{G} |\nabla w(x)|^2 dx - 2\int_{\overline{G}} V(x) |w(x)|^2 d\mu(x) ,  \\
&&  \mbox{Dom}\, (\mathcal{E}_{2V\mu, G}) =
\left\{w \in W^1_2\left(G\right)\cap L^2\left(\overline{G}, Vd\mu\right) | \ w_G = 0\right\} . \nonumber
\end{eqnarray}

\begin{lemma}\label{measlemma5}{\rm (cf. \cite[Lemma 7.7]{Eugene})}
\begin{equation}\label{meain1}
N_- (\mathcal{E}_{2V\mu, G}) \le A_5
\|V\|^{(\textrm{av})}_{\mathcal{B}, \overline{G}, \mu} + 2 , \ \ \ \forall V \ge 0 ,
\end{equation}
where $A_5 := 2A_3$ and $A_3$ is the constant in Lemma \ref{measlemma4}.
\end{lemma}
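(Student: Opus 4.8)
The plan is to combine Lemma~\ref{measlemma4} with the variational principle \eqref{hermitian}. If $\|V\|^{(\textrm{av})}_{\mathcal{B}, \overline{G}, \mu} = 0$, then $V = 0$ $\mu$-a.e., so $\mathcal{E}_{2V\mu, G}[w] = \int_G |\nabla w|^2\,dx \ge 0$ for all $w$ in the domain, vanishing only on constants, which are excluded by the condition $w_G = 0$; hence $N_-(\mathcal{E}_{2V\mu, G}) = 0$ and \eqref{meain1} is trivial. Assume therefore that $\|V\|^{(\textrm{av})}_{\mathcal{B}, \overline{G}, \mu} > 0$, and set
\begin{equation*}
n := \max\bigl\{1,\ \lceil A_5 \|V\|^{(\textrm{av})}_{\mathcal{B}, \overline{G}, \mu}\rceil\bigr\} \in \mathbb{N},
\end{equation*}
so that $A_5\, n^{-1}\|V\|^{(\textrm{av})}_{\mathcal{B}, \overline{G}, \mu} \le 1$ and $n \le A_5 \|V\|^{(\textrm{av})}_{\mathcal{B}, \overline{G}, \mu} + 1$ (recall $A_5 = 2A_3$).

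Apply Lemma~\ref{measlemma4} with this $n$: it yields squares $Q_{x_k}(r_{x_k})$, $k = 1, \dots, n_0$, with $n_0 \le n$, such that
\begin{equation*}
\int_{\overline{G}} V|w|^2\,d\mu \le A_3\, n^{-1} \|V\|^{(\textrm{av})}_{\mathcal{B}, \overline{G}, \mu} \int_G |\nabla w|^2\,dx
\end{equation*}
for all $w \in W^1_2(G)\cap C(\overline{G})$ with $w_G = 0$ and $\ell_k(w) := (T_G w)_{Q_{x_k}(r_{x_k})} = 0$, $k = 1,\dots,n_0$. Each $\ell_k$ is a bounded linear functional on $W^1_2(G)$, since $T_G$ is bounded (see \eqref{opT}) and averaging over a fixed square is bounded on $W^1_2(\mathbb{R}^2)$. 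A density argument --- approximating $w$ by functions in $W^1_2(G)\cap C(\overline{G})$ that keep the finitely many constraints $w_G = 0$, $\ell_k(w) = 0$, and invoking the uniform bound of Lemma~\ref{G} (together with Lemma~\ref{L1Orl} to control $\int_{\overline{G}} V\,d\mu$) to pass to the limit in $L^2(\overline{G}, Vd\mu)$ --- extends the displayed inequality to every $w \in \mathrm{Dom}(\mathcal{E}_{2V\mu, G})$ with $\ell_k(w) = 0$, $k = 1,\dots,n_0$.

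Set $\mathcal{L} := \{w \in \mathrm{Dom}(\mathcal{E}_{2V\mu, G}) : \ell_k(w) = 0,\ k = 1,\dots,n_0\}$; as the common kernel of $n_0$ linear functionals it has codimension at most $n_0$ in $\mathrm{Dom}(\mathcal{E}_{2V\mu, G})$. By the previous step and the choice of $n$,
\begin{equation*}
\mathcal{E}_{2V\mu, G}[w] \ge \bigl(1 - A_5\, n^{-1}\|V\|^{(\textrm{av})}_{\mathcal{B}, \overline{G}, \mu}\bigr)\int_G |\nabla w|^2\,dx \ge 0 \qquad \text{for all } w \in \mathcal{L}.
\end{equation*}
Hence, if $\mathcal{M} \subseteq \mathrm{Dom}(\mathcal{E}_{2V\mu, G})$ is any subspace with $\mathcal{E}_{2V\mu, G}[u] < 0$ for all $u \in \mathcal{M}\setminus\{0\}$, then $\mathcal{M}\cap\mathcal{L} = \{0\}$, so the quotient map $\mathrm{Dom}(\mathcal{E}_{2V\mu, G}) \to \mathrm{Dom}(\mathcal{E}_{2V\mu, G})/\mathcal{L}$ is injective on $\mathcal{M}$ and $\dim\mathcal{M} \le n_0 \le n \le A_5 \|V\|^{(\textrm{av})}_{\mathcal{B}, \overline{G}, \mu} + 1$. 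Taking the supremum over such $\mathcal{M}$ (see \eqref{hermitian}) gives $N_-(\mathcal{E}_{2V\mu, G}) \le A_5 \|V\|^{(\textrm{av})}_{\mathcal{B}, \overline{G}, \mu} + 1 \le A_5 \|V\|^{(\textrm{av})}_{\mathcal{B}, \overline{G}, \mu} + 2$, which is \eqref{meain1}. I expect the only real obstacle to be the density step: since $\mu$ may be mutually singular with Lebesgue measure, $W^1_2$-convergence does not by itself control convergence in $L^2(\mu)$, and it is precisely the trace bound of Lemma~\ref{G} that forces the $W^1_2$-convergent approximants to be Cauchy --- hence convergent to the correct (trace) representative --- in $L^2(\overline{G}, Vd\mu)$.
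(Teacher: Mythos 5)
Your argument is correct and follows essentially the same route as the paper: apply Lemma \ref{measlemma4} with $n$ roughly $A_5\|V\|^{(\textrm{av})}_{\mathcal{B},\overline{G},\mu}$, obtain a finite-codimension subspace on which the form is nonnegative, and bound $N_-$ by that codimension (the paper takes $n=[A_5\|V\|^{(\textrm{av})}_{\mathcal{B},\overline{G},\mu}]+1$ and argues by contradiction with a negative subspace of dimension $>n+1$, which is the same counting argument). The density point you flag is real but implicit in the paper's framework: the $L^2(\overline{G},V\,d\mu)$ entry in the form's domain is to be read through the continuous trace $w\mapsto (T_Gw)|_{\overline G}$ from $W^1_2(G)$ into $L_{\mathcal{A}}(\overline G,\mu)$ furnished by Lemma \ref{meascor}, so the inequalities proved on $W^1_2(G)\cap C(\overline G)$ extend by continuity exactly as you indicate.
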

\begin{proof}
Let $n = \left[A_5\|V\|^{(\textrm{av})}_{\mathcal{B}, \overline{G}, \mu}\right]  + 1$
in Lemma \ref{measlemma4}, where $[a]$ denotes the largest integer not greater than $a$. Take any
linear subspace $\mathcal{L} \subset \mbox{Dom}\, (\mathcal{E}_{2V\mu, G})$
such that
$$
\dim \mathcal{L} > \left[A_5 \|V\|^{(\textrm{av})}_{\mathcal{B}, \overline{G}, \mu}\right] + 2 .
$$
Since $n_0 \le n$, there exists $w \in \mathcal{L}\setminus\{0\}$ such that
$w_{Q_{x_k}(r_{x_k})} = 0$, $k = 1, \dots, n_0$ and $w_{G} = 0$. Then
\begin{eqnarray*}
\mathcal{E}_{2V\mu, G}[w]  &=& \int_{G} |\nabla w( x)|^2dx - 2\int_{\overline{G}} V( x) |w( x)|^2 d\mu(x) \\
&\ge& \int_{G} |\nabla w(x)|^2 dx -
\frac{A_5 \|V\|^{(\textrm{av})}_{\mathcal{B}, \overline{G}, \mu}}{\left[A_5 \|V\|^{(\textrm{av})}_{\mathcal{B}, \overline{G}, \mu}\right] + 1}\,
\int_{G} |\nabla w(x)|^2 dx \\
&\ge& \int_{G} |\nabla w(x)|^2 dx - \int_{G} |\nabla w(x)|^2 dx = 0 .
 \end{eqnarray*}
Hence
$$
N_- (\mathcal{E}_{2V\mu, G}) \le \left[A_5
\|V\|^{(\textrm{av})}_{\mathcal{B}, \overline{G}, \mu}\right] + 2 \le
A_5 \|V\|^{(\textrm{av})}_{\mathcal{B}, \overline{G}, \mu} + 2.
$$
\end{proof}

\begin{lemma}\label{measlemma5*}
\begin{equation}\label{meain2}
N_- (\mathcal{E}_{2V\mu, G}) \le A_6 \|V\|^{(\textrm{av})}_{\mathcal{B}, \overline{G}, \mu} , \ \ \  \forall V\geq 0,
\end{equation}
where $A_6 := 2A_3 + 4A_4$, and $A_3$, $A_4$ are the constants in \eqref{A3def} and \eqref{A4} respectively.
\end{lemma}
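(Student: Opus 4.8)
The plan is to remove the additive constant $2$ present in Lemma \ref{measlemma5} by splitting into two regimes according to the size of $\|V\|^{(\textrm{av})}_{\mathcal{B}, \overline{G}, \mu}$: when this norm is small one shows directly that the form is nonnegative, and when it is large one absorbs the $2$ into the leading term using Lemma \ref{G}.

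First I would treat the case $\|V\|^{(\textrm{av})}_{\mathcal{B}, \overline{G}, \mu} \le \frac{1}{2A_4}$. For any $w$ in the form domain \eqref{qformG} (so in particular $w_G = 0$), Lemma \ref{G} gives $\int_{\overline{G}} V|w|^2\, d\mu \le A_4 \|V\|^{(\textrm{av})}_{\mathcal{B}, \overline{G}, \mu}\int_G |\nabla w|^2\, dx$, whence
$$
\mathcal{E}_{2V\mu, G}[w] = \int_G |\nabla w|^2\, dx - 2\int_{\overline{G}} V|w|^2\, d\mu \ge \left(1 - 2A_4 \|V\|^{(\textrm{av})}_{\mathcal{B}, \overline{G}, \mu}\right)\int_G |\nabla w|^2\, dx \ge 0 .
$$
Thus $\mathcal{E}_{2V\mu, G}$ is nonnegative on its whole domain, so $N_-(\mathcal{E}_{2V\mu, G}) = 0$, and \eqref{meain2} holds trivially since its right-hand side is nonnegative.

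In the complementary case $\|V\|^{(\textrm{av})}_{\mathcal{B}, \overline{G}, \mu} > \frac{1}{2A_4}$, I would invoke Lemma \ref{measlemma5} directly: $N_-(\mathcal{E}_{2V\mu, G}) \le A_5 \|V\|^{(\textrm{av})}_{\mathcal{B}, \overline{G}, \mu} + 2 = 2A_3 \|V\|^{(\textrm{av})}_{\mathcal{B}, \overline{G}, \mu} + 2$. Since the assumption $2A_4 \|V\|^{(\textrm{av})}_{\mathcal{B}, \overline{G}, \mu} > 1$ gives $2 < 4A_4 \|V\|^{(\textrm{av})}_{\mathcal{B}, \overline{G}, \mu}$, this yields $N_-(\mathcal{E}_{2V\mu, G}) < (2A_3 + 4A_4)\|V\|^{(\textrm{av})}_{\mathcal{B}, \overline{G}, \mu} = A_6 \|V\|^{(\textrm{av})}_{\mathcal{B}, \overline{G}, \mu}$. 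Combining the two cases gives \eqref{meain2}.

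I do not expect a genuine obstacle here; the argument is essentially bookkeeping around the already-established Lemmas \ref{measlemma5} and \ref{G}. The only point deserving a word of care is that Lemma \ref{G} is stated for $w \in W^1_2(G)\cap C(\overline{G})$, whereas the form domain \eqref{qformG} also demands $w \in L^2(\overline{G}, V d\mu)$; but this is precisely the setting in which the quadratic form is defined and in which Lemma \ref{measlemma5} was already applied, so the inequality extends to the full form domain by the same density considerations used there.
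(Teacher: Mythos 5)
Your argument reproduces the paper's proof exactly: both split on whether $\|V\|^{(\textrm{av})}_{\mathcal{B}, \overline{G}, \mu}$ exceeds $\frac{1}{2A_4}$, use Lemma \ref{G} to get $N_-=0$ in the small-norm case, and absorb the additive $2$ from Lemma \ref{measlemma5} into $4A_4\|V\|^{(\textrm{av})}_{\mathcal{B}, \overline{G}, \mu}$ in the large-norm case. Correct, and essentially identical to the paper's proof.
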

\begin{proof}
By \eqref{A4ineq},
$$
2\int_{\overline{G}}V(x)|w(x)|^2d\mu(x) \le 2A_4\|V\|^{\textrm{(av)}}_{\mathcal{B}, \overline{G}, \mu}\int_{G}|\nabla w(x)|^2dx
$$
for all $w\in W^1_2(G)\cap C(\overline{G})$ with $w_{G} = 0$.

If $\|V\|^{(\textrm{av})}_{\mathcal{B}, \overline{G}, \mu} \le \frac{1}{2A_4}$, then $N_- (\mathcal{E}_{2V\mu, G}) = 0$.
If $\|V\|^{(\textrm{av})}_{\mathcal{B}, \overline{G}, \mu} > \frac{1}{2A_4}$, then Lemma \ref{measlemma5}  implies
$$
N_- (\mathcal{E}_{2V\mu, G}) \le A_5 \|V\|^{(\textrm{av})}_{\mathcal{B}, \overline{G}, \mu} + 2
\le  A_6\|V\|^{(\textrm{av})}_{\mathcal{B}, \overline{G}, \mu},
$$
where $A_6 = A_5 + 4A_4 = 2A_3 + 4A_4$.
\end{proof}

Assume that $0 \in \mbox{supp}\, \mu$. Let $\mathbb{Z}_\mu := \mathbb{Z}$ if $\mbox{supp}\, \mu$ is unbounded
and $\mathbb{Z}_\mu := \mathbb{Z}\cap (-\infty, m]$ if $\mbox{supp}\, \mu$ is bounded (see \eqref{etam}).

\begin{lemma}\label{measlemma6}
There exists a constant $A_8 > 0$ such that
\begin{equation}\label{meaeqn6}
N_-\left(\mathcal{E}_{\mathcal{N}, 2V\mu, Q_n}\right) \le A_8 \|V\|^{(\textrm{av})}_{\mathcal{B}, Q_n ,\mu}, \ \ \ \forall V\ge 0, \
\forall n \in \mathbb{Z}_\mu
\end{equation}
(see \eqref{qformG} and \eqref{anc0c1}).
\end{lemma}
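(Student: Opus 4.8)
The plan is to reduce the estimate for an arbitrary annulus $Q_n$ to the single annulus $Q_1$ by scaling, and then to apply Lemma \ref{measlemma5*}. First, for $n\in\mathbb{Z}_\mu$ set $\lambda_n := \left(2\frac{c_1}{c_0}\right)^{\frac{n-1}{\alpha}}$, so that the similarity $\xi_n(y):=\lambda_n y$ maps $Q_1$ onto $Q_n$ (see \eqref{anc0c1}). Put $\tilde w := w\circ\xi_n$, $\tilde V := V\circ\xi_n$ and $\tilde\mu := \mu\circ\xi_n$. The two-dimensional scale invariance of $\int|\nabla w|^2$ together with the change of variables formula (cf. the derivation of \eqref{scale}) gives $\mathcal{E}_{\mathcal{N},2V\mu,Q_n}[w] = \mathcal{E}_{\mathcal{N},2\tilde V\tilde\mu,Q_1}[\tilde w]$; moreover $\xi_n$ preserves directions, hence commutes with the angular average \eqref{radial}, so $w\mapsto\tilde w$ is a linear bijection between the corresponding form domains. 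Therefore $N_-(\mathcal{E}_{\mathcal{N},2V\mu,Q_n}) = N_-(\mathcal{E}_{\mathcal{N},2\tilde V\tilde\mu,Q_1})$, and it remains to estimate the latter.

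Next I would record the properties of $\tilde\mu$: it is again Ahlfors $\alpha$-regular, with constants $c_0\lambda_n^\alpha$ and $c_1\lambda_n^\alpha$, so with the \emph{same} ratio $c_1/c_0$; also $0\in\mathrm{supp}\,\tilde\mu$ and $\mathrm{diam}(\mathrm{supp}\,\tilde\mu)=\lambda_n^{-1}\,\mathrm{diam}(\mathrm{supp}\,\mu)$. The annulus $Q_1$ has radii $\eta$ and $\eta\left(2\frac{c_1}{c_0}\right)^{1/\alpha}$, so $R/r=\left(2\frac{c_1}{c_0}\right)^{1/\alpha}$ and, in the bounded case where $n\le m$, $R\le\mathrm{diam}(\mathrm{supp}\,\tilde\mu)$; thus the hypotheses of Example \ref{exan} hold for $(Q_1,\tilde\mu)$, yielding $\tilde\mu(\overline{Q_1})>0$ and $\kappa_0(Q_1)\le 2\frac{c_1}{c_0}6^\alpha$, both uniformly in $n$. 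Since $(I-P)W^1_2(Q_1)\subseteq\{w:w_{Q_1}=0\}$ by \eqref{Omegan0}, the form $\mathcal{E}_{\mathcal{N},2\tilde V\tilde\mu,Q_1}$ is a restriction of the form \eqref{qformG} with $G=Q_1$ and measure $\tilde\mu$ (the direction $\theta_0$ of Corollary \ref{cor-direct} also serves $\tilde\mu$, as $\xi_n$ maps lines to lines of the same direction). By the variational principle and Lemma \ref{measlemma5*},
$$
N_-(\mathcal{E}_{\mathcal{N},2\tilde V\tilde\mu,Q_1})\le N_-(\mathcal{E}_{2\tilde V\tilde\mu,Q_1})\le A_6(Q_1,\tilde\mu)\,\|\tilde V\|^{(\textrm{av})}_{\mathcal{B},\overline{Q_1},\tilde\mu}.
$$

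Then I would check that $A_6(Q_1,\tilde\mu)$ is bounded independently of $n$. By \eqref{A3def}, \eqref{A4} and $A_6=2A_3+4A_4$, this constant depends on the measure only through the ratio $c_1/c_0$ (invariant under scaling) and through $\kappa_0(Q_1)$ (bounded in the previous step), while $A_2$ depends only on $\alpha$ and $\|T_{Q_1}\|$, $C_{Q_1}$, $|Q_1|$ are fixed geometric quantities attached to the annulus $Q_1$. Hence $A_6(Q_1,\tilde\mu)\le A_8$ for a constant $A_8$ independent of $n$ and $V$. Finally, \eqref{scale} with $c=1$ and $\xi=\xi_n$ gives $\|\tilde V\|^{(\textrm{av})}_{\mathcal{B},\overline{Q_1},\tilde\mu}=\|V\|^{(\textrm{av})}_{\mathcal{B},\overline{Q_n},\mu}=\|V\|^{(\textrm{av})}_{\mathcal{B},Q_n,\mu}$, and combining the displayed inequalities yields \eqref{meaeqn6}.

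The main obstacle is the one flagged in item 3) of the Introduction: scaling $Q_n$ down to $Q_1$ replaces $\mu$ by $\tilde\mu$, and one must ensure that every constant produced along the way — ultimately $A_6(Q_1,\tilde\mu)$ — depends on $\tilde\mu$ only via $c_1/c_0$ and $\alpha$, so that it does not deteriorate as $|n|\to\infty$. Applying Lemma \ref{measlemma5*} directly to $G=Q_n$ would not work, since, for instance, the Poincar\'e constant $C_{Q_n}$ grows like $\lambda_n^2$.
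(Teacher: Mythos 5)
Your proposal is correct and follows essentially the same route as the paper: reduce to the single annulus $Q_1$ via the dilation $x\mapsto\lambda_n x$, note that Ahlfors regularity with the same ratio $c_1/c_0$ is preserved, apply Lemma \ref{measlemma5*} together with the $\kappa_0$-bound from Example \ref{exan}, and transport the Orlicz norm back to $Q_n$ using \eqref{scale} with $c=1$. The only cosmetic difference is that the paper first establishes the $n=1$ bound with an explicit $A_8$ and then scales, whereas you scale first and then invoke the lemma, but the ingredients and the key observation that $A_6(Q_1,\tilde\mu)$ depends on $\tilde\mu$ only through $c_1/c_0$ and $\alpha$ are identical.
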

\begin{proof}
We start with the case $n =1$. It follows from Lemma \ref{measlemma5*} and Example \ref{exan} that
\begin{equation}\label{fol}
N_-\left(\mathcal{E}_{\mathcal{N}, 2V\mu, Q_1}\right) \le A_8 \|V\|^{(\textrm{av})}_{\mathcal{B}, Q_1, \mu}\;,\;\;\;\forall V\ge 0,
\end{equation}
with
\begin{eqnarray*}
A_8 &=& 2 C_\alpha  \frac{c_1}{c_0} \|T_{Q_1}\|^2(1 + C_{Q_1})  \left(2\,\frac{c_1}{c_0}\,  6^\alpha\right)^2 \\
&& + 8\|T_{Q_1}\|^2(1 + C_{Q_1})  \left(A_2  \frac{c_1}{c_0}4^{\alpha} + \frac{\mathcal{B}^{-1}(1)}{|Q_1|}\right) 2\,\frac{c_1}{c_0}\,  6^\alpha \\
&=& 8 C_\alpha 6^{2\alpha}  \left(\frac{c_1}{c_0}\right)^3 \|T_{Q_1}\|^2(1 + C_{Q_1})  \\
&& + 16 \|T_{Q_1}\|^2(1 + C_{Q_1})  \left(A_2  \frac{c_1}{c_0}4^{\alpha} + \frac{\mathcal{B}^{-1}(1)}{|Q_1|}\right) \frac{c_1}{c_0}\,  6^\alpha .
\end{eqnarray*}
As far as the dependence on the measure $\mu$ is concerned, $A_8$ depends only on the ratio $\frac{c_1}{c_0}$.\\

Let $\xi : Q_1 \longrightarrow Q_n$ by given by $\xi(x) := x\left(2\frac{c_1}{c_0}\right)^{\frac{n -1}{\alpha}}$.  Let $\tilde{V} := V \circ \xi,\, \tilde{\mu} := \mu \circ \xi$ and $\tilde{w} := w \circ \xi$. Since $\xi(B(x,r)) = B\left(\xi(x), \left(2\frac{c_1}{c_0}\right)^{\frac{n -1}{\alpha}}r\right)$ for any $r > 0$,
$\tilde{\mu}$ satisfies the following analogue of \eqref{Ahlfors} (cf. \eqref{Rr1})
$$
\tilde{c}_0r^{\alpha} \le \tilde{\mu}(B(x, r)) \le \tilde{c}_1r^{\alpha}
$$
for all $0 < r \le$ diam(supp $\tilde{\mu}$),  where $\tilde{c}_0 := c_0\left(2\frac{c_1}{c_0}\right)^{n-1}$,
$\tilde{c}_1 := c_1\left(2\frac{c_1}{c_0}\right)^{n - 1}$, and $\frac{\tilde{c}_1}{\tilde{c}_0} = \frac{c_1}{c_0}$.
Now,
\begin{eqnarray*}
&&\int_{Q_n}|\nabla w(y)|^2 dy - 2\int_{Q_n}V(y)|w(y)|^2d\mu(y)\\&& = \int_{Q_1}|\nabla\tilde{w}(x)|^2dx - 2
\int_{Q_1}\tilde{V}(x)|\tilde{w}(x)|^2d\tilde{\mu}(x).
\end{eqnarray*}
It follows from \eqref{fol} that
\begin{eqnarray*}
N_-\left(\mathcal{E}_{\mathcal{N}, 2V\mu, Q_n}\right) = N_-\left(\mathcal{E}_{\mathcal{N}, 2\tilde{V}\tilde{\mu}, Q_1}\right) \le
A_8 \|\tilde{V}\|^{(\textrm{av})}_{\mathcal{B}, Q_1, \tilde{\mu}}\;,\;\;\;\forall \tilde{V}\ge 0.
\end{eqnarray*}
It follows from \eqref{scale} with $c = 1$ that
$\|\tilde{V}\|^{(\textrm{av})}_{\mathcal{B}, Q_1, \tilde{\mu}} = \|V\|^{(\textrm{av})}_{\mathcal{B}, Q_n, \mu}$.
Thus
$$
N_-\left(\mathcal{E}_{\mathcal{N}, 2V\mu, Q_n}\right) \le A_8 \|V\|^{(\textrm{av})}_{\mathcal{B}, Q_n, \mu} , \;\;\;\;\;\forall V\ge 0.
$$
Hence the scaling $x \longmapsto x\left(2\frac{c_1}{c_0}\right)^{\frac{n -1}{\alpha}}$ allows one to reduce the case of any $n\in\mathbb{Z}_\mu$
to the case $n =1$.
\end{proof}
We are now in position to derive an estimate for the second term in the right-hand side of \eqref{meaeqn1} from the variational principle
(see, e.g., \cite[Lemma 3.2]{KS}). Note that $\mbox{supp}\, \mu\setminus\{0\} \subseteq \cup_{n \in \mathbb{Z}_\mu} Q_n$ and $\mu(\{0\}) = 0$,
and that \eqref{Omegan0} implies
$$
w|_{Q_n} \in \mathrm{Dom}(\mathcal{E}_{V\mu,Q_n}) , \ \ \ \forall w \in \mathrm{Dom}(\mathcal{E}_{\mathcal{N}, 2V\mu}) .
$$
Hence, the
above Lemma  implies,
for any $c < \frac{1}{A_8}$,
\begin{equation}\label{meaeqn6*}
N_-\left(\mathcal{E}_{\mathcal{N}, 2V\mu}\right) \le A_8\sum_{\mathcal{D}_n > c} \mathcal{D}_n\;,\;\;\;\forall V\ge 0
\end{equation}
(see \eqref{Dn}). Thus Theorem \ref{mainthm} follows from \eqref{meaeqn1},  \eqref{meaeqn5} and \eqref{meaeqn6*}.

\section{Proof of Corollary \ref{maincor}}\label{corproof}

It is easy to see that
\begin{equation}\label{Gns}
\sum_{G_n > 1/4}  \sqrt{G_n} \le \sum_{G_n > 1/4} 2 G_n \le 2 \sum_{n \in \mathbb{Z}} G_n .
\end{equation}

Let $\mathbf{\Omega}_{-1}$  be the closed disc $\overline{B\left(0, e^{-1}\right)}$ and $\beta \in (0, \alpha)$.
Then using \eqref{h2}, \eqref{Ahlfors}, and
Fubini's theorem one gets
\begin{eqnarray*}
&& \sum_{n < 0}  G_n \le 2 \int_{|x| \le 1/e} V(x) |\ln|x||\, d\mu(x)
\le  2 \|V\|_{\mathcal{B}, \mathbf{\Omega}_{-1}, \mu}
\left\|\ln|\cdot|\right\|_{(\mathcal{A}, \mathbf{\Omega}_{-1}, \mu)} , \\
&& \int_{\mathbf{\Omega}_{-1}} \mathcal{A}\left(\beta\left|\ln|x|\right|\right)\, d\mu(x) \le
\int_{|x| \le 1/e} e^{\ln \frac{1}{|x|^\beta}}\, d\mu(x)  \le \int_{|x| \le 1} \frac{1}{|x|^\beta}\, d\mu(x)  \\
&& = \int_{|x| \le 1} \left(\beta\int_{|x|}^1 r^{-\beta -1}\, dr + 1\right)\, d\mu(x) \\
&& = \beta \int_0^1 r^{-\beta -1} \int_{|x| \le r} d\mu(x) dr + \int_{|x| \le 1} 1\, d\mu(x) \\
&& = \beta \int_0^1 r^{-\beta -1} \mu(B(0, r))\, dr + \mu(B(0, 1)) \le \beta \int_0^1 r^{-\beta -1} c_1 r^\alpha dr +  c_1 \\
&& =  c_1 \left(\frac{\beta}{\alpha - \beta} + 1\right) = c_1\, \frac{\alpha}{\alpha - \beta} =: A_9
\end{eqnarray*}
(We have $\sum_{n < 0}  G_n \le 2 \int_{|x| \le 1/e} \cdots$ rather than $\sum_{n < 0}  G_n = \int_{|x| \le 1/e} \cdots$
in the first inequality above because $G_n$ are integrals over domains with intersections that may have positive
measure $\mu$ (see \eqref{meaeqn4}):
$$
\mu\left(\left\{x \in \mathbb{R}^2 | \ |x| \in J_{n - 1}\right\}\cap \left\{x \in \mathbb{R}^2 | \ |x| \in J_n \right\}\right) =
\mu\left(\left\{x \in \mathbb{R}^2 | \ |x| = e^{-2^{|n|}}\right\}\right)
$$
may be positive. A similar situation occurs in \eqref{Dns0} and in the proof of Lemma \ref{reversetr} below.)
Hence
$$
\left\|\ln|\cdot|\right\|_{(\mathcal{A}, \mathbf{\Omega}_{-1}, \mu)} \le \frac{1}{\beta}\, \max\{1, A_9\} =: A_{10}
$$
(see \eqref{LuxNormImpl}) and
\begin{equation}\label{A0B0}
\sum_{n < 0}  G_n \le 2 A_{10} \|V\|_{\mathcal{B}, \mathbf{\Omega}_{-1}, \mu}
\le 2 A_{10} \|V\|_{\mathcal{B}, \mathbb{R}^2, \mu} .
\end{equation}
Further,
\begin{eqnarray}\label{G0}
G_0 &=& \int_{e^{-1} \le |x| \le e} V(x)\, d\mu(x) \nonumber \\
&\le& \frac{1}{\ln\left(1 + e^{-1}\right)} \int_{e^{-1} \le |x| \le e} V(x) \ln(1 + |x|)\, d\mu(x) \nonumber \\
&\le& \frac{1}{\ln\left(1 + e^{-1}\right)} \int_{\mathbb{R}^2} V(x) \ln(1 + |x|)\, d\mu(x)
\end{eqnarray}
and
\begin{equation}\label{logcomp}
\sum_{n > 0}  G_n \le 2 \int_{|x| \ge e} V(x) \ln|x|\, d\mu(x)  \le
2 \int_{\mathbb{R}^2} V(x) \ln(1 + |x|)\, d\mu(x) .
\end{equation}
It follows from \eqref{Gns}--\eqref{logcomp} that
\begin{equation}\label{Gnsest}
\sum_{G_n > 1/4}  \sqrt{G_n} \le A_{11} \left(\int_{\mathbb{R}^2} V(x) \ln(1 + |x|)\, d\mu(x) +  \|V\|_{\mathcal{B}, \mathbb{R}^2, \mu}\right) ,
\end{equation}
where
$$
A_{11} = 2\max\left\{2A_{10}, \ \frac{1}{\ln\left(1 + e^{-1}\right)} + 2\right\} .
$$

Let $\mathbf{\Omega}_0$  be the closed unit disc $\overline{B\left(0, \eta\right)}$. It follows from Lemma \ref{lemma7} and
Corollary \ref{avequiv} that
\begin{eqnarray}\label{Dns0}
&& \sum_{n \le 0} \mathcal{D}_n = \sum_{k \le 0} \mathcal{D}_{2k} + \sum_{k \le 0} \mathcal{D}_{2k - 1} \le
2 \|V\|^{(\textrm{av})}_{\mathcal{B}, \mathbf{\Omega}_0, \mu}  \nonumber \\
&& \le 2 \max\left\{1, \mu\left(\mathbf{\Omega}_0\right)\right\} \|V\|_{\mathcal{B}, \mathbf{\Omega}_0, \mu}
\le 2 \max\left\{1, \mu\left(\mathbf{\Omega}_0\right)\right\} \|V\|_{\mathcal{B}, \mathbb{R}^2, \mu} .
\end{eqnarray}

We need the following lemma to estimate $\sum_{n \ge 1} \mathcal{D}_n$.
\begin{lemma}{\rm (cf. \cite[Lemma 8.1]{Eugene})}\label{reversetr}
There exists $A_{12} > 0$ such that
$$
\sum_{n = 1}^\infty \|V\|_{\mathcal{B}, Q_n, \mu}  \le A_{12}
\left(\|V\|_{\mathcal{B}, \mathbb{R}^2\setminus B(0, 1), \mu} +
\int_{|x| \ge 1} V(x) \ln(2 + \ln |x|)\, d\mu(x)\right)
$$
for any $V \ge 0$.
\end{lemma}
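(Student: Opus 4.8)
The plan is to work throughout with the Luxemburg norms $\|V\|_{(\mathcal{B},Q_n,\mu)}$, which differ from $\|V\|_{\mathcal{B},Q_n,\mu}$ by at most a factor $2$ (see \eqref{Luxemburgequiv}), and to estimate each $\|V\|_{(\mathcal{B},Q_n,\mu)}$ in such a way that the sum over $n$ collapses into the two terms on the right. We may assume the right-hand side is finite, so that $V\in L^1$ of the exterior region. Since consecutive annuli $Q_n,Q_{n+1}$ meet along a circle which may carry positive $\mu$-mass, split $\sum_{n\ge1}=\sum_{n\text{ even}}+\sum_{n\text{ odd}}$ and argue for one parity, the annuli within it being pairwise disjoint (exactly as in \eqref{Dns0}). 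Writing $\rho_n:=\eta\bigl(2c_1/c_0\bigr)^{n/\alpha}$ and using $0\in\operatorname{supp}\mu$ together with \eqref{Ahlfors}, one gets $\mu(Q_n)\ge c_1\rho_{n-1}^{\alpha}>\tfrac{c_0}{2}\bigl(2c_1/c_0\bigr)^{n-1}$ whenever $\mu(Q_n)>0$; hence only finitely many annuli (their number depending on $\mu$) have $\mu(Q_n)\le1$ or index so small that $Q_n\not\subseteq\mathbb{R}^2\setminus B(0,1)$, and for each of these we simply invoke monotonicity of the Luxemburg norm in the domain. From now on $n$ runs over the remaining odd indices, for which $\mu(Q_n)>1$ and $Q_n\subseteq\mathbb{R}^2\setminus B(0,1)$.

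For each such $n$ let $\overline V_n:=\mu(Q_n)^{-1}\int_{Q_n}V\,d\mu$ and split $V\chi_{Q_n}=V_n'+V_n''$ with $V_n':=V\chi_{Q_n\cap\{V>\overline V_n\}}$ (the part above the local mean) and $V_n'':=V\chi_{Q_n\cap\{V\le\overline V_n\}}$. For the flat part, $V_n''/\|V\|_{L^1(Q_n,\mu)}\le\overline V_n/\|V\|_{L^1(Q_n,\mu)}=\mu(Q_n)^{-1}\le1$, so using $\mathcal{B}(s)\le s^2$ for $s\ge0$ one gets $\int_{Q_n}\mathcal{B}\bigl(V_n''/\|V\|_{L^1(Q_n,\mu)}\bigr)d\mu\le\|V\|_{L^1(Q_n,\mu)}^{-2}\,\overline V_n\int_{Q_n}V_n''\,d\mu\le\mu(Q_n)^{-1}\le1$, whence $\|V_n''\|_{(\mathcal{B},Q_n,\mu)}\le\|V\|_{L^1(Q_n,\mu)}$. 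Summing over $n$ and using $\ln(2+\ln|x|)\ge\ln2$ on $\{|x|\ge1\}$, the whole flat contribution is at most a constant times $\int_{|x|\ge1}V(x)\ln(2+\ln|x|)\,d\mu$.

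The tall part is the crux. Since $V_n'$ is supported in $Q_n$, its Luxemburg norm is insensitive to enlarging the domain, so $\|V_n'\|_{(\mathcal{B},Q_n,\mu)}$ equals the norm of $V_n'$ over all of $\mathbb{R}^2\setminus B(0,1)$. Put $W:=\sum_n V_n'\ (\le V)$, $M:=\|W\|_{(\mathcal{B},\mathbb{R}^2\setminus B(0,1),\mu)}\ (\le\|V\|_{\mathcal{B},\mathbb{R}^2\setminus B(0,1),\mu})$, $E_n:=\int_{Q_n}V_n'\,d\mu$, $E:=\sum_n E_n$, and $s_n:=\int_{Q_n}\mathcal{B}(V_n'/M)\,d\mu$; since the $V_n'$ have disjoint supports, $\sum_n s_n=\int\mathcal{B}(W/M)\,d\mu\le1$. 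Splitting $Q_n$ into $\{V_n'\le\kappa_n\}$ and $\{V_n'>\kappa_n\}$ at a suitable height and applying the elementary bounds of Lemma~\ref{elem} (keeping every logarithm relative to $M$, so that no term $\ln_+M$ is produced), one checks that $\kappa_n\asymp E_n\ln_+(M/E_n)+E_n+Ms_n$ makes $\int_{Q_n}\mathcal{B}(V_n'/\kappa_n)\,d\mu\le1$ (when this would exceed $M$ one just takes $\kappa_n=M$, valid since $s_n\le1$), giving
$$
\|V_n'\|_{(\mathcal{B},Q_n,\mu)}\le C\Bigl(E_n\ln_+\tfrac{M}{E_n}+E_n+Ms_n\Bigr).
$$
Now sum over $n$: $\sum_nMs_n\le M$; $\sum_nE_n=E\le\|V\|_{L^1(\mathbb{R}^2\setminus B(0,1),\mu)}$; and, writing $\ln_+(M/E_n)\le\ln_+(M/E)+\ln_+(E/E_n)$, one has $\sum_nE_n\ln_+(M/E_n)\le E\ln_+(M/E)+\sum_nE_n\ln_+(E/E_n)$. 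Here $E\ln_+(M/E)\le M/e$ because $t\mapsto t\ln_+(M/t)\le M/e$; and $\sum_nE_n\ln_+(E/E_n)\le 2\sum_nE_n\ln(n+1)+CE$, splitting according to whether $E_n\ge E(n+1)^{-2}$ (then $\ln_+(E/E_n)\le2\ln(n+1)$) or not (then, by monotonicity of $t\mapsto t\ln_+(E/t)$ on $(0,E/e)$, $E_n\ln_+(E/E_n)\le2E\ln(n+1)(n+1)^{-2}$, which sums). Finally, for $x\in Q_n$ one has $|x|\ge\rho_{n-1}$, so $\ln(n+1)\le C\bigl(1+\ln(2+\ln|x|)\bigr)$ there, with $C$ depending only on $c_1/c_0$ and $\alpha$; hence $\sum_nE_n\ln(n+1)$ is controlled by $\int_{|x|\ge1}V(x)\ln(2+\ln|x|)\,d\mu$. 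Assembling the tall and flat estimates over both parities and adding back the finitely many exceptional annuli yields the assertion, with $A_{12}$ depending on absolute constants and, through $\mu$, on $c_0,c_1,\alpha,\eta$.

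The main obstacle is precisely this tall-part estimate. The $L\log L$ (Luxemburg $\mathcal{B}$-) norm is \emph{not} subadditive over disjoint sets — a constant on $N$ disjoint sets of unit mass has norm $\asymp\sqrt N$ whereas the sum of the individual norms is $\asymp N$ — so one cannot hope to dominate $\sum_n\|V\|_{\mathcal{B},Q_n,\mu}$ by $\|V\|_{\mathcal{B},\mathbb{R}^2\setminus B(0,1),\mu}$ alone, and the weight $\ln(2+\ln|x|)$ is there exactly to pay for the entropy of the distribution of the mass of $V$ among the annuli. It is essential that this weight be \emph{doubly} logarithmic: Lemma~\ref{avequivB}, applied annulus by annulus, would only control $\|V\|_{\mathcal{B},Q_n,\mu}$ with the far larger factor $\ln\mu(Q_n)\asymp\ln|x|^{\alpha}$, so the delicate level‑set splitting above cannot be replaced by an off‑the‑shelf comparison of averaged and ordinary norms. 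Secondary technicalities are the possibly positive $\mu$‑mass of the separating circles (handled by the even/odd decomposition) and, when $\eta<1$, the single annulus $Q_1$ protruding slightly into $B(0,1)$, which is handled by replacing $\mathbb{R}^2\setminus B(0,1)$ by the comparable region $\{|x|\ge\eta\}$ before running the argument.
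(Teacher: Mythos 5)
Your argument is correct, but it takes a genuinely different route from the paper's.

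The paper's proof is a compact bootstrap: normalizing $\|V\|_{(\mathcal{B},\mathbb{R}^2\setminus B(0,1),\mu)}=1$, it sets $\kappa_n:=\|V\|_{(\mathcal{B},Q_n,\mu)}$ and $\alpha_n:=\int_{Q_n}\mathcal{B}(V)\,d\mu$, applies the upper bound of Lemma~\ref{elem} to $\mathcal{B}(V/\kappa_n)$ and uses $\ln_+(V/\kappa_n)\le\ln_+V+\ln(1/\kappa_n)$ (valid because $\kappa_n\le1$) to obtain the self-referential estimate $\kappa_n\le4\alpha_n+(1+2\ln\tfrac1{\kappa_n})\|V\|_{L_1(Q_n,\mu)}$; the dichotomy $\kappa_n\lessgtr n^{-2}$ then decouples the $\ln(1/\kappa_n)$ into a convergent $\sum n^{-2}$ plus a $\ln n$ weight, and $\ln n\lesssim\ln(2+\ln|x|)$ on $Q_n$ finishes. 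You instead decompose $V\chi_{Q_n}$ into a flat part below the local $\mu$-mean and a tall part above it, control the flat part by $\|V\|_{L^1(Q_n,\mu)}$ via $\mathcal{B}(s)\le s^2$, and control the tall part by an explicit ``entropy'' bound $\|V_n'\|_{(\mathcal{B},Q_n,\mu)}\lesssim E_n\ln_+(M/E_n)+E_n+Ms_n$ followed by the splitting $\ln_+(M/E_n)\le\ln_+(M/E)+\ln_+(E/E_n)$ and the $E_n\gtrless E(n+1)^{-2}$ dichotomy. The two dichotomies play the same structural role (turning a self-referential or entropy logarithm into the harmless $\ln n$), and both ultimately rest on Lemma~\ref{elem} and the quasi-additivity $\sum_n\int_{Q_n}\mathcal{B}(V/M)\,d\mu\le2$. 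What your route buys is a sharper conceptual picture — it isolates exactly which feature of $V$ forces the double-logarithmic weight (the distribution of $V$'s mass among annuli, with $\sum E_n\ln_+(E/E_n)$ as the explicit entropy) and makes manifest why the $\mathcal{B}$-norm alone cannot control $\sum_n\|V\|_{\mathcal{B},Q_n,\mu}$. The price is length and bookkeeping: the flat/tall split is an extra layer the paper does not need, and you accumulate more constants (and, through the exceptional annuli with $0<\mu(Q_n)\le1$, a dependence on $c_0,c_1,\eta$ separately rather than only on $c_1/c_0$ and $\alpha$; this is harmless here since the lemma only asserts existence of $A_{12}$). Two small points worth tightening if you write this up: (i) for a bounded $\mathrm{supp}\,\mu$ there may be infinitely many $n$ with $\mu(Q_n)\le1$, namely those with $\mu(Q_n)=0$, which contribute nothing but should be mentioned; and (ii) the fix for $\eta<1$ via replacing $B(0,1)$ by $B(0,\eta)$ changes the right-hand side and needs to be reconciled with the stated lemma — the paper sidesteps this by tacitly suppressing $\eta$ in the final integral bounds, so neither treatment is fully rigorous on this point.
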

\begin{proof}
Suppose first that $\|V\|_{(\mathcal{B}, \mathbb{R}^2\setminus B(0, 1), \mu)} = 1$ and let
$$
\alpha_n := \int_{Q_n} \mathcal{B}(V(x))\, d\mu(x) , \ \ \
\kappa_n := \|V\|_{(\mathcal{B}, Q_n, \mu)} , \ \ \ n \in \mathbb{N} .
$$
Then
\begin{eqnarray*}
&&\kappa_n \le \|V\|_{(\mathcal{B}, \mathbb{R}^2\setminus B(0, 1), \mu)} = 1 , \\
&&\sum_{n = 1}^\infty \alpha_n = \sum_{n = 1}^\infty \int_{Q_n} \mathcal{B}(V(x))\, d\mu(x) \le
2\int_{\mathbb{R}^2\setminus B(0, 1)} \mathcal{B}(V(x))\, d\mu(x) = 2
\end{eqnarray*}
and it follows from Lemma \ref{elem} that
\begin{eqnarray*}
1 =  \int_{Q_n} \mathcal{B}\left(\frac{V(x)}{\kappa_n}\right) d\mu(x) \le
\int_{Q_n} \left(\frac{V(x)}{\kappa_n} +
2\frac{V(x)}{\kappa_n} \ln_+\frac{V(x)}{\kappa_n}\right) d\mu(x) \\
\le \frac1{\kappa_n} \int_{Q_n} \left(V(x) + 2 V(x) \ln_+V(x)\right) d\mu(x) +
\frac2{\kappa_n}\, \ln\frac1{\kappa_n}\, \|V\|_{L_1(Q_n, \mu)} \\
\le \frac4{\kappa_n}\, \alpha_n + \frac1{\kappa_n}\left(1 + 2 \ln\frac1{\kappa_n}\right)
\|V\|_{L_1(Q_n, \mu)} .
\end{eqnarray*}
Hence
$$
\kappa_n \le 4\alpha_n + \left(1 + 2 \ln\frac1{\kappa_n}\right)
\|V\|_{L_1(Q_n, \mu)}
$$
and
\begin{eqnarray*}
&& \sum_{n = 1}^\infty \|V\|_{\mathcal{B}, Q_n, \mu} \le 2 \sum_{n = 1}^\infty \kappa_n =
2 \sum_{\kappa_n \le 1/n^2} \kappa_n + 2 \sum_{\kappa_n > 1/n^2} \kappa_n \\
&& \le 2 \sum_{n = 1}^\infty \frac1{n^2} + 8 \sum_{n = 1}^\infty \alpha_n +
2 \sum_{n = 1}^\infty (1 + 4 \ln n)\|V\|_{L_1(Q_n, \mu)} \\
&& \le \frac{\pi^2}{3} + 16 + 2 \sum_{n = 1}^\infty (1 + 4 \ln n)
\int_{\eta\left(2\frac{c_1}{c_0}\right)^{\frac{n -1}{\alpha}} \le |x| \le \eta\left(2\frac{c_1}{c_0}\right)^{\frac{n}{\alpha}}} V(x)\, d\mu(x) \\
&& \le \frac{\pi^2}{3} + 16 + A_{13} \sum_{n = 1}^\infty
\int_{\eta\left(2\frac{c_1}{c_0}\right)^{\frac{n -1}{\alpha}} \le |x| \le \eta\left(2\frac{c_1}{c_0}\right)^{\frac{n}{\alpha}}}
V(x) \ln(2 + \ln |x|)\, d\mu(x) \\
&& \le \frac{\pi^2}{3} + 16 + 2 A_{13} \int_{|x| \ge 1} V(x) \ln(2 + \ln |x|)\, d\mu(x) \\
&& \le A_{12} \left(\|V\|_{\mathcal{B}, \mathbb{R}^2\setminus B(0, 1), \mu} +
\int_{|x| \ge 1} V(x) \ln(2 + \ln |x|)\, d\mu(x)\right)
\end{eqnarray*}
(see \eqref{Luxemburgequiv}).
The case of a general $V$ is reduced to
$\|V\|_{(\mathcal{B}, \mathbb{R}^2\setminus B(0, 1), \mu)} = 1$ by the scaling
$V \mapsto t V$, $t > 0$.
\end{proof}

Using Lemmata \ref{avequivB} and \ref{reversetr} (see also Corollary \ref{avequiv}), one gets
\begin{eqnarray*}\label{Solimpl}
&& \sum_{n \ge 1} \mathcal{D}_n = \sum_{n \ge 1} \|V\|^{(\textrm{av})}_{\mathcal{B}, Q_n, \mu}  \\
&& \le \sum_{n = 1}^\infty \|V\|_{\mathcal{B}, Q_n, \mu} +
\sum_{n = 1}^\infty \max\left\{0,\ln\left(\frac72\, \mu(Q_n)\right)\right\}\,
\int_{Q_n} V(x)\, d\mu(x)  \\
&& \le \sum_{n = 1}^\infty \|V\|_{\mathcal{B}, Q_n, \mu} +
\sum_{n = 1}^\infty \max\left\{0,\ln\left(\frac72\, c_1 \eta^\alpha\left(2\frac{c_1}{c_0}\right)^n\right)\right\}\,
\int_{Q_n} V(x)\, d\mu(x)  \\
&& \le \sum_{n = 1}^\infty \|V\|_{\mathcal{B}, Q_n, \mu} +  A_{14}
\sum_{n = 1}^\infty n \int_{\eta\left(2\frac{c_1}{c_0}\right)^{\frac{n -1}{\alpha}} \le |x| \le \eta\left(2\frac{c_1}{c_0}\right)^{\frac{n}{\alpha}}}
V(x)\, d\mu(x)  \\
&& \le \sum_{n = 1}^\infty \|V\|_{\mathcal{B}, Q_n, \mu} +  A_{15}
\sum_{n = 1}^\infty  \int_{\eta\left(2\frac{c_1}{c_0}\right)^{\frac{n -1}{\alpha}} \le |x| \le \eta\left(2\frac{c_1}{c_0}\right)^{\frac{n}{\alpha}}}
V(x)  \ln(1 + |x|)\, d\mu(x)  \\
&& \le A_{16} \Big(\|V\|_{\mathcal{B}, \mathbb{R}^2\setminus B(0, 1), \mu} +
\int_{|x| \ge 1} V(x) \ln(2 + \ln |x|)\, d\mu(x)  \\
&& \ \ \ \ \ +  \int_{|x| \ge 1}
V(x)  \ln(1 + |x|)\, d\mu(x)\Big)  \\
&& \le A_{17} \left(\|V\|_{\mathcal{B}, \mathbb{R}^2, \mu}
+ \int_{\mathbb{R}^2} V(x) \ln(1 + |x|)\, d\mu(x)\right) , \ \ \ \forall V \ge 0 .
\end{eqnarray*}
Hence it follows from \eqref{Dns0} that
\begin{equation}\label{Dnsall}
\sum_{n \in \mathbb{Z}} \mathcal{D}_n  \le A_{18} \left(\|V\|_{\mathcal{B}, \mathbb{R}^2, \mu}
+ \int_{\mathbb{R}^2} V(x) \ln(1 + |x|)\, d\mu(x)\right) .
\end{equation}
Estimate \eqref{maineqncor} now follows from Theorem \ref{mainthm} and \eqref{Gnsest}, \eqref{Dnsall}.

\section{Conluding remarks}\label{remark}
For a sequence of numbers $(a_n)_{n \in \mathbb{Z}}$, let
$$
\left\|(a_n)_{n \in \mathbb{Z}}\right\|_{1,\infty} := \sup_{s > 0}\left(s \;\textrm{card}\{n\;:\;|a_n| > s\}\right) .
$$
It is easy to see that
$$
\left\|(a_n)_{n \in \mathbb{Z}}\right\|_{1,\infty} \le \left\|(a_n)_{n \in \mathbb{Z}}\right\|_1 = \sum_{n \in \mathbb{Z}} |a_n| .
$$
Also,
\begin{equation}\label{sqrtweak}
\sum_{|a_n| > c} \sqrt{|a_n|} \le \frac{2}{\sqrt{c}}\, \left\|(a_n)_{n \in \mathbb{Z}}\right\|_{1,\infty}
\end{equation}
and
\begin{equation}\label{sqrtweak1}
\sum_{\gamma |a_n| > c} \sqrt{\gamma |a_n|} = O(\gamma) \ \mbox{ as } \ \gamma \longrightarrow + \infty \ \ \ \Longleftrightarrow \ \ \
\left\|(a_n)_{n \in \mathbb{Z}}\right\|_{1,\infty} < \infty
\end{equation}
(see \cite[(49), (77), (78)]{Eugene}).

\begin{theorem}\label{measthm3}
Let $V \ge 0$. If $N_-(\mathcal{E}_{\gamma V\mu, \mathbb{R}^2}) = O(\gamma)$ as $\gamma \longrightarrow + \infty$, then
$\left\|(G_n)_{n \in \mathbb{Z}}\right\|_{1,\infty} < \infty$.
\end{theorem}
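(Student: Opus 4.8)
The plan is to argue by contraposition: assuming $\|(G_n)_{n\in\mathbb{Z}}\|_{1,\infty}=\infty$, I would show that $N_-(\mathcal{E}_{\gamma V\mu,\mathbb{R}^2})/\gamma\to\infty$ along a sequence $\gamma\to+\infty$, by producing, for suitable $\gamma$, a large family of linearly independent radial functions on which the form is negative. (One may assume each $G_n<\infty$, which is automatic since $V\in L^1_{\mathrm{loc}}(\mathbb{R}^2,\mu)$ and each $J_n$ is bounded.) The trial functions would be built on the shells $\mathbf{I}_n$: for $n\in\mathbb{Z}$ let $v_n$ be the continuous piecewise-affine function on $\mathbb{R}$ equal to $1$ on $\mathbf{I}_n$, vanishing outside the $h_n$-neighbourhood of $\mathbf{I}_n$ with $h_n:=|\mathbf{I}_n|/8$, and affine in between, and set $w_n(x):=v_n(\ln|x|)$. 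Then $w_n$ is Lipschitz with support in a bounded annulus, so $w_n\in W^1_2(\mathbb{R}^2)\cap L^2(\mathbb{R}^2,V\,d\mu)=\mathrm{Dom}(\mathcal{E}_{\gamma V\mu,\mathbb{R}^2})$. Using the polar-coordinate identities from Section \ref{variational}, namely $\int_{\mathbb{R}^2}|\nabla w_n|^2\,dx=2\pi\int_{\mathbb{R}}|v_n'|^2\,dt$ and $\int_{\mathbb{R}^2}V|w_n|^2\,d\mu=\int_{\mathbb{R}}|v_n|^2\,d\nu(e^t)$ (with $\nu$ as in \eqref{1dmeas}), together with $\int_{\mathbb{R}}|v_n'|^2\,dt=2/h_n=16/|\mathbf{I}_n|$ and $\int_{\mathbb{R}}|v_n|^2\,d\nu(e^t)\ge\nu(J_n)=\int_{|x|\in J_n}V\,d\mu\ge G_n/(2|\mathbf{I}_n|)$ — the last step because $|\ln|x||\le 2|\mathbf{I}_n|$ on $\{|x|\in J_n\}$ (trivially for $n=0$) — I obtain
$$
\mathcal{E}_{\gamma V\mu,\mathbb{R}^2}[w_n]\le\frac{2\pi}{|\mathbf{I}_n|}\Bigl(16-\frac{\gamma G_n}{4\pi}\Bigr)<0\qquad\text{whenever}\qquad\gamma G_n>C_0:=64\pi .
$$

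Next I would turn this into a dimension count. The point of taking $h_n$ a small fixed fraction of $|\mathbf{I}_n|$ is that then $\mathrm{supp}\,w_n$ meets $\mathrm{supp}\,w_m$ only for $|n-m|\le 1$, so the even-indexed $w_n$ have pairwise disjoint supports, and so do the odd-indexed ones. For any finite subfamily with pairwise disjoint supports one has $\mathcal{E}_{\gamma V\mu,\mathbb{R}^2}[\sum_n a_n w_n]=\sum_n|a_n|^2\,\mathcal{E}_{\gamma V\mu,\mathbb{R}^2}[w_n]$; hence, keeping the $w_n$ of one fixed parity with $\gamma G_n>C_0$, their span is a subspace of $\mathrm{Dom}(\mathcal{E}_{\gamma V\mu,\mathbb{R}^2})$ of dimension equal to its cardinality on which the form is negative. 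Choosing the larger of the even and odd parts of $\{n:\gamma G_n>C_0\}$ gives
$$
N_-(\mathcal{E}_{\gamma V\mu,\mathbb{R}^2})\ge\tfrac12\,\mathrm{card}\{n\in\mathbb{Z}:\gamma G_n>C_0\},\qquad\gamma>0 .
$$

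Finally I would feed in the hypothesis. If $N_-(\mathcal{E}_{\gamma V\mu,\mathbb{R}^2})\le C_2\gamma$ for all $\gamma\ge\gamma_0$, then $\mathrm{card}\{n:G_n>C_0/\gamma\}\le 2C_2\gamma$ for $\gamma\ge\gamma_0$; substituting $s=C_0/\gamma$ this reads $s\,\mathrm{card}\{n:G_n>s\}\le 2C_2C_0$ for $0<s\le C_0/\gamma_0$. The instance $\gamma=\gamma_0$ also gives $\mathrm{card}\{n:G_n>C_0/\gamma_0\}<\infty$, hence $G^*:=\sup_n G_n<\infty$; therefore for $s>C_0/\gamma_0$ one has $s\,\mathrm{card}\{n:G_n>s\}\le 2C_2\gamma_0 G^*$ (the left-hand side being $0$ once $s\ge G^*$). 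Combining the two ranges of $s$ yields $\|(G_n)_{n\in\mathbb{Z}}\|_{1,\infty}<\infty$, as required.

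The computational heart — the trial functions and the bound on $\mathcal{E}[w_n]$ — is routine, with all constants absolute. The two places I expect to need care are: the colouring step, forced by the fact that the natural trial functions on adjacent dyadic shells overlap (handled above by keeping the overlap nearest-neighbour, so that two colours suffice); and the final passage to the weak-$\ell^1$ bound, where the regime of large $s$ is not covered by the substitution $\gamma=C_0/s$ and must instead be reduced to the observation that a single instance of the linear bound already forces $\sup_n G_n<\infty$. The latter bookkeeping is the main thing one could overlook, but it is not a genuine difficulty.
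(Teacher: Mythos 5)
Your proof is correct, and it takes essentially the same route as the paper's: the paper's proof is a one-line citation to the trial-function argument in \cite[Theorems 9.1 and 9.2]{Eugene}, which constructs logarithmic-shell piecewise-affine radial test functions, exploits the $2$-coloring of adjacent shells, and deduces $N_-(\mathcal{E}_{\gamma V\mu,\mathbb{R}^2})\ge\tfrac12\,\mathrm{card}\{n:\gamma G_n>C_0\}$ before passing to the weak-$\ell^1$ bound, exactly as you have spelled out. Your write-up simply makes explicit the computation that the reference delegates (with the straightforward substitution of $d\mu$ for the Lebesgue measure and $d\nu(e^t)$ for the one-dimensional potential measure), and your handling of the large-$s$ regime via finiteness of $\sup_n G_n$ matches what is needed.
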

\begin{proof}
This follows by replacing the Lebesgue measure with $\mu$ in the proofs of \cite[Theorems 9.1 and 9.2]{Eugene}.
\end{proof}
The above theorem and \eqref{sqrtweak} show that the term $\sum_{G_n > 1/4}  \sqrt{G_n}$ in \eqref{maineqn} is optimal
in a sense. Although the same cannot be said about the term $\sum_{\mathcal{D}_n > c} \mathcal{D}_n$, the following
theorem shows that it is optimal in the class of Orlicz norms. More precisely, no estimate of the type
\begin{equation}\label{type}
N_-(\mathcal{E}_{V\mu, \mathbb{R}^2}) \le \textrm{const} + \int_{\mathbb{R}^2}V(x)W(x)\,d\mu(x) + \textrm{const}\|V\|_{\Psi, \mathbb{R}^2, \mu}
\end{equation}
can hold with a norm $\|V\|_{\Psi, \mathbb{R}^2, \mu}$ weaker than $\|V\|_{\mathcal{B}, \mathbb{R}^2, \mu}$ provided the weight function $W$ is bounded in a neighbourhood of at least one point in the support of $\mu$.
\begin{theorem}\label{notype}{\rm(cf. \cite[Theorem 9.4]{Eugene})
 Let $W \ge 0$ be bounded in a neighbourhood of at least one point in the support of $\mu$ and let $\Psi$ be an N-function such that
$$
\underset{s \longrightarrow \infty}\lim\frac{\Psi(s)}{\mathcal{B}(s)} = 0.
$$
Then there exists a compactly supported $V\ge 0$ such that
$$
\int_{\mathbb{R}^2}V(x)W(x)\,d\mu(x) + \|V\|_{\Psi, \mathbb{R}^2, \mu} < \infty
$$ and $N_-(\mathcal{E}_{V\mu, \mathbb{R}^2}) = \infty$.
}
\end{theorem}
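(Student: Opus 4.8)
The plan is to mimic the construction used for the Lebesgue measure in \cite[Theorem 9.4]{Eugene}: I will build $V=\sum_{k\ge1}V_k$ as a series of nonnegative bumps concentrated near a point $x_0\in\operatorname{supp}\mu$ in a neighbourhood of which $W\le C_W$, with pairwise disjoint supports, and for each $k$ I will exhibit a test function $u_k$ supported in the $k$-th bump with $\mathcal{E}_{V\mu,\mathbb{R}^2}[u_k]<0$. Since the supports are disjoint, the cross terms vanish and $\mathcal{E}_{V\mu,\mathbb{R}^2}\bigl[\sum_{k\le N}c_ku_k\bigr]=\sum_{k\le N}|c_k|^2\,\mathcal{E}_{V\mu,\mathbb{R}^2}[u_k]<0$ on $\operatorname{span}\{u_1,\dots,u_N\}\setminus\{0\}$, so $N_-(\mathcal{E}_{V\mu,\mathbb{R}^2})\ge N$ for every $N$; the whole problem thus reduces to choosing the bump parameters so that the two norms on the left-hand side of the asserted inequality stay finite.

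First I would set up the geometry. Since $x_0\in\operatorname{supp}\mu$ and $\mu$ is Ahlfors $\alpha$-regular with $\alpha>0$, one has $\mu(\{z\})=0$ for every $z$, and $\operatorname{supp}\mu\cap B(x_0,r_0)$ is infinite (otherwise $\mu(B(x_0,r_0))$ would be a finite sum of zeros), hence has an accumulation point in $\operatorname{supp}\mu$; consequently there are points $y_k\in\operatorname{supp}\mu\cap B(x_0,r_0)$ and radii $t_k^{(0)}>0$ with the closed balls $\overline{B(y_k,t_k^{(0)})}$ pairwise disjoint and contained in $B(x_0,r_0)$, and in the construction I may replace $t_k^{(0)}$ by anything smaller. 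On the $k$-th ball I take the logarithmic cut-off $u_k$, equal to $1$ on $\overline{B(y_k,\sigma_k)}$, equal to $0$ off $B(y_k,t_k)$, and $u_k(x)=\ln(t_k/|x-y_k|)/\ln(t_k/\sigma_k)$ for $\sigma_k\le|x-y_k|\le t_k$, where $0<\sigma_k<t_k\le t_k^{(0)}$ are still to be fixed; then $\int_{\mathbb{R}^2}|\nabla u_k|^2\,dx=2\pi/\ln(t_k/\sigma_k)$. I put $V_k:=b_k/\mu(\overline{B(y_k,\sigma_k)})$ on $\overline{B(y_k,\sigma_k)}$ and $V_k:=0$ elsewhere, so that, as $u_k\equiv1$ on $\operatorname{supp}V_k$, $\int V_k|u_k|^2\,d\mu=\int V_k\,d\mu=b_k$. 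Choosing $\ln(t_k/\sigma_k)=4\pi/b_k$, i.e.\ $\sigma_k:=t_ke^{-4\pi/b_k}$, yields $\mathcal{E}_{V\mu,\mathbb{R}^2}[u_k]=\tfrac12b_k-b_k=-\tfrac12b_k<0$ for every $b_k>0$ and every $t_k\le t_k^{(0)}$; hence $N_-(\mathcal{E}_{V\mu,\mathbb{R}^2})=\infty$. Since $W\le C_W$ near $x_0$, $\int_{\mathbb{R}^2}VW\,d\mu\le C_W\sum_kb_k$, so the choice $b_k:=2^{-k}$ already makes the weighted integral finite, $V$ compactly supported, and $V\in L^1(\mathbb{R}^2,\mu)$.

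The remaining freedom is the choice of $t_k$ (equivalently of $\sigma_k$, equivalently of the bump height $h_k:=b_k/\mu(\overline{B(y_k,\sigma_k)})$), which I will use to keep $\|V\|_{\Psi,\mathbb{R}^2,\mu}$ finite. Writing $\eta(s):=\Psi(s)/\mathcal{B}(s)$, so $\eta(s)\to0$ as $s\to\infty$, and using that the supports are disjoint together with $h_k\,\mu(\overline{B(y_k,\sigma_k)})=b_k$,
\[
\int_{\mathbb{R}^2}\Psi(V)\,d\mu=\sum_k\Psi(h_k)\,\mu\bigl(\overline{B(y_k,\sigma_k)}\bigr)=\sum_k b_k\,\frac{\Psi(h_k)}{h_k}=\sum_k b_k\,\eta(h_k)\,\frac{\mathcal{B}(h_k)}{h_k},
\]
and $\mathcal{B}(h_k)/h_k\le1+2\ln_+h_k$ by Lemma~\ref{elem}. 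With $\mu(\overline{B(y_k,\sigma_k)})\asymp\sigma_k^\alpha=t_k^\alpha e^{-4\pi\alpha/b_k}$ one has $h_k\asymp b_kt_k^{-\alpha}e^{4\pi\alpha/b_k}$, so $b_k\ln h_k\le4\pi\alpha+\alpha\,b_k\ln(1/t_k)+O(b_k)$; thus, as long as $b_k\ln(1/t_k)$ stays bounded, the factor $b_k\,\mathcal{B}(h_k)/h_k$ is bounded uniformly in $k$, while $h_k\to\infty$ can still be pushed past any prescribed threshold by shrinking $t_k$. Concretely, let $S_k<\infty$ (which we may take $\ge1$) satisfy $\eta(s)\le2^{-k}$ for all $s\ge S_k$, and set
\[
b_k:=\min\Bigl\{2^{-k},\ \frac{2\pi\alpha}{\ln(e+S_k)},\ \frac{1}{\ln(e+1/t_k^{(0)})}\Bigr\},\qquad t_k:=\tfrac12\min\Bigl\{t_k^{(0)},\ \bigl(b_kS_k/c_1\bigr)^{1/\alpha}\Bigr\}.
\]
Then $b_k\le2\pi\alpha/\ln(e+S_k)$ forces $e^{4\pi\alpha/b_k}\ge(e+S_k)^2$, whence $h_k\ge b_kc_1^{-1}t_k^{-\alpha}e^{4\pi\alpha/b_k}\ge2^\alpha S_k\ge S_k$ and so $\eta(h_k)\le2^{-k}$, while the other two constraints keep $b_k\ln(1/t_k)$ bounded. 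Therefore $\int\Psi(V)\,d\mu\le C\sum_k\eta(h_k)\le C\sum_k2^{-k}<\infty$, and by \eqref{LuxNormPre} and \eqref{Luxemburgequiv}, $\|V\|_{\Psi,\mathbb{R}^2,\mu}<\infty$.

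The step I expect to be the crux is this last balancing act: each bump must carry enough total mass $b_k$ relative to the width $\ln(t_k/\sigma_k)=4\pi/b_k$ of its logarithmic collar to create a bound state, yet be concentrated enough --- small $\mu$-measure of its support, hence large height $h_k$ --- that $\eta(h_k)$ is summably small, and one has no quantitative control on the rate at which $\eta\to0$, so $S_k$ may grow arbitrarily fast. The resolution is to let $b_k\to0$ fast enough relative to $\ln S_k$ that the factor $e^{4\pi\alpha/b_k}$ alone carries $h_k$ past $S_k$, which automatically keeps $b_k\ln h_k$, hence the $L\log L$-type weight $\mathcal{B}(h_k)/h_k$, bounded. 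As a consistency check with Corollary~\ref{maincor}, the same computation with $\Psi=\mathcal{B}$ gives $b_k\,\mathcal{B}(h_k)/h_k\ge4\pi\alpha$ for every $k$, so that $\|V\|_{\mathcal{B},\mathbb{R}^2,\mu}=\infty$, as it must be. The remaining points --- $u_k\in\operatorname{Dom}(\mathcal{E}_{V\mu,\mathbb{R}^2})$ and the vanishing of the cross terms by disjointness of supports --- are routine.
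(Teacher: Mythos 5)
Your proposal is correct and follows essentially the same approach as the paper: both constructions place pairwise disjoint balls centred at points of $\operatorname{supp}\mu$ near a point where $W$ is bounded, take $V$ constant on small sub-balls, use logarithmic cut-off test functions, and control $\int\Psi(V)\,d\mu$ by pushing the bump height past a threshold where $\Psi/\mathcal{B}$ is small while keeping the $L\log L$-type factor $b_k\ln h_k$ bounded. The only differences are cosmetic reparametrizations: the paper fixes the collar ratio of the cut-off to be $r_k$ and tunes a single amplitude constant $A_{19}$, whereas you prescribe the bump charges $b_k$ directly and tune the collar width $\ln(t_k/\sigma_k)=4\pi/b_k$; and the paper works with the non-increasing envelope $\beta(s)=\sup_{t\ge s}\Psi(t)/\mathcal{B}(t)$ rather than your explicit thresholds $S_k$.
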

\begin{proof}
Shifting the independent variable if necessary, we can assume that $0 \in$ supp $\mu$ and $W$ is bounded
in a neighborhood of $0$. Let $r_0 > 0$ be such that $W$ is bounded in the open ball
$B(0, r_0)$.

Let
$$
\beta(s) := \sup_{t \ge s}\, \frac{\Psi(t)}{\mathcal{B}(t)}\, .
$$
Then $\beta$ is a non-increasing function, $\beta(s) \to 0$ as $s \to \infty$, and
$\Psi(s) \le \beta(s) \mathcal{B}(s)$.
Since $\Psi$ is an $N$-function, $\Psi(s)/s \to \infty$ as $s \to \infty$
(see section \ref{App}). Hence there exists $s_0 \ge e^{\frac{1}{\alpha}} > 1$ such that $\Psi(s) \ge s$ and
$\beta(s) \le 1$ for $s \ge s^{\alpha}_0$.
Choose $\rho_k \in (0, 1/s_0)$ in such a way that
$$
\sum_{k = 1}^\infty \beta\left(\frac1{\rho^{\alpha}_k}\right) < \infty .
$$
It follows from \eqref{Ahlfors} that $\forall r > 0$, the disk $B(0, r)$ contains points of the support of $\mu$ different from $0$. Let
$x^{(1)}\in \mbox{supp}\,\mu \setminus\{0\}$ be such that
$$
|x^{(1)}| < \min\left\{\frac{2}{3}\, r_0, 2\rho_1\right\} .
$$
One can choose $x^{(k)},\,k\in\mathbb{N}$ inductively as follows: suppose $x^{(1)}, ... , x^{(k)} \in$ supp$\,\mu \setminus\{0\}$ have been chosen.
Take $x^{(k + 1)} \in$ supp $\mu\setminus\{0\}$ such that
$$
|x^{(k+1)}| < \min\left\{\frac{1}{3}|x^{(k)}|, 2\rho_{k+1}\right\}.
$$
Since $|x^{(k+1)}| < \frac{1}{3}|x^{(k)}|$, it is easy to see that the open disks $B(x^{(k)}, \frac{1}{2}|x^{(k)}|)$, $k \in \mathbb{N}$ lie in $B(0, r_0)$
and are pairwise disjoint. Let $r_k := \frac{1}{2}|x^{(k)}|$. Then $r_k < \rho_k,\,k\in\mathbb{N}$. For a constant $A_{19} > 0$ to be specified later, let
\begin{eqnarray*}
& t_k := \frac{A_{19}}{\ln\frac1{r_k}}\, r_k^{-2\alpha} & \\ \\
& V(x) := \left\{\begin{array}{ll}
  t_k ,   &  x \in B\left(x^{(k)}, r_k^2\right) , \  k \in \mathbb{N}, \\
    0 ,  &   \mbox{otherwise.}
\end{array}\right. &
\end{eqnarray*}
Since the function $r \mapsto r^{\alpha}\ln\frac1{r}$ has maximum equal to $\frac{1}{\alpha e} $, one can choose  $A_{19} > 0$ such that
$A_{19}\alpha e > 1$ and
$$
t_k = \frac{A_{19}}{\ln\frac1{r_k}}\, r_k^{-2\alpha} = \frac{A_{19}}{r_k^{\alpha} \ln\frac1{r_k}}\, r_k^{-\alpha} > \frac1{r^{\alpha}_k}
> \frac1{\rho^{\alpha}_k} > s_0^\alpha \ge e .
$$
Then
\begin{eqnarray*}
&& \int_{\mathbb{R}^2}\Psi(V(x))\,d\mu(x) = \sum_{k= 1}^{\infty}\Psi(t_k)\mu\left(B(x^{(k)}, r^2_k)\right) \le
\sum_{k= 1}^{\infty}\Psi(t_k)c_1 r^{2\alpha}_{k}\\
&& \le c_1\sum_{k =1}^{\infty}r^{2\alpha}_k\beta(t_k)\mathcal{B}(t_k)
\le c_1\sum_{k =1}^{\infty}r^{2\alpha}_k\beta(t_k)(1 + t_k)\ln(1 + t_k) \\
&& < 4c_1\sum_{k =1}^{\infty}r^{2\alpha}_k\beta(t_k)t_k\ln t_k
= 4c_1\sum_{k =1}^{\infty}\beta(t_k)\frac{A_{19}}{\ln\frac{1}{r_k}}\ln\frac{A_{19}}{r^{2\alpha}_k\ln\frac{1}{r_k}} \\
&& \le 4c_1A_{19}\sum_{k =1}^{\infty}\beta\left(\frac{1}{r^{\alpha}_k}\right)\frac{1}{\ln\frac{1}{r_k}}\ln\frac{A_{19} \alpha}{r_k^{2\alpha}}\\
&& \le \textrm{const} \sum_{k = 1}^{\infty} \beta\left(\frac{1}{r^{\alpha}_k}\right) \le
\textrm{const} \sum_{k = 1}^{\infty} \beta\left(\frac{1}{\rho^{\alpha}_k}\right) < \infty.
\end{eqnarray*}
Thus $\|V\|_{\Psi, \mathbb{R}^2, \mu} < \infty$ (see \eqref{LuxNormPre} and \eqref{Luxemburgequiv}).
Since $t_k > \frac{1}{r^{\alpha}_k} > s^{\alpha}_0$, one has $t_k \le \Psi(t_k)$ and
$$
\int_{\mathbb{R}^2}V(x)\,d\mu(x) \le \int_{\mathbb{R}^2}\Psi(V(x))\,d\mu(x) < \infty.
$$
Since $W$ is bounded in $B(0, r_0)$,
$$
\int_{\mathbb{R}^2}V(x)W(x)\,d\mu(x) < \infty\,.
$$
Let
$$
w_k(x) := \left\{\begin{array}{cl}
  1 ,   & \  |x - x^{(k)}| \le r_k^2 , \\ \\
    \frac{\ln (r_k/|x - x^{(k)}|)}{\ln(1/r_k)} ,  & \ r_k^2 <  |x - x^{(k)}| \le r_k  , \\ \\
 0 , & \  |x - x^{(k)}| > r_k
\end{array}\right.
$$
(cf. \cite{Grig}). Then
$$
\int_{\mathbb{R}^2} |\nabla w_k(x)|^2\, dx = \frac{2\pi}{\ln(1/r_k)}\,.
$$
Further,
\begin{eqnarray*}
\int_{\mathbb{R}^2} V(x) |w_k(x)|^2\, d\mu(x) &\ge& \int_{B\left(x^{(k)}, r_k^2\right)} V(x)\, d\mu(x) =
t_k \mu\left(B\left(x^{(k)}, r_k^2\right)\right)\\ &\ge& t_k c_0 r^{2\alpha}_k = c_0\frac{A_{19}}{\ln \frac{1}{r_k}}\, .
\end{eqnarray*}
Hence for any $A_{19} > \frac{2\pi}{c_0}$,
$$
\mathcal{E}_{V\mu, \mathbb{R}^2}[w_k] < 0 , \ \ \ \forall k \in \mathbb{N}
$$
and $N_- (\mathcal{E}_{V\mu, \mathbb{R}^2}) = \infty$.

\end{proof}

\section{Appendix:  \ Proofs of \eqref{*}, \eqref{**} and \eqref{***}}\label{APII}\setcounter{equation}{0}
\renewcommand{\theequation}{A.\arabic{equation}}
Let $\mathcal{B}(s) = (1 + s)\ln(1 + s) - s = \frac{1}{t}$, then $s = \mathcal{B}^{-1}\left(\frac{1}{t}\right)$. For small values of $s$ (large values of $t$), using
$$
\ln(1 + s) = s - \frac{s^2}{2} + \frac{s^3}{3} + O\left(s^4\right),
$$ we have
$$
(1 + s)\ln(1 + s) - s = \frac{s^2}{2} + O\left(s^3\right) = \frac{1}{t}.
$$ One can write this in the form
\begin{eqnarray*}
\frac{s^2}{2} + s^2g(s) &=& \frac{1}{t},\;\;\;\;g(0)= 0,\\
\frac{s^2}{2}\left(1 + 2g(s)\right) &=& \frac{1}{t},\\
s\left(1 + h(s)\right) &=& \sqrt{\frac{2}{t}},\;\;\;\;h(0) = 0,
\end{eqnarray*} where $g$ and $h$ are $C^{\infty}$ smooth functions in a neighbourhood of $0$.
Let $f(s) = s\left(1 + h(s)\right)$. Then $f(0)= 0, f'(0) = 1$ and $(f^{-1})'(0) = 1$, which means
that both $f$ and $f^{-1}$ are invertible in a neighbourhood of $0$, and
$$
s = f^{-1}\left(\sqrt{\frac{2}{t}}\right) = \sqrt{\frac{2}{t}} + O\left(\frac{1}{t}\right).
$$
Thus
$$
\mathcal{B}^{-1}\left(\frac{1}{t}\right) = \sqrt{\frac{2}{t}}\left(1 + o(1)\right)  \ \mbox{ as } \ t\longrightarrow\infty
$$
and
\begin{equation}\label{larget}
t\mathcal{B}^{-1}\left(\frac{1}{t}\right) = \sqrt{2t}\left( 1 + o(1)\right) \ \mbox{ as } \ t\longrightarrow\infty.
\end{equation}
For large values of $s$ (small values of $t$), let $\rho = 1 + s$ and $r = \frac{1}{t}$, then
$$
\rho\ln\rho - \rho + 1 = r.
$$ Let $\rho = e^z$, then
\begin{equation}\label{asymp}
ze^z - r - e^z + 1 = 0.
\end{equation} This implies
\begin{eqnarray*}
(z -1)e^z &=& r -1, \\ (z -1 )e^{z -1} &=& \frac{r - 1}{e}.
\end{eqnarray*}
Let $w := z - 1 \,\,\,\,\,\, v:=  \frac{r - 1}{e}$. Then
\begin{equation}\label{asymp1}
we^w = v.
\end{equation}
The solution of \eqref{asymp1} is given by
$$
w = \ln v - \ln\ln v + \frac{\ln\ln v}{\ln v} + O\left(\left(\frac{\ln\ln v}{\ln v}\right)^2\right)
$$ (see (2.4.10) and the formula following (2.4.3) in \cite{DEB}). So
\begin{eqnarray*}
z &=& 1 + \ln\frac{r - 1}{e} - \ln\ln\frac{r - 1}{e} + \frac{\ln\ln \frac{r- 1}{e}}{\ln \frac{r-1}{e}}\\
&& + \, O\left(\left(\frac{\ln\ln \frac{r- 1}{e}}{\ln \frac{r-1}{e}}\right)^2\right).
\end{eqnarray*}
Since
\begin{eqnarray*}
&&\ln (r-1) = \ln r + O\left(\frac{1}{r}\right),\\&&\ln\left(\ln (r -1) - 1\right) = \ln\ln r + O\left(\frac{1}{\ln r}\right),
\end{eqnarray*}
we get
\begin{eqnarray*}
 z &=& \ln r - \ln\ln r + \frac{\ln\ln r}{\ln r} + O\left(\frac{1}{\ln r}\right)\\ &=&\ln\frac{1}{t} - \ln\ln\frac{1}{t} + \frac{\ln\ln\frac{1}{t}}{\ln\frac{1}{t}} + O\left(\frac{1}{\ln\frac{1}{t}}\right).
\end{eqnarray*}
This implies
$$
\rho = e^z =  \frac{1}{t\ln\frac{1}{t}}\left( 1 +  \frac{\ln\ln \frac{1}{t}}{\ln\frac{1}{t}} +  O\left(\frac{1}{\ln\frac{1}{t}}\right)\right).
$$
Hence
$$
t\mathcal{B}^{-1}\left(\frac{1}{t}\right) = \frac{1}{\ln\frac{1}{t}}\left(1 +  \frac{\ln\ln \frac{1}{t}}{\ln\frac{1}{t}} +  O\left(\frac{1}{\ln\frac{1}{t}}\right)\right)
$$
implying
\begin{equation}\label{smallt}
t\mathcal{B}^{-1}\left(\frac{1}{t}\right) = \frac{1}{\ln\frac{1}{t}}\left( 1 + o(1)\right) \ \mbox{ as } \ t\longrightarrow 0.
\end{equation}
Let
\begin{equation}\label{taut}
\tau := t\mathcal{B}^{-1}\left(\frac{1}{t}\right) .
\end{equation}
Then
\begin{equation}\label{t1}
\ln\frac{1}{t} = \frac{1 + o(1)}{\tau}.
\end{equation}
From
$$
\tau = \frac{1}{\ln\frac{1}{t}}\left(1 +  \frac{\ln\ln \frac{1}{t}}{\ln\frac{1}{t}} +  O\left(\frac{1}{\ln\frac{1}{t}}\right)\right) ,
$$
we get
\begin{equation}\label{t2}
\ln\frac{1}{t} = \frac{1 +  \frac{\ln\ln \frac{1}{t}}{\ln\frac{1}{t}} +  O\left(\frac{1}{\ln\frac{1}{t}}\right)}{\tau}.
\end{equation}
Now \eqref{t1} implies
\begin{eqnarray*}
\ln\frac{1}{t} &=& \frac{1 +  \frac{\ln\frac{1 + o(1)}{\tau}}{1 + o(1)}\tau + O\left( \frac{\tau}{1 + o(1)}\right)}{\tau} \\&=& \frac{1 +  (1 + o(1))\tau \ln \frac{1}{\tau} + O(\tau)}{\tau}\,.
\end{eqnarray*}
Substituting this into \eqref{t2}, one gets
\begin{eqnarray*}
\ln\frac{1}{t} &=& \frac{1 + \frac{\ln\frac{1 + (1 + o(1))\tau\ln\frac{1}{\tau} + O(\tau)}{\tau}}{1 + (1 + o(1))\tau\ln \frac{1}{\tau} + O(\tau)}\tau + O(\tau)}{\tau}\\ &=& \frac{1}{\tau} -  \ln\tau + O(1).
\end{eqnarray*}
Hence
\begin{equation}\label{large}
t = \tau e^{-\frac{1}{\tau}}e^{O(1)}  \ \mbox{ as } \ \tau \longrightarrow 0.
\end{equation}

\section*{Acknowlegments}
The first author is grateful to the Commonwealth Scholarship Commission in the UK, grant UGCA-2013-138, for the funding when he was a PhD student at King's College London.


\begin{thebibliography}{1}
\bibitem{Ad} R.A. Adams,
{\em Sobolev Spaces.} Academic Press, New York, 1975.


\bibitem{BE} A.A. Balinsky and W.D. Evans, {\em Spectral Analysis of Relativistic Operators.} Imperial College Press, London, 2011.

\bibitem{BEL} A.A. Balinsky, W.D. Evans, and R.T. Lewis, {\em The Analysis and Geometry of Hardy's Inequality.} Universitext,
Springer, Cham etc., 2015.

\bibitem{BerShu} F.A. Berezin and M.A. Shubin,
{\em The Schr\"odinger Equation.} Kluwer, Dordrecht etc., 1991.


\bibitem{BL} M.Sh. Birman and  A. Laptev,
The negative discrete spectrum of a two-dimensional Schr\"odinger operator,
{\it Commun. Pure Appl. Math.} \textbf{49}, 9 (1996), 967--997.

\bibitem{BirSol} M.Sh. Birman and M.Z. Solomyak,
{\em Spectral Theory of Self-Adjoint Operators in Hilbert Space.} Kluwer, Dordrecht etc., 1987.

\bibitem{CWH} S.N. Chandler-Wilde and D.P. Hewett, Well-posed PDE and integral equation formulations for scattering by fractal screens,
{\it SIAM J. Math. Anal.} \textbf{50}, 1 (2018), 677--717.

\bibitem{CWHM} S.N. Chandler-Wilde, D.P. Hewett, and A. Moiola, Sobolev spaces on non-Lipschitz subsets of $\mathbb{R}^n$
with application to boundary integral equations on fractal screens, {\it Integral Equations Operator Theory} \textbf{87}, 2 (2017), 179--224.

\bibitem{CWHMB} S.N. Chandler-Wilde, D.P. Hewett, A. Moiola, and J. Besson, Boundary element methods for acoustic scattering by fractal screens,
(arXiv:1909.05547).

\bibitem{DL2} R. Dautray and J.-L. Lions,
{\em Mathematical Analysis and Numerical Methods for Science and Technology.} Vol. 2,  Springer-Verlag, Berlin, 1988.

\bibitem{DL3} R. Dautray and J.-L. Lions,
{\em Mathematical Analysis and Numerical Methods for Science and Technology.} Vol. 3,  Springer-Verlag, Berlin, 1990.


\bibitem{Dav} G. David and S. Semmes, {\em Fractured Fractals and Broken Dreams.} Clarendon Press, Oxford, 1997.

\bibitem{DEB} N.G. De Bruijn, {\em Asymptotic Methods in Analysis.} North-Holland Publishing Company, Amsterdam,  1970.

\bibitem{GSK} B. Ghosh, S.N. Sinha, and M.V. Kartikeyan, {\em Fractal Apertures in Waveguides, Conducting Screens and Cavities.}
Springer, Berlin--Heidelberg, 2014.

\bibitem{Grig} A. Grigor'yan and N. Nadirashvili, Negative eigenvalues of two-dimensional
Schr\"odinger operators, {\it Arch. Ration. Mech. Anal.} \textbf{217} (2015), 975--1028.

\bibitem{Guz} M. de Guzm\'an,
{\em Differentiation of Integrals in $\mathbb{R}^n$. }
Springer, Berlin--Heidelberg--New York, 1975.

\bibitem{HB} D.P. Hewett and J. Bannister, Acoustic scattering by impedance screens with fractal
boundary, {\it Proc. 14th Int. Conf. on Mathematical and Numerical Aspects of Wave Propagation}, Vienna, Austria, 2019, 80--81.

\bibitem{HUT} J.E. Hutchinson, Fractals and self similarity, {\it Indiana University Mathematics Journal} \textbf{30} (1981), 713--747.

\bibitem{Kar} M. Karuhanga, On estimates for the number of negative eigenvalues of two-dimensional Schr\"odinger operators with potentials supported by Lipschitz curves, {\it J. Math. Appl.}, \textbf{456}, 2 (2017), 1365--1379.

\bibitem{MK} M. Karuhanga, Estimates for the number of eigenvalues ot two-dimensional Schr\"odinger operators lying below the essential spectrum,  arXiv:1609.08098, 2016.

\bibitem{KS} M. Karuhanga and E. Shargorodsky, Counting negative eigenvalues of one-dimensional Schr\"odinger operators with singular potentials,
{\it Gulf J. Math.} \textbf{7}, 2 (2019), 5--15.

\bibitem{KMW} N.N. Khuri, A. Martin and T.T. Wu, Bound states in $n$ dimensions
(especially $n = 1$ and $n = 2$), {\it Few Body Syst.} \textbf{31} (2002), 83--89.


\bibitem{KR} M.A. Krasnosel'skii and Ya.B. Rutickii,
{\em Convex Functions and Orlicz Spaces.} P. Noordhoff,
Groningen, 1961.

\bibitem{LN} A. Laptev and Yu. Netrusov,
On the negative eigenvalues of a class of Schr\"odinger operators. In:
V. Buslaev (ed.) et al., {\em Differential Operators and Spectral Theory.
M. Sh. Birman's 70th anniversary collection.} Providence, RI: American Mathematical Society. Transl., Ser. 2, Am. Math. Soc. \textbf{189(41)} (1999), 173--186.

\bibitem{LapSolo} A. Laptev and M. Solomyak,  On spectral estimates for two-dimensional Schr\"odinger operators,
{\it J. Spectr. Theory} \textbf{3}, 4 (2013), 505--515.

\bibitem{Maz} V.G. Maz'ya,
{\em Sobolev Spaces. With Applications to Elliptic Partial Differential Equations.}
Springer, Berlin--Heidelberg, 2011.

\bibitem{MV} S. Molchanov and B. Vainberg,
On negative eigenvalues of low-dimensional Schr\"odinger operators,
(arXiv:1105.0937).

\bibitem{MV1} S. Molchanov and B. Vainberg, Bargmann type estimates of the counting function
for general Schr\"odinger operators, {\it J. Math. Sci.} \textbf{184}, 4 (2012), 457--508.

\bibitem{FM} F. Morgan {\em Geometric Measure Theory}. 2nd ed., Academic Press, New York, 1995.

\bibitem{MW} A. J. Mulholland and A. J. Walker, Piezoelectric ultrasonic transducers with fractal geometry, {\it Fractals} \textbf{19} (2011), 469--479.

\bibitem{Ne1} K. Nesvit, Scattering and diffraction of TM modes on a grating consisting of a finite number of pre-fractal thin impedance strips,
{\it 2013 European Microwave Conference}, 1143--1146

\bibitem{Ne2} K. Nesvit, Discrete mathematical model of wave diffraction on pre-fractal impedance strips. TM mode case,
{\it AIP Conference Proceedings} \textbf{1561}, 219 (2013), 219--223

\bibitem{Ne3} K. Nesvit, Scattering and propagation of the TE/TM waves on pre-fractal impedance grating in numerical results,
{\it The 8th European Conference on Antennas and Propagation (EuCAP 2014)}, 2773--2777.

\bibitem{RR} M.M. Rao and Z.D. Ren,
{\em Theory of Orlicz Spaces.}
Marcel Dekker, New York, 1991.

\bibitem{RSIV} A. Reed and B. Simon,
\textit{Methods of Modern Mathematical Physics. IV. Analysis of operators}, Academic Press, New York, 1978.

\bibitem{Roz} G.V. Rozemblum, The distribution of the discrete spectrum for singular differential operators, {\it Dokl. Akad. Nauk SSSR}
\textbf{202} (1972), 1012--1015.

\bibitem{Eugene1} E. Shargorodsky,
An estimate for the Morse index of a Stokes wave, {\it Arch. Rational Mech. Anal.} \textbf{209}, 1 (2013), 41--59.

\bibitem{Eugene} E. Shargorodsky,
On negative eigenvalues of two-dimensional Schr\"odingers operators, Proceedings LMS, \textbf{108}, 2 (2014), 441--483.

\bibitem{Sol} M. Solomyak,
Piecewise-polynomial approximation of functions from $H\sp \ell((0,1)\sp d)$, $2\ell=d$, and applications to the spectral theory of the Schr\"odinger operator,
{\it Isr. J. Math.}  \textbf{86}, 1-3 (1994), 253--275.

\bibitem{Sol2} M. Solomyak,
On a class of spectral problems on the half-line and their applications to multi-dimensional problems,
{\it J. Spectr. Theory} \textbf{3}, 2 (2013), 215--235.

\bibitem{Stein} E.M. Stein, {\em Singular Integrals and Differentiability Properties of Functions.} Princeton University Press, New Jersey, 1970.


\bibitem{STR} R.S. Strichartz, Fractals in the large, {\it Can. J. Math} \textbf{50}, 3 (1996), 638--65 .

\bibitem{strok} D.W. Stroock, \textit{A Concise Introduction to the Theory of Integration.} Birkh\"auser, Berlin, 1994.

\bibitem{WG} D.H. Werner and S. Ganguly, An overview of fractal antenna engineering research, {\it IEEE Antennas Propag. Mag.} \textbf{45}
(2003), 38--57.

\end{thebibliography}
\end{document}